\documentclass[a4paper,10pt]{amsart}
\usepackage[utf8]{inputenc}
\usepackage{amsthm}
\usepackage{amsmath}
\usepackage{bm}
\usepackage{enumitem}
\usepackage{amsfonts}
\usepackage{amssymb}
\usepackage{mathtools}
\usepackage{appendix}
\usepackage{mathrsfs}
\usepackage{setspace}
\usepackage{comment}
\usepackage{xcolor}
\usepackage{todonotes}
\usepackage{thmtools} 
\usepackage[foot]{amsaddr}
\RequirePackage[a4paper,top=2.54cm,bottom=2.54cm,left=1.90cm,right=1.90cm,%
                headsep=1em,includehead,includefoot]{geometry}
\usepackage[pdfdisplaydoctitle,colorlinks,breaklinks,urlcolor=blue,linkcolor=blue,citecolor=blue]{hyperref} 
\usepackage[nameinlink,capitalise]{cleveref}


\newcommand{\C}{\mathbb{C}}

\renewcommand{\H}{\mathcal{H}}

\newcommand{\LL}{\mathcal{L}}

\newcommand{\N}{\mathbb{N}}

\newcommand{\R}{\mathbb{R}}

\newcommand{\T}{\mathbb{T}}

\newcommand{\Z}{\mathbb{Z}}


\DeclareMathOperator{\grad}{\nabla}

\renewcommand{\epsilon}{\varepsilon}
\renewcommand{\setminus}{\smallsetminus}
\newcommand{\eps}{\epsilon}

\newcommand{\one}{\bm{1}}


\newcommand{\norm}[1]{\left\|#1\right\|}

\newcommand{\expt}[2][]{\mathbb{E}_{#1}\left[#2\right]}
\newcommand{\mB}[1]{\overline{B^{\eps}_{#1}}}
\newcommand{\oB}[1]{{{B}^{\eps}_{#1}}^{\prime}}
\newcommand{\mBcomp}[2]{\overline{B^{#2,\eps}_{#1}}}
\newcommand{\oBcomp}[2]{{{B}^{#2,\eps}_{#1}}^{\prime}}
\newcommand{\mA}[1]{\overline{A^{\eps}_{#1}}}
\newcommand{\oA}[1]{{{A}^{\eps}_{#1}}^{\prime}}
\newcommand{\mAcomp}[2]{\overline{A^{#2,\eps}_{#1}}}
\newcommand{\oAcomp}[2]{{{A}^{#2,\eps}_{#1}}^{\prime}}
\newtheorem{theorem}{Theorem}[section]
\newtheorem{definition}[theorem]{Definition}
\newtheorem{hypothesis}[theorem]{Hypothesis}
\newtheorem{corollary}[theorem]{Corollary}
\newtheorem{lemma}[theorem]{Lemma}
\newtheorem{proposition}[theorem]{Proposition}

\theoremstyle{remark}
\newtheorem{remark}[theorem]{Remark}

\numberwithin{equation}{section}
\newcommand{\sumkj}{\sum_{\substack{k\in \Z^{3,N}_0\\ j\in \{1,2\}}}}
\newcommand{\sumkmeanj}{\sum_{\substack{k\in \Z^{3,N}_0\\k_3=0,\quad j\in \{1,2\}}}}
\newcommand{\sumkmeanjuno}{\sum_{\substack{k\in \Z^{3,N}_0\\k_3=0,\quad j=1}}}
\newcommand{\sumkmeanjdue}{\sum_{\substack{k\in \Z^{3,N}_0\\k_3=0,\quad j=2}}}
\newcommand{\sumkmeancov}{\sum_{\substack{k\in \Z^{3,N}_0\\ k_3=0}}}
\newcommand{\sumkoscj}{\sum_{\substack{k\in \Z^{3,N}_0\\k_3\neq0,\quad j\in \{1,2\}}}}
\newcommand{\skj}{\sigma_{k,j}^{\eps}}
\newcommand{\skjcomp}[1]{\sigma_{k,j}^{#1,\eps}}
\newcommand{\smkj}{\sigma_{-k,j}^{\eps}}
\newcommand{\smkjcomp}[1]{\sigma_{-k,j}^{#1,\eps}}
\newcommand{\sk}[1]{\sigma_{k,#1}^{\eps}}
\newcommand{\skcomp}[2]{\sigma_{k,#2}^{#1,\eps}}
\newcommand{\smk}[1]{\sigma_{-k,#1}^{\eps}}
\newcommand{\smkcomp}[2]{\sigma_{-k,#2}^{#1,\eps}}
\newcommand{\tkj}{\theta_{k,j}^{\eps}}
\newcommand{\tkjcomp}[1]{\theta_{k,#1}^{\eps}}
\newcommand{\curl}{\operatorname{curl}}
\newcommand{\sumH}{\sum_{\substack{k_H\in \Z^2_0\\ N\leq \lvert k_H\rvert \leq 2N}}}
\newcommand{\ke}{\kappa_{\eps}}

\newenvironment{acknowledgements}{%
  \begin{abstract}
}{%
  \end{abstract}
}
\newenvironment{funding}{%
  \begin{abstract}
}{%
  \end{abstract}
}


\title[It\^o-Stratonovich Diffusion Limit for 3D Thin Domain]{On the It\^o-Stratonovich Diffusion Limit for the Magnetic Field in a 3D Thin Domain}
\author[F. Butori]{Federico Butori}
\address{Scuola Normale Superiore, Piazza dei Cavalieri, 7, 56126 Pisa, Italia}
\email{\href{mailto:federico.butori at sns.it}{federico.butori at sns.it}}
\author[F. Flandoli]{Franco Flandoli}
\address{Scuola Normale Superiore, Piazza dei Cavalieri, 7, 56126 Pisa, Italia}
\email{\href{mailto:franco.flandoli at sns.it}{franco.flandoli at sns.it}}
\author[E. Luongo]{Eliseo Luongo}
\address{Scuola Normale Superiore, Piazza dei Cavalieri, 7, 56126 Pisa, Italia}
\email{\href{mailto:eliseo.luongo at sns.it}{eliseo.luongo at sns.it}}
\date\today
\keywords{Thin Layer, Magnetohydrodynamics, Transport Noise, Eddy Viscosity Model, Alpha Effect, Beta Effect}
\subjclass{60H15, 76F25}
\date\today
\allowdisplaybreaks
\begin{document}
\begin{abstract}
We introduce a stochastic model for a passive magnetic field in a three dimensional thin domain. The velocity field, white in time and modelling phenomenologically a turbulent fluid, acts on the magnetic field as a transport-stretching noise. We prove, in a quantitative way, that, in the simultaneous scaling limit of the thickness of the thin layer and the separation of scales, the mean on the thin direction of the magnetic field is close to the solution of the equation of the magnetic field with additional dissipation. In certain choice of noises with correlation between their components, without mirror symmetry and with a non zero mean helicity, we identify an alpha-term, in addition to the extra dissipation term. However, it does not produce dynamo; consequently, we extend a no-dynamo theorem to thin layers.
\end{abstract}

\maketitle

\section{Introduction}\label{sec introduction}
We consider a linear vector-valued SPDE in the unknown $B_{t}^{\varepsilon
}=B_{t}^{\varepsilon}\left(  x\right)  \in\mathbb{R}^{3}$, $x\in
\mathbb{T}_{\varepsilon}^{3}=\mathbb{T}^{2}\times\mathbb{R}/(2\varepsilon
\pi\mathbb{Z})$ (a model of thin domain), $t\in\left[  0,T\right]  $, of
advection type, loosely written as (see the rigorous formulation in \autoref{sec main result} below)
\begin{align}\label{intro app eq}
\begin{cases}
\partial_{t}B_{t}^{\varepsilon}+\nabla\times\left(  v_{t}^{\varepsilon}\times
B_{t}^{\varepsilon}\right)    & =\eta\Delta B_{t}^{\varepsilon}\\
\operatorname{div}B_{t}^{\varepsilon}  & =0\\
B_{t}^{\varepsilon}|_{t=0}  & =B_{0}^{\varepsilon}%
\end{cases}
\end{align}
where $v_{t}^{\varepsilon}=v_{t}^{\varepsilon}\left(  x\right)  \in
\mathbb{R}^{3}$ is a white noise in time, with suitable space covariance
structure, described in detail in \autoref{subsect description noise}. The multiplications
$v_{t}^{\varepsilon}\cdot\nabla B_{t}^{\varepsilon}$ and $B_{t}^{\varepsilon
}\cdot \nabla v_{t}^{\varepsilon}$ appearing in the development of the term
$\nabla\times\left(  v_{t}^{\varepsilon}\times B_{t}^{\varepsilon}\right)  $
are understood in Stratonovich sense; then the system is rewritten, as
explained in \autoref{sec main result}, as an It\^{o} system with the It\^{o}-Stratonovich corrector.

We prove that the vertical average $\mB{t}=M_{\varepsilon}B_{t}^{\varepsilon}$, $M_{\varepsilon}$ given by \autoref{mean fluct operator} below, converges in a suitable distributional topology to the solution
$\overline{B}_{t}$ of the system%
\begin{align}\label{intro limit eq}
\begin{cases}
\partial_{t}\overline{B}_{t} & =\left(  \eta+\eta_{T}\right)  \Delta\overline{B}_{t}+\nabla\times\left(  \mathcal{A}\overline{B}%
_{t}\right)  \\
\operatorname{div}\overline{B}_{t}  & =0\\
\overline{B}_{t}|_{t=0}  & =\overline{B}_{0}%
\end{cases}
\end{align}
where $\overline{B}_{0}$ is the limit of $M_{\varepsilon}B_{0}^{\varepsilon}$.
Here $\eta_{T}$ is an ``eddy magnetic diffusivity'', linked to the
space-covariance of $v_{t}^{\varepsilon}\left(  x\right)  $; the additional
term $\eta_{T}\Delta\overline{B}_{t}$ is often called the beta-term in the
literature devoted to equations for the magnetic field driven by a turbulent
velocity field (see \autoref{subsect physics} below). $\mathcal{A}$ is
a matrix; the term $\nabla\times\left(  \mathcal{A}\overline{B}_{t}\right)  $
is usually called the alpha-term. The convergence result is stated in detail
by \autoref{main Theorem} and the formulation given above is reconstructed in \autoref{appB}. 

We classify such a theorem as \textit{It\^{o}-Stratonovich stochastic
diffusion limit}, in analogy with the deterministic and stochastic theory of
diffusion limits based on homogenization techniques (see for instance
\cite{MajdaKra}, \cite{Komor}). The approach however is different, strongly
based on the It\^{o}-Stratonovich corrector, initiated by \cite{galeati2020convergence} (see also
\cite{FlaLuoWaseda} and references therein), from which we give this name.

\autoref{main Theorem}, in the particular geometry of the thin domain is new and
particularly challenging both from the viewpoint of the proof (we devote
\autoref{sec strategy proof} to explain the strategy and its difficulties) and the Physics
behind (see \autoref{subsect physics}). The closest previous result in
this direction is \cite{flandoli2023boussinesq}, a 2D-2C (two-dimensional, three component)
model, where a number of ideas used also here have been already introduced.
However, the thin domain treated here is a truly 3D-3C model, which reduces to
2D-2C only in the limit. The control of the limit is demanding, especially for
the noise part, the new one compared to the classical references on thin
domain equations. This is the first truly 3D problem where the technique of
It\^{o}-Stratonovich stochastic diffusion limit is developed. The main source
of difficulty is the stochastic stretching term $B_{t}^{\varepsilon}%
\cdot\nabla v_{t}^{\varepsilon}$; without it, as in \cite{FlLuo21}, the
control of the limit is easier. The 2D-3C problem of \cite{flandoli2023boussinesq}, being 2D
from the beginning, eliminates part of the difficulty. The treatment of the
stretching term in full generality, namely not in the particular case of thin
domains and under our assumption described in \autoref{subsect description noise}, is an open problem.\\

The content of the paper is the following one. In 
\autoref{subsect physics} we describe some ideas that motivated us to treat this
problem. In \autoref{sec functional analysis main results} we introduce rigorously our framework. In particular, we define the structure of the noise, $v^{\eps}_t$, in \autoref{subsect description noise}, while we state our main result in \autoref{sec main result} and describe the main ideas behind its proof in \autoref{sec strategy proof}. The remainder of the paper is dedicated to proving our main result. In \autoref{section a priori estimates} we prove estimates for $B^{\eps}_t$ uniformly in the thickness of the domain, then we show the convergence of $\mB{t}$ to $\overline{B}_t$ in \autoref{sec proof main thm} . We conclude the paper with two appendices. In \autoref{appendix ito strat corr} we provide an explicit representation of the It\^o-Stratonovich corrector in \eqref{intro app eq}. In \autoref{appB} we compute an average of the helicity of $v^{\eps}_t$, highlighting its connection to the alpha-term in \eqref{intro limit eq}.
\subsection{Comments on the Physical motivation\label{subsect physics}}
The model introduced above for $B_{t}^{\varepsilon}$ is a classical model for
a magnetic field driven by a velocity field $v_{t}^{\varepsilon}$, with some
degree of resistivity (namely closedness to ideal) given by $\frac{1}{\eta}$. We refer to the classical monographs \cite{moffatt1978field}, \cite{parker1955hydromagnetic} for a detailed discussion on the topics highlighted in this section.
This model is used in Plasma Physics to investigate several phenomena, like
reconnection and dynamo effect. We shall deal with reconnection in a future
work under the more specific geometry of toroidal plasma. Here let us comment
about the model and result under the usual view of dynamo research.

The step from the equation for $B_{t}^{\varepsilon}$ to a \textquotedblleft
mean field" equation like the one above for $\overline{B}_{t}$ is a classical
step in the Physics literature on this topic. The usual procedure consists in
taking averages of $B_{t}^{\varepsilon}$, getting a Reynolds type tensor which
mixes bilinearly the randomness of $B_{t}^{\varepsilon}$ and $v_{t}%
^{\varepsilon}$, and close the equation for the average of $B_{t}%
^{\varepsilon}$ by suitable closure assumptions on the average of the bilinear
term. Compared to these derivations, we do not aim to get new final mean field
equations, since those indicated in Plasma Physics literature are already deep
and compared with reality; our aim is to justify them. Indeed, their validity
is far from trivial, presumably false in full generality, for substantial
reasons and not just for a question of mathematical rigour. Let us explain why.

In our approach, the It\^{o}-Stratonovich corrector appearing when we rewrite
the term $\nabla\times\left(  v_{t}^{\varepsilon}\times B_{t}^{\varepsilon
}\right)  $ from the Stratonovich form to the It\^{o} one, corresponds
precisely to the mean field term (or sum of terms) discovered in the
literature under the most natural closure formulae. But in our rigorous
derivation there is still the It\^{o} term, which should disappear in the
limit when $\varepsilon\rightarrow0$. There is no obvious reason why it should
disappear, and in general it contains fluctuations which may be very important
and alter the limit result. In scalar and 2D models, where stretching of
vector quantities does not appear, it is possible to prove that the It\^{o}
term is negligible in the limit (see for instance \cite{galeati2020convergence}, \cite{FlaLuoWaseda}, \cite{flandoli2021scaling}, \cite{flandoli2021quantitative}, \cite{carigi2023dissipation}, \cite{flandoli20232d}, \cite{flandoli2023reduced}%
). In 3D, the problem is extremely difficult. We start to throw some light
with this work in a truly 3D model, thanks to the asymptotic simplifications
of the thin domain; previously some basic ingredients have been understood for
the 2D-3C model of \cite{flandoli2023boussinesq}.

Therefore, to summarize the previous discussion, our aim is making rigorous,
when true, the mean field theory of a magnetic field $B_{t}^{\varepsilon}$
advected by a turbulent velocity field $v_{t}^{\varepsilon}$. We have
identified here a framework where the mean field theory is correct.

Let us now come to some of the usual conclusions of the (previously heuristic)
mean field theory and let us compare our results with them. The final mean
field system for $\overline{B}_{t}$ contains two new terms, the alpha and beta
terms. Let us start from the beta term $\eta_{T}\Delta\overline{B}_{t}$. Let
us recall the following sentence from Krause e R\"{a}dler \cite[Page 12]{krause2016mean}:
\textquotedblleft homogeneous isotropic mirror symmetric turbulence only
influences the decay rate of the mean magnetic fields, which is enhanced in
almost all cases of physical interest\textquotedblright. When the field
$v_{t}^{\varepsilon}$ is mirror symmetric, the average elicity of
$v_{t}^{\varepsilon}$ (defined in \autoref{appB} below in the case of our white
noise field $v_{t}^{\varepsilon}$) is zero, which then implies that
$\mathcal{A}=0$, namely the alpha-term $\nabla\times\left(  \mathcal{A}%
\overline{B}_{t}\right)  $ is absent. In this case, the only mean-field effect
of the turbulent field $v_{t}^{\varepsilon}$ is to produce the beta-term
$\eta_{T}\Delta\overline{B}_{t}$, which enhances the decay rate, as stated by \cite[Page 12]{krause2016mean}. Therefore our result rigorously confirms that prediction, in
the particular case of thin domains. 

Concerning the other additional term, namely the alpha-term $\nabla
\times\left(  \mathcal{A}\overline{B}_{t}\right)  $, the story is more
complicated. In fully 3D cases such a term may be responsible for the growth
of the magnetic field by stretching and it is advocated as one of the sources
of dynamo, a very important observed phenomenon which is related to the
maintenance of the magnetic field in certain planets and stars. However, in
our case the term $\nabla\times\left(  \mathcal{A}\overline{B}_{t}\right)  $,
although present, cannot produce an increase of the magnetic field. This is
due to the specific algebraic structure, described in \autoref{sec main result}. The matrix
$\mathcal{A}$\ is degenerate and propagates only certain information of
$\overline{B}_{t}$ (those coming from the vertical part of the vector
potential) which do not create a loop suitable for increase (the vertical part
of the vector potential may only dissipate). Therefore, the conclusion of our
work is that in the thin domain geometry we may produce an alpha-term but not
a dynamo. This is coherent with conclusions of the Plasma Physics literature
which state that dynamo does not appear when the magnetic field depends only
on two variables, see for instance \cite[Section 3.5]{gilbert2003dynamo}. This happens only in the limit of the thin domain, not for
the approximating equation, but the final lack of dynamo remains true.
\section{Functional Setting and Main Results}\label{sec functional analysis main results}
In \autoref{notation sec} we introduce the notation we follow along the paper. In \autoref{preliminaries subsec} we introduce some functional analysis tools we employ in order to carry on rigorously our analysis described in \autoref{sec introduction}. We describe the noise we employ in \autoref{subsect description noise} providing some insights in the meaning of the assumptions and comparisons with the existing literature. We state our main result in \autoref{sec main result} and describe the main ideas behind its proof in \autoref{sec strategy proof} for the convenience of the reader. 
\subsection{Notation}\label{notation sec}
In the following we denote by $\mathbb{T}^2=\mathbb{R}^2/(2\pi\mathbb{Z})^2$ (resp. $\mathbb{T}^3=\mathbb{R}^2/(2\pi\mathbb{Z})^3$) the two (resp. three) dimensional torus. Moreover, for each $\eps>0$ let us denote by \begin{align*}
\T^3_\eps=\T^2\times\mathbb{R}/(2\eps\pi\mathbb{Z}) 
\end{align*} the thin layer three dimensional torus. In \autoref{preliminaries subsec} we focus on function spaces on periodic domains which we in general denote by $\mathcal{D}$ varying between $\T^2,\ \T^3,\ \T^3_{\eps}$. \\
Similarly we denote by $\Z^3_0=\Z^3\setminus \{0\}$ (resp. $\Z^2_0=\Z^2\setminus \{0\}$) the three (resp. two) dimensional lattice made by points with integer coordinates. Moreover, for each $\eps>0$ let us denote by \begin{align*}
    \Z^{3,\eps^{-1}}_0=\Z^2\times \eps^{-1}\Z \setminus\{0\}
\end{align*}
the thin layer three dimensional lattice. We introduce a partition of $\Z^{3,\eps^{-1}}_0=\Gamma^{\eps}_+\cup \Gamma^{\eps}_-$ such that 
\begin{align*}
    \Gamma^{\eps}_+=\{k\in \Z^{3,\eps^{-1}}_0: k_3>0 \vee (k_3=0 \land k_2>0) \vee (k_3=k_2=0 \land k_1>0)\},\quad \Gamma^{\eps}_{-}=-\Gamma^{\eps}_+.
\end{align*}\\ 
If $v=(v_1,v_2,v_3)^t:\mathcal{D}\rightarrow \R^3$ we will sometimes denote by $Dv=(\nabla v)^t$, $v_H=(v_1,v_2)^t$ in the following.\\
For each $j\geq 0,\ l>1$ we denote by 
\begin{align*}
    \zeta^N_{H,j}= N^{j-2}\sumH \frac{1}{\lvert k_H\rvert^j},\quad \zeta_{H,j}=\int_{1\leq\lvert x\rvert\leq 2}\frac{dx_1dx_2}{\lvert x\rvert^j}=2\pi \begin{cases}
        \log 2 & \text{ if } j=2,\\
        \frac{2^{2-j}-1}{2-j} & \text{ if } j\neq 2
    \end{cases} , \quad \zeta_{l}=\frac{1}{2}\sum_{k\in \Z\setminus\{0\} }\frac{1}{\lvert k\rvert^l}<+\infty.
\end{align*}
In particular, by the approximation properties of Riemann sums, for each $j\geq 0$ it holds
\begin{align*}
 \zeta_{H,j}^N= \zeta_{H,j}N^{2-j}+O(N^{1-j}).
\end{align*}
For two vector fields $X$ and $Y$ , we write $\mathcal{L}_X Y$ for the Lie derivative $\mathcal{L}_X Y=X\cdot\nabla Y -Y\cdot\nabla X.$ We denote by $\{e_1,e_2,e_3\}$ the canonical basis of $\R^3$.\\
We denote by $[M, N]$ the quadratic covariation process,
which is defined for any couple $M,\ N$ of square integrable real semimartingales and we extend it by bilinearity to the analogue complex valued processes.\\
Let $a,b$ be two positive numbers, then we write $a\lesssim b$ if there exists a positive constant $C$ such that $a \leq C b$ and $a\lesssim_\xi b$ when we want to highlight the dependence of the constant $C$ on a parameter $\xi$. If $x\in \C$ we denote by $x^*$ its complex conjugate. If $\mathcal{X}$ is a tensor we denote by $\norm{\mathcal{X}}_{HS}$ its Hilbert-Schmidt norm. Lastly, if $x\in \R^2$ we denote by $x^{\perp}=(-x_2,x_1)^t.$
\subsection{Preliminaries}\label{preliminaries subsec}
\subsubsection{Function Spaces in the Periodic Setting}\label{function spaces periodic}
Let us start setting some classical notation before describing the main contributions of this work. We refer to monographs \cite{ FlaLuoWaseda, Marchioro1994,pazy2012semigroups,temam1995navier,temam2001navier, trie1983fun,trie1995fun} for a complete discussion on the topics shortly recalled in this section.\\ Let $ \left(H^{s,p}(\mathcal{D}),\norm{\cdot}_{H^{s,p}(\mathcal{D})}\right),\ s\in\mathbb{R},\ p\in (1,+\infty)$ be the Bessel spaces of periodic functions. In case of $p=2$, we simply write $H^{s}(\mathcal{D})$ in place of $H^{s,2}(\mathcal{D})$ and we denote by $\langle \cdot,\cdot\rangle_{H^s(\mathcal{D})}$ the corresponding scalar products. When dealing with functions having $\T^2$ as a domain we neglect the domain in the definition of norm and inner product, i.e. we write $\norm{\cdot}_{H^{s,p}}$ in place of $\norm{\cdot}_{H^{s,p}(\T^2)}$.
In case of $s=0$, we write $L^2(\mathcal{D})$ instead of $H^0(\mathcal{D})$ and we neglect the subscript in the notation for the norm and the inner product when considering functions having $\T^2$ as a domain. Instead, in case of functions having $\T^3_\eps$ as a domain we write $\norm{\cdot}_{\eps}$ (resp. $\langle \cdot,\cdot\rangle_{\eps}$)  in place of $\norm{\cdot}_{L^2(\T^3_\eps)}$ (resp. $\langle \cdot,\cdot\rangle_{L^2(\T^3_\eps)}$). With some abuse of notation, for $s>0$, we denote the duality between $H^{-s}(\mathcal{D})$ and $H^s(\mathcal{D})$ by $\langle\cdot,\cdot\rangle_{L^2(\mathcal{D})}$.\\
We denote by $\Dot{H}^{s,p}(\mathcal{D})$ (resp. $\Dot{H}^{s}(\mathcal{D}),\ \Dot{L^2}(\mathcal{D})$) the closed subspace of $H^{s,p}(\mathcal{D})$ (resp. $H^{s}(\mathcal{D}),\ L^2(\mathcal{D})$) made by zero mean functions with norm inducted by $H^{s,p}(\mathcal{D})$ (resp. $H^{s}(\mathcal{D}),\ L^2(\mathcal{D})$).
With some abuse of notation we denote the Laplacian with the same symbol, $\Delta$, either when considering functions with domain $\T^2, \T^3$ or $\T^3_{\eps}$, i.e. we  denote by \begin{align*}
    \Delta: \mathsf{D}(\Delta)\subseteq \Dot{L^2}(\mathcal{D})\rightarrow \Dot{L}^2(\mathcal{D}),
\end{align*}
where $\mathsf{D}(\Delta)=\Dot{H}^2(\mathcal{D})$. We follow the same notation when considering the orthogonal projection of $L^2(\mathcal{D})$ on $\Dot{L}^2(\mathcal{D})$, which we denote by $Q:L^2(\mathcal{D})\rightarrow \Dot{L}^2(\mathcal{D})$. The restriction (resp. extension) of $Q$ to $H^s(\mathcal{D})$ (resp. $H^{-s}(\mathcal{D})$) for $s>0$ is the orthogonal projection on $\Dot H^s(\mathcal{D})$ (resp. $\Dot H^{-s}(\mathcal{D})$). It is well known that ${\Delta}$ is the infinitesimal generator of analytic semigroup of negative type that we denote by $e^{t\Delta}:\Dot L^2(\mathcal{D})\rightarrow \Dot L^2(\mathcal{D})$ and moreover for each $\alpha\in \R$, $\mathsf{D}((-\Delta)^{\alpha})$ can be identified with $\Dot H^{2\alpha}(\mathcal{D})$. It is well known that the family $\{\frac{1}{2\pi\sqrt{2\pi\eps}}e^{ik\cdot x}\}_{k\in \Z^{3,\eps^{-1}}_0}$ (resp. $\{\frac{e^{ik\cdot x}}{2\pi}\}_{k\in \Z^2_0}$) is a complete orthogonal systems of $\Dot{H}^s(\T^3_{\eps})$ (resp. $\Dot{H}^s(\T^2)$) made by eigengunctions of $-\Delta$. Moreover it is orthonormal in $\Dot L^2(\T^3_{\eps})$ (resp. $\Dot L^2(\T^2)$). We recall for the convenience of the reader some properties of the Heat Semigroup we will exploit in the following: \begin{lemma}\label{Properties semigroup}
Let $q\in \Dot{H}^{\alpha}(\mathcal{D}),\ \alpha\in \mathbb{R}$. Then:
\begin{enumerate}[label=\roman*)]
    \item for any $\varphi\geq 0,$ it holds $\lVert e^{t {\Delta}}q\rVert_{\Dot{H}^{\alpha+\varphi}}\leq C_{\varphi}t^{-\varphi/2}\lVert q\rVert_{\Dot{H}^{\alpha}}$ for some constant increasing in $\varphi$;
    \item for any $\varphi\in [0,2],$ it holds $\lVert \left( I-e^{t{\Delta}}\right)q\rVert_{\Dot{H}^{\alpha-\varphi}}\lesssim_{\varphi} t^{\varphi/2}\lVert q\rVert_{\Dot{H}^{\alpha}}$.
\end{enumerate}
\end{lemma}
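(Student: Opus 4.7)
The plan is to reduce both estimates to pointwise multiplier inequalities in the Fourier basis. Using the orthonormal systems of eigenfunctions $\{\phi_k\}$ of $-\Delta$ on $\mathcal{D}\in\{\T^2,\T^3,\T^3_\eps\}$ recalled immediately before the statement, any $q \in \Dot{H}^\alpha(\mathcal{D})$ has an expansion $q = \sum_k \hat q_k \phi_k$ with
\[
\|q\|_{\Dot{H}^\beta}^2 = \sum_k |k|^{2\beta}|\hat q_k|^2, \qquad e^{t\Delta}q = \sum_k e^{-t|k|^2}\hat q_k \phi_k,
\]
for every $\beta \in \R$, by the identification of $\mathsf{D}((-\Delta)^\alpha)$ with $\Dot{H}^{2\alpha}(\mathcal{D})$ stated above. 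All three cases are treated simultaneously and the constants obtained do not depend on the particular domain (in particular not on $\eps$), since the estimates are pointwise in $k$.

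For part (i), I would write
\[
\|e^{t\Delta}q\|_{\Dot{H}^{\alpha+\varphi}}^2 = t^{-\varphi}\sum_k \bigl(t|k|^2\bigr)^\varphi e^{-2t|k|^2}\,|k|^{2\alpha}|\hat q_k|^2
\]
and apply the scalar bound $y^\varphi e^{-2y}\leq \widetilde C_\varphi$ on $y\geq 0$, which holds since the map $y\mapsto y^\varphi e^{-2y}$ is continuous, nonnegative, and vanishes both at $0$ and at $+\infty$ (explicitly the sup equals $(\varphi/(2e))^\varphi$ for $\varphi>0$). This yields the first claim with constant $\widetilde C_\varphi^{1/2}$; to promote it into a monotone constant as demanded by the statement, I set $C_\varphi := \sup_{0\leq \psi \leq \varphi}\widetilde C_\psi^{1/2}$.

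For part (ii), the analogous identity
\[
\|(I-e^{t\Delta})q\|_{\Dot{H}^{\alpha-\varphi}}^2 = \sum_k |k|^{2\alpha}\Bigl(\frac{1-e^{-t|k|^2}}{|k|^{\varphi}}\Bigr)^2|\hat q_k|^2
\]
reduces the claim to the scalar inequality $1-e^{-y}\lesssim_\varphi y^{\varphi/2}$ valid uniformly on $y\geq 0$ for $\varphi\in[0,2]$. This follows by splitting cases: on $\{y\geq 1\}$ one has $1-e^{-y}\leq 1 \leq y^{\varphi/2}$, while on $\{y\leq 1\}$ one uses $1-e^{-y}\leq y \leq y^{\varphi/2}$, where the last step relies precisely on the restriction $\varphi/2\leq 1$.

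No genuinely difficult step arises: the lemma is a direct spectral-theoretic reformulation of two elementary one-variable inequalities. The only mild subtleties are rearranging the constant in (i) so that it becomes monotone in $\varphi$, and the role of the cut-off $\varphi\leq 2$ in (ii), which is exactly what prevents the endpoint estimate $1-e^{-y}\leq y$ from being promoted to arbitrarily high exponents.
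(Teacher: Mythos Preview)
Your proof is correct; the paper does not actually prove this lemma but only states it as a recalled standard property of the heat semigroup, so there is nothing to compare against. Your spectral argument via the Fourier basis is the natural route, and both the monotonization of the constant in (i) and the role of the constraint $\varphi\le 2$ in (ii) are handled properly.
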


Similarly, for $d\in\{2,3\}$ we introduce the Bessel spaces of zero mean vector fields
\begin{align*}
    {H}^{s,p}(\mathcal{D};\mathbb{R}^d)&= \{u=(u_1,\dots, u_d)^t:\ u_1,\dots, u_d\in H^{s,p}(\mathcal{D})\},\\  \langle u, v\rangle_{{H}^s(\mathcal{D};\R^{d})}&=\sum_{j=1}^d\langle u_j,v_j\rangle_{H^s}, \quad \text{for } s\in \mathbb{R}.
\end{align*}
Again, in case of $s=0$ we write ${L}^2(\mathcal{D};\R^d)$ instead of ${H}^0(\mathcal{D};\R^d)$ and we neglect the subscript in the notation for the norm and the scalar product. Similarly we denote by $\Dot H^s(\mathcal{D};\R^d)$ (resp. $\Dot L^2(\mathcal{D};\R^d)$) the closed subspace of $H^s(\mathcal{D};\R^d)$ (resp. $L^2(\mathcal{D};\R^d)$) made by zero mean functions. In case of $\mathcal{D}=\T^{3}_{\eps},\ d=3$ we denote by $\mathbf{H}_{\eps}^{s}$ (resp. $\mathbf{L}^2_{\eps}$) the closed subspace of $H^s(\T^3_{\eps};\R^3)$ (resp. $L^2(\T^3_{\eps};\R^3)$) made by zero mean, divergence free vector fields with norm induces by $H^s(\T^3_{\eps};\R^3)$ (resp. $L^2(\T^3_{\eps};\R^3)$). 
With some abuse of notation we usually employ the same notation for norms and inner products that we introduced for scalar functions also for the corresponding vector fields, forgetting their range and we denote by \begin{align*}
    (-\Delta)^{\alpha} f=((-\Delta)^{\alpha} f_1,\dots, (-\Delta)^{\alpha} f_i)^t\quad \text{for }f\in \Dot{H}^{2\alpha}(\mathcal{D};{\R}^d).
\end{align*}

Having this notation in mind we recall the definition and the regularizing property of the Leray Projection and the Biot-Savart operator. We denote by \begin{align*}
    P_{\eps}:\Dot L^2(\T^3_{\eps};\R^3)\rightarrow \mathbf{L}^2_{\eps}
\end{align*} 
the Leray projection which, in the periodic setting, is an orthogonal projection even when acting between $\Dot H^s(\T^{3}_{\eps};\R^3)$ and $\mathbf{H}^s_{\eps}.$\\
If $v\in \mathbf{H}_{\eps}^{s}$ there exists a unique vector field $K_{\eps}[v]$ such that $\operatorname{curl}K_{\eps}[v]=v,\ \operatorname{div}K_{\eps}[v]=0$. Moreover the operator \begin{align*}
    K_{\eps}:\mathbf{H}_{\eps}^{s}\rightarrow \mathbf{H}_{\eps}^{s+1}
\end{align*} is linear and continuous for each $s\in \R$ and is given by $K_{\eps}[v]=(-\Delta)^{-1}\operatorname{curl}v$. In particular, if $v:\T^3_{\eps}\rightarrow\R^3,\ v=(v_1,v_2,v_3)^t,$ is independent from the third component, we have
\begin{align}\label{Biot savart independent from third component}
  K_{\eps}[v]=(-\nabla_H^{\perp}(-\Delta)^{-1}v^3,(-\Delta)^{-1}\nabla^{\perp}_H\cdot v^H)^t,  
\end{align}
where we denoted by $\nabla^{\perp}_H=(-\partial_2,\partial_1)$.

We conclude this subsection introducing some classical notation when dealing with stochastic processes taking values in separable Hilbert spaces. Let $Z$ be a separable Hilbert space, with associated norm $\| \cdot\|_{Z}$. We denote by $C_{\mathcal{F}}\left(  \left[  0,T\right]  ;Z\right) $ the space of weakly continuous adapted processes $\left(  X_{t}\right)  _{t\in\left[
0,T\right]  }$ with values in $Z$ such that
\[
\mathbb{E} \bigg[ \sup_{t\in\left[  0,T\right]  }\left\Vert X_{t}\right\Vert
_{Z}^{2}\bigg]  <\infty
\]
and by $L_{\mathcal{F}}^{p}\left(  0,T;Z\right),\ p\in [1,\infty),$ the space of progressively
measurable processes $\left(  X_{t}\right)  _{t\in\left[  0,T\right]  }$ with
values in $Z$ such that
\[
    \mathbb{E} \bigg[ \int_{0}^{T}\left\Vert X_{t}\right\Vert _{Z}^{p}dt \bigg]
<\infty.
\]
\subsubsection{Preliminaries on the Thin Layer}
As discussed in \autoref{sec introduction}, it is convenient to introduce the mean operator, $M_{\eps}$, and the fluctuations operator, $N_{\eps}$, in order to exploit the properties of working with a thin layer. Therefore, let us recall their definitions and some properties which we will use in the following. We refer to \cite{RaugelII,Raugel1, TemamZianeThin3d} for the proof of these results and some more discussions on the topic.
\begin{definition}\label{mean fluct operator}
We denote by $M_{\epsilon}:\Dot L^2(\T^3_{\eps})\rightarrow \Dot L^2(\T^3_{\eps})$ the mean operator in the third variable, i.e. \begin{align*}
        M_{\epsilon}\phi(x_1,x_2)=\frac{1}{2\pi\epsilon}\int_0^{2\pi\epsilon} \phi(x_1,x_2,x_3)dx_3.
    \end{align*}
    Similarly $N_{\epsilon}=I-M_{\epsilon}$, therefore $\int_0^{2\pi\epsilon} N_{\epsilon}\phi(x_1,x_2,x_3)dx_3=0.$   
\end{definition}
The operator $M_{\epsilon}$ and $N_{\epsilon}$ are projection either in $\Dot L^2(\T^3_\epsilon)$ and in $\Dot H^1(\T^3_\epsilon)$. More in general the following holds.
\begin{lemma}\label{properties of M N}$ $\\
\begin{enumerate}[label=\roman*)]
    \item The operators $M_{\eps},\ N_{\eps}$ are orthogonal projections in $\Dot H^s(\T^3_{\eps})$ for each $s\in \R.$ In particular:
    \begin{align}\label{property 1}
M_{\epsilon}N_{\epsilon}f=N_{\eps}M_{\eps}f=0\quad \forall f\in \Dot H^s(\T^3_{\eps}).
\end{align}
\item $M_{\eps},\ N_{\eps}$ commute with derivative and fractional powers of the Laplacian. In particular, $M_{\eps},\ N_{\eps}$ commute with $K_{\eps}$. 
\item For each $u,\ v,\ w:\T^3_{\eps}\rightarrow \R^3$ such that $\operatorname{div}u=\operatorname{div}v=\operatorname{div}w=0$ and $\langle u\cdot\nabla v,w\rangle_{\eps}$ is well defined, than we have
\begin{align}\label{property 3}
\langle u\cdot\nabla v,M_{\eps}w\rangle_{\eps}=\langle M_{\eps}u\cdot\nabla M_{\eps}v,M_{\eps}w\rangle_{\eps}+\langle N_{\eps}u\cdot\nabla N_{\eps}v,M_{\eps}w\rangle_{\eps}.    \end{align}
\begin{align}\label{property 4}
\langle u\cdot\nabla v,N_{\eps}w\rangle_{\eps}=\langle M_{\eps}u\cdot\nabla N_{\eps}v,N_{\eps}w\rangle_{\eps}+\langle N_{\eps}u\cdot\nabla M_{\eps}v,N_{\eps}w\rangle_{\eps}+\langle N_{\eps}u\cdot\nabla N_{\eps}v,N_{\eps}w\rangle_{\eps}. \end{align}
\item  For each $f\in \Dot H^{1}(\T^3_{\eps})$ the following Poincaré inequality holds \begin{align}\label{poincare thin layer}
        \norm{N_{\epsilon}v}_{{\epsilon}}\leq \epsilon \norm{\partial_{3}N_{\epsilon}v}_{{\epsilon}}.
    \end{align}
\end{enumerate}
\begin{remark}\label{remark fourier decomposition and splitting products}
In our periodic setting, the mean operator $M_{\eps}$ preserves Fourier modes which are independent from the third variable. Instead, the fluctuations operator $N_{\eps}$ preserves Fourier modes which depend from the third variable, i.e. if $f\in \Dot H^s({\T^{3}_\eps})$ then 
\begin{align*}
    f=\sum_{k\in \Z^{3,\eps^{-1}}_0}\langle f, e^{-ik\cdot x}\rangle_{\eps}\frac{e^{ik\cdot x}}{2\pi\eps}\end{align*}
    and \begin{align*}
    M_{\eps}f=\sum_{\substack{{k\in \Z^{3,\eps^{-1}}_0}\\ k_3=0}}\langle f, e^{-ik\cdot x}\rangle_{\eps}\frac{e^{ik\cdot x}}{8\pi\eps}\quad N_{\eps}f=\sum_{\substack{k\in \Z^{3,\eps^{-1}}_0\\ k_3\neq 0}}\langle f, e^{-ik\cdot x}\rangle_{\eps}\frac{e^{ik\cdot x}}{8\pi\eps}.
\end{align*}
Exploiting Fourier decomposition, one can easily prove that, more in general, if $u,\ v,\ w:\T^3_{\eps}\rightarrow \R$ are regular enough such that $uv\in  H^s(\T^3),\ s\in \R$ is well defined (e.g. $s=0,\ u,v\in \Dot L^4(\T^3_{\eps})$), then we have
\begin{align*}
    M_{\eps}(Q[uv])=Q[M_{\eps}u M_{\eps}v]+M_{\eps}\left(Q[N_{\eps}uN_{\eps}v]\right),\quad N_{\eps}(Q[uv])=M_{\eps}{u}N_{\eps}v+N_{\eps}uM_{\eps}v+N_{\eps}(Q[N_{\eps}uN_{\eps}v]).
\end{align*}
\end{remark}
\end{lemma}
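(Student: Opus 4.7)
The plan is to derive all four statements from the Fourier characterization of $M_\eps$ and $N_\eps$ already recalled in the remark. The system $\{e^{ik\cdot x}/(2\pi\sqrt{2\pi\eps})\}_{k \in \Z^{3,\eps^{-1}}_0}$ is a complete orthonormal basis of $\Dot L^2(\T^3_\eps)$ made of eigenfunctions of $-\Delta$, so $\Dot H^s(\T^3_\eps)$ is isometric to the weighted $\ell^2$ space of Fourier coefficients with weight $|k|^{2s}$. Since $M_\eps$ is precisely the orthogonal projection onto the modes with $k_3=0$ and $N_\eps$ the projection onto the modes with $k_3\neq 0$, both are orthogonal projections in $\Dot H^s$ for every $s\in\R$, which yields (i). Claim (ii) is equally immediate: $(-\Delta)^{\alpha}$ and each $\partial_j$ act diagonally on the basis (as Fourier multipliers by $|k|^{2\alpha}$ and $ik_j$ respectively), hence they commute with any projection onto a subfamily of Fourier modes; commutation with $K_\eps = (-\Delta)^{-1}\curl$ then follows by composition.

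For (iii) I would decompose $u = M_\eps u + N_\eps u$ and $v = M_\eps v + N_\eps v$ inside the trilinear form, producing four contributions, and show that the ``mixed'' ones vanish. To prove \eqref{property 3}, pair against $M_\eps w$: the term $\langle M_\eps u\cdot \nabla N_\eps v, M_\eps w\rangle_\eps$ vanishes because $M_\eps u$ and $M_\eps w$ are independent of $x_3$, so the inner $x_3$-integral reduces, componentwise, to $\int_0^{2\pi\eps}\partial_j N_\eps v_i\,dx_3$, which equals zero for $j\in\{1,2\}$ since $N_\eps v_i$ has zero $x_3$-mean, and zero for $j=3$ by $x_3$-periodicity of $N_\eps v_i$. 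The other mixed term $\langle N_\eps u\cdot \nabla M_\eps v, M_\eps w\rangle_\eps$ vanishes because $\nabla M_\eps v$ is independent of $x_3$ (in particular $\partial_3 M_\eps v=0$), so the $M_\eps$ factors pull out of the $x_3$-integral, leaving $\int_0^{2\pi\eps} N_\eps u_j\,dx_3 = 0$. An entirely analogous argument for \eqref{property 4}, this time pairing against $N_\eps w$, shows that the only contribution which vanishes is $\langle M_\eps u\cdot \nabla M_\eps v, N_\eps w\rangle_\eps$, whose integrand is the product of an $x_3$-independent factor and $N_\eps w$ of zero $x_3$-mean.

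Finally, (iv) follows once more from the Fourier expansion: on the support of $N_\eps f$ the condition $k_3\neq 0$ combined with $k_3\in \eps^{-1}\Z$ forces $|k_3|\geq 1/\eps$, so with $\hat f_k = \langle f, e^{-ik\cdot x}\rangle_\eps/(2\pi\sqrt{2\pi\eps})$,
\[
\norm{N_\eps f}_\eps^2 = \sum_{k_3\neq 0}|\hat f_k|^2 \leq \eps^2 \sum_{k_3\neq 0}|k_3|^2|\hat f_k|^2 = \eps^2\norm{\partial_3 N_\eps f}_\eps^2.
\]
None of the four items is technically deep; the only point requiring any genuine care is the bookkeeping in (iii), where one must separate the cases $j\in\{1,2\}$ and $j=3$ in the first mixed term and invoke zero $x_3$-mean in the former and $x_3$-periodicity in the latter, which are conceptually the same fact but appear differently in the calculation.
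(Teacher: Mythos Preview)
Your argument is correct and complete. The paper does not actually prove this lemma; it states the result and defers to the references \cite{RaugelII,Raugel1,TemamZianeThin3d}, so there is no ``paper's own proof'' to compare against. Your Fourier-based approach is the standard one and is exactly the justification hinted at in the accompanying \autoref{remark fourier decomposition and splitting products}. One minor observation: your proof of (iii) does not use the divergence-free hypotheses on $u,v,w$ at all, which is fine---the identities \eqref{property 3} and \eqref{property 4} indeed hold without them, and the hypothesis is presumably stated only because that is the setting in which the result is applied.
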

\subsection{Description of the Stochastic System}\label{subsect description noise}
We move now to the description of the noise. We shall define it in a sort of expansion in the Fourier basis. For each $ k\in \Z^{3,\eps^{-1}}_0,\ j\in\{1,2\}$ we denote by $\skj(x)=\tkj  a_{k,j}e^{ik\cdot x}$, where $\{\frac{k}{\lvert k\rvert}, a_{k,1}, a_{k,2}\}$ is an orthonormal system of $\R^3$ for $k_3> 0$ and $a_{k,j}=a_{-k,j}$ if $k_3<0$, while the (complex) coefficients $\theta_{k,j}^\eps$ will be defined later. 
It is well known that the family $\{a_{k,j}\frac{1}{2\pi\sqrt{2\pi\eps}}e^{ik\cdot x}\}, \ {{k\in \Z^{3,\eps^{-1}}_0, \ j\in\{1,2\}}}$ is a complete orthogonal systems of $\mathbf{H}^s_{\eps}$ made by eigenfunctions of $-\Delta$. Moreover it is orthonormal in $\mathbf{L}^2_{\eps}$. 
Without losing of generality, we make a choice that will simplify some of the computations: when $k_3=0$ we choose $a_{k,1}=(-k_2/\lvert k\rvert,k_1/\lvert k\rvert,0),\ a_{k,2}=(0,0,1)$ if $k\in \Gamma_{+}^{\eps}$, $a_{k,j}=a_{-k,j}$ otherwise. 
Now we define the Brownian motions. Our construction aims at obtaining a non trivial space covariance structure, while retaining the description in the Fourier basis, since this choice will simplify many computations thanks to \autoref{remark fourier decomposition and splitting products}. The physical intuition behind our construction is discussed \autoref{rmk: noise motivation}. Begin with a correlation factor $\rho\in [-1,1]$. Then assume  $\left(\Omega,\mathcal{F},\mathcal{F}_t,\mathbb{P}\right)$ is a filtered probability space such that $(\Omega, \mathcal{F},\mathbb{P})$ is a complete probability space, $(\mathcal{F}_t)_{t\in [0,T]}$ is a right continuous filtration and $\mathcal{F}_0$ contains every $\mathbb{P}$ null subset of $\Omega$.
Now, for each $N\in \N$, $\{W_t^{k,j}\}, \ {{k\in \Z^{3,N}_0, \  j\in \{1,2\}}}$ is a sequence of complex-valued Brownian motions adapted to $\mathcal{F}_t$ 
    such that
$W^{-k,j}_t=(W^{k,j}_t)^*$ and 
\begin{align*}
    \expt{W^{k,j}_1,{W^{l,m}_1}^*}&=\begin{cases}
        2 & \textit{if } k=l, \ m=j\\
        2\rho &  \textit{if } k=l,\ k_3=0,\ m\neq j;
    \end{cases}
    \quad \left[W^{k,j}_\cdot, W^{l,m}_\cdot\right]_t=\begin{cases}
        2t & \textit{if } k=-l, \ m=j\\
        2\rho t &  \textit{if } k=-l,\ k_3=0,\ m\neq j.    
        \end{cases}
\end{align*}
Then we define \begin{align*}
    W^{\eps}_t=\sum_{\substack{k\in \Z^{3,N}_0\\ j\in \{1,2\}}}\skj W^{k,j}_t,
\end{align*} with $\skj=\tkj a_{k,j}e^{ik\cdot x}$. The role of the factor $\rho$ is to introduce some correlation between the vertical and horizontal components of the `2D' modes (those who have $k_3=0$) of the random field. 
With our choices, to ensure that $W^\eps$ takes values in a space of real valued functions, the coefficients must satisfy the condition \begin{equation}\label{symmetry of tks}
    {\tkj}^{*}=\theta_{-k,j}^\eps.\end{equation}
\begin{remark}
The complex Brownian motions $W^{k,j}_t$ required can be constructed in the following way: let $\{B_t^{k,j}\}_{\substack{k\in \Z^{3,N}_0\\ j\in \{1,2\}}}$ be a sequence of real, Brownian motions adapted to $\mathcal{F}_t$ such that
\begin{align*}
        \expt{B^{k,j}_t B^{l,m}_s}=\left(t\wedge s\right)\left(\delta_{j,m}\delta_{k,l}+\rho\delta_{\lvert j-m\rvert=1}\delta_{k,l}\delta_{k_3,0}\right)
    \end{align*}and then set
    \begin{align*}
        W^{k,j}_t=\begin{cases}
            B^{k,j}_t+iB^{-k,j}_t& \text{if } k\in \Gamma^{\eps}_+ \\
            B^{-k,j}_t-iB^{k,j}_t & \text{if } k\in \Gamma^{\eps}_-.
        \end{cases}
    \end{align*}
\end{remark}
Finally, we make an explicit choice of the coefficients $\tkj$:
\begin{align}\label{definition thetakj}
    \tkj &=\one_{\{N\leq \lvert k_H\rvert\leq 2N\} }\begin{cases}
        \frac{i}{C_{1,H}\lvert k\rvert}& \text{if } k_3=0, k\in\Gamma_{+}^{\eps},\ j=1\\
         \frac{-i}{C_{1,H}\lvert k\rvert}& \text{if } k_3=0, k\in\Gamma_{-}^{\eps},\ j=1\\
        \frac{1}{C_{2,H}\lvert k\rvert^{\gamma/2}}& \text{if } k_3=0,\ j=2\\
        \frac{1}{C_{V}\lvert k\rvert^{\beta/2}}& \text{if } k_3\neq 0.
    \end{cases}
\end{align}
 The parameters $\eps, N, C_{1,H}, C_{2,H}, C_{V},\beta,\gamma$ cannot be arbitrary. Indeed we will always assume in the following even if not specified:
\begin{hypothesis}\label{HP noise}$ $\\
\begin{itemize}
 \item The width of the thin layer is $2\pi\eps=\frac{2\pi}{N}$, i.e. $\eps=\frac{1}{N}$ for $N\in\N$.
     \item $\beta\geq 4,\ \gamma\geq 4.$ 
    \item $C_{1,H}>\sqrt{\frac{\zeta_{H,0}}{\eta}},\ C_{2,H},\ C_V>0$.
\end{itemize}
\end{hypothesis}
More general choices for the phase and the sign of our coefficients are allowed, although the computations and the notation get heavier, while the decay rate ($\beta, \gamma \ge 4$) is optimal in our setting. For this reason we decide to avoid this level of generality.
\begin{remark}\label{remark coefficients }
    Due to our choice of the coefficients $\tkj$ and the vectors $a_{k,j}$ it follows that for each $k$ such that $k_3=0,\ N\leq \lvert k\rvert \leq 2N$
    \begin{align*}
        \tkjcomp{1} a_{k,1}=\frac{i}{C_{1,H}}\frac{k^{\perp}}{\lvert k\rvert^2},\ \tkjcomp{2} a_{k,2}=\frac{1}{C_{2,H}}\frac{e_3}{\lvert k\rvert^{\gamma/2}}.
    \end{align*}
    From which it follows that the 2D modes of our noise naturally split in horizontal and vertical components
\begin{align*}
    \overline{W^{\eps,H}}:= \sum_{\substack{k\in \Z_0^{3, N} \\ {k_3=0, j=1} }}\sigma_{k,j}^{\eps}W_k^j \qquad \overline{W^{\eps,3}}:= \sum_{\substack{k\in \Z_0^{3, N} \\ {k_3=0, j=2} }}\sigma_{k,j}^{\eps}W_k^j
\end{align*}
Moreover the components satisfy the relations 
\begin{align*}
    \overline{W^{\eps,H}}(-x)=-\overline{W^{\eps,H}}(x), \qquad \overline{W^{\eps, 3}}(-x)=\overline{W^{\eps, 3}}(x).
\end{align*}
As shown in \cite{flandoli2023boussinesq}, this anisotropy of the noise is the main responsible for the appearance of the alpha-term in this kind of models. 
\end{remark}
The space covariance function associated to our noise is \begin{align}\label{def covariance}
Q^{\eps}(x,y)=\expt{W^{\eps}_1(x) \otimes {W^{\eps}_1(y)}^* }=Q^{\eps}_0(x,y)+\overline{Q^{\eps}_{\rho}}(x,y)=\overline{Q^{\eps}}(x,y)+\left(Q^{\eps}\right)'(x,y)+\overline{Q^{\eps}_{\rho}}(x,y)
\end{align} 
where we denoted by \begin{align*}
\overline{Q^{\eps}}(x,y)&=2\sumkmeanj \left\lvert \tkj\right\rvert^2 a_{k,j}\otimes a_{k,j}e^{ik\cdot(x-y)},\quad \left(Q^{\eps}\right)'(x,y)=2\sumkoscj \left\lvert \tkj\right\rvert^2 a_{k,j}\otimes a_{k,j}e^{ik\cdot(x-y)},\\
\overline{Q^{\eps}_{\rho}}(x,y)&=2\rho \sumkmeancov  \left(\tkjcomp{1}\left(\tkjcomp{2}\right)^* a_{k,1}\otimes a_{k,2}+\tkjcomp{2}\left(\tkjcomp{1}\right)^* a_{k,2}\otimes a_{k,1}\right)e^{ik\cdot(x-y)}.   
\end{align*}
Due to our choice, we have that $\overline{Q^{\eps}}(x,y),$ $\left(Q^{\eps}\right)'(x,y)$ and $\overline{Q^{\eps}_{\rho}}(x,y)$ are all translation invariant, therefore we simply write $Q^{\eps}(x-y)$ (resp. $Q^{\eps}_0(x-y),\ \overline{Q^{\eps}}(x-y),\ \left(Q^{\eps}\right)'(x-y),\ \overline{Q^{\eps}_{\rho}}(x-y)$) in place of  $Q^{\eps}(x,y)$ (resp. $Q^{\eps}_0(x,y),\ \overline{Q^{\eps}}(x,y),\ \left(Q^{\eps}\right)'(x,y),\ \overline{Q^{\eps}_{\rho}}(x,y)$). In particular $Q^{\eps}_0(x),\ \overline{Q^{\eps}}(x),\ \left(Q^{\eps}\right)'(x)$ are mirror symmetric, namely it holds 
\begin{align}\label{mirror symmetric property 1}
Q^{\eps}_0(x)=Q^{\eps}_0(-x),\ \overline{Q^{\eps}}(x)=\overline{Q^{\eps}}(-x),\ \left(Q^{\eps}\right)'(x)=\left(Q^{\eps}\right)'(-x).  
\end{align}
On the contrary $\overline{Q^{\eps}_{\rho}}(x)$ is an odd function, in particular $\overline{Q^{\eps}_{\rho}}(0)=0$. As a consequence, our noise satisfies the mirror symmetry property if and only if $\rho=0$ .
Let us understand better the behavior of $Q^{\eps}(0)$. Indeed, as discussed in \cite[Chapter 3]{FlaLuoWaseda}, it plays a crucial in order to provide the dissipation properties of our limit object. Let us start considering $\left(Q^{\eps}\right)'(0)$, we have:
\begin{align*}
\left(Q^{\eps}\right)'(0)=2\sum_{\substack{k\in \Z^{3,N}_0\\ N\leq\lvert k_H\rvert\leq 2N}}\frac{1}{C_V^2\lvert k\rvert^{\beta}}\left(I-\frac{k\otimes k}{\lvert k\rvert^2}\right). 
\end{align*}
Arguing as in \cite[Section 2.3]{galeati2020convergence}, it follows easily that $\left(Q^{\eps}\right)'(0)$ is a diagonal matrix, moreover it holds
\begin{align*}
\left(Q^{\eps}\right)'(0)=2(e_1\otimes e_1+e_2\otimes e_2)\eta^{\eps}_{V,T}+2(e_3\otimes e_3)\eta_{V,R}^\eps,
\end{align*}
where we denoted by 
\begin{align*}
\eta^{\eps}_{V,T}=\sum_{\substack{k\in \Z^{3,N}_0,\ k_3\neq 0\\ N\leq\lvert k_H\rvert\leq 2N}}\frac{1}{C_V^2\lvert k\rvert^{\beta}}\left(1-\frac{\lvert k_H\rvert^2}{2\lvert k\rvert^2}\right),\quad \eta_{V,R}^\eps=\sum_{\substack{k\in \Z^{3,N}_0,\ k_3\neq 0\\ N\leq\lvert k_H\rvert\leq 2N}}\frac{1}{C_V^2\lvert k\rvert^{\beta}}\left(1-\frac{\lvert k_3\rvert^2}{\lvert k\rvert^2}\right).     
\end{align*}
In particular both $\eta^{\eps}_{V,T}, \ \eta^{\eps}_{V,R}$ go to $0$ as $\eps\rightarrow 0$. Indeed it holds
\begin{align}\label{asymptotic coeffienct vertical noise}
\eta^{\eps}_{V,T}+\eta^{\eps}_{V,R}&\lesssim \sum_{\substack{k\in \Z^{3,N}_0, \ k_3\neq 0\\ N\leq\lvert k_H\rvert\leq 2N}}\frac{1}{\left(\lvert k_H\rvert^{2}+\lvert k_3\rvert^2\right)^{\beta/2}}\notag\\ &\leq \sumH \sum_{k_3\in \Z,\ k_3\neq 0} \frac{1}{\left(1+\lvert k_3\rvert^2\right)^{\beta/2}N^{\beta}}\notag\\ & \leq \zeta^N_{H,0}\zeta_{\beta}\frac{1}{N^{\beta-2}}=O(N^{2-\beta}).   
\end{align}
In order to treat $\overline{Q^{\eps}}(0)$ we introduce a further splitting of the sum. We write
\begin{align*}
\overline{Q^{\eps}}(x)=\overline{Q_T^{\eps}}(x)+\overline{Q_R^{\eps}}(x),    
\end{align*}
denoting by 
\begin{align*}
\overline{Q^{\eps}_T}(x)=2\sumkmeanjuno \left\lvert\tkj\right\rvert^2 a_{k,j}\otimes a_{k,j}e^{ik\cdot x},\quad \overline{Q^{\eps}_R}(x)=2\sumkmeanjdue \left\lvert\tkj\right\rvert^2 a_{k,j}\otimes a_{k,j}e^{ik\cdot x}.    
\end{align*}
Of course, $\overline{Q^{\eps}_T}(x), \overline{Q^{\eps}_R}(x)$ satisfy mirror symmetry property too. By definition it holds
\begin{align*}
\overline{Q^{\eps}_R}(0)=2\left(e_3\otimes e_3\right) \eta^{\eps}_{H,R},\quad \eta^{\eps}_{H,R}=\sumH \frac{1}{C_{2,H}^2\lvert k_H\rvert^{\gamma}}.
\end{align*}
Therefore also $\eta^{\eps}_{H,R}$ goes to $0$ as $\eps \rightarrow 0$. Indeed, we have 
\begin{align}\label{asymptotic coeffienct horizontal noise 1}
\eta^{\eps}_{H,R}=\frac{\zeta^N_{H,\gamma}N^{2-\gamma}}{C_{2,H}^2}=O(N^{2-\gamma}).    
\end{align}
Lastly we need to study the asymptotic behavior of $\overline{Q^{\eps}_T}(0)$ which is the one we will keep track in the scaling limit. Arguing as in \cite[Section 2.3]{galeati2020convergence}, it follows easily that also $\overline{Q^{\eps}_T}(0)$ is a diagonal matrix, moreover it holds
\begin{align*}
    \overline{Q^{\eps}_T}(0)=\frac{\zeta_{H,2}^N}{C_{1,H}^2}(e_1\otimes e_1+e_2\otimes e_2)=\frac{\zeta_{H,2}}{C_{1,H}^2}(e_1\otimes e_1+e_2\otimes e_2)+O(N^{-1}).
\end{align*}
We set \begin{align*}
\eta^{\eps}_T=\eta^{\eps}_{V,T}+\frac{\zeta^N_{H,2}}{C_{1,H}^2}=\frac{\zeta_{H,2}}{C_{1,H}^2}+O(N^{-1}), \quad \eta^{\eps}_R=\eta^{\eps}_{V,R}+\eta^{\eps}_{H,R}=O(N^{-4}),\quad \eta_T=\frac{\zeta_{H,2}}{C_{1,H}^2}.
\end{align*}

We state now some results on the covariance functions $Q^{\eps}(x),$ $Q^{\eps}_0(x),$ $\overline{Q_{\rho}^{\eps}}(x),$ $\overline{Q^{\eps}}(x),$ $\left(Q^{\eps}\right)'(x),$ $\overline{Q_T^{\eps}}(x),$ $\overline{Q_R^{\eps}}(x)$. The first ones are strongly related to the invariance by translation of the covariance functions, therefore holds for all of them. We give the detailed statement and proof only for $Q^{\eps}_0(x)$, the others being analogous.
\begin{lemma}\label{lemma properties covariance operator 1}
For each $l,m\in \{1,2,3\}$ it holds
\begin{align}\label{property derivatives 1}
    \left(\partial_{l} Q^{\eps}_0\right)(x-y)&=\partial_{x_l} Q^{\eps}_0(x-y)=2i\sumkj \left\lvert\tkj\right\rvert^2 k_l a_{k,j}\otimes a_{k,j}e^{ik\cdot(x-y)},\\
    \label{property derivatives 2}
    \left(\partial_{l}\partial_m Q^{\eps}_0\right)(x-y)&=-\partial_{x_l}\partial_{y_m}Q^{\eps}_0(x-y)=-2\sumkj \left\lvert\tkj\right\rvert^2 k_l k_m a_{k,j}\otimes a_{k,j}e^{ik\cdot(x-y)}.
\end{align}
In particular
\begin{align}\label{property derivatives 3}
    \left(\partial_{l} Q^{\eps}_0\right)(0)&=2i\sumkj \left\lvert\tkj\right\rvert^2 k_l a_{k,j}\otimes a_{k,j},\\
    \label{property derivatives 4}
    \left(\partial_{l}\partial_m Q^{\eps}_0\right)(0)&=-2\sumkj \left\lvert\tkj\right\rvert^2 k_l k_m a_{k,j}\otimes a_{k,j}.
\end{align}
\end{lemma}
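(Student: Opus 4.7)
The plan is to obtain all four identities by differentiating the Fourier series defining $Q^{\eps}_0$ term by term, using translation invariance to pass between the two notational conventions. By \eqref{def covariance} and the fact that $\overline{Q^{\eps}}$ and $(Q^{\eps})'$ are translation invariant, I have the single expression
\begin{align*}
Q^{\eps}_0(x-y)=2\sum_{\substack{k\in \Z^{3,N}_0\\ j\in\{1,2\}}}\left\lvert\tkj\right\rvert^2 a_{k,j}\otimes a_{k,j}\, e^{ik\cdot(x-y)},
\end{align*}
which I regard either as a function of the single variable $z=x-y$ (giving the symbol $\partial_l$ on the left-hand side of \eqref{property derivatives 1}) or as a function of two variables (giving $\partial_{x_l}$, $\partial_{y_m}$).

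First I would justify termwise differentiation up to order two. The coefficients $\tkj$ prescribed in \eqref{definition thetakj} satisfy $|\tkj|^2=O(|k|^{-2})$ on the horizontal modes with $j=1$, $|\tkj|^2=O(|k|^{-\gamma})$ on the horizontal modes with $j=2$, and $|\tkj|^2=O(|k|^{-\beta})$ on the vertical modes, with the cutoff $N\le |k_H|\le 2N$ restricting the support. Because \autoref{HP noise} forces $\beta\ge 4$ and $\gamma\ge 4$, the series $\sum |\tkj|^2 |k|^2$ converges absolutely and uniformly in $(x,y)$; thus the series can be differentiated twice term by term, and the resulting series still converges absolutely. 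This provides the only non-trivial input to the lemma.

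Next, applying $\partial_{x_l}$ to $e^{ik\cdot(x-y)}$ brings down the factor $ik_l$, and applying $\partial_{z_l}$ to $e^{ik\cdot z}$ also brings down $ik_l$, so the two notational readings agree and \eqref{property derivatives 1} follows immediately. For \eqref{property derivatives 2}, applying $\partial_{y_m}$ to $e^{ik\cdot(x-y)}$ gives $-ik_m$, hence $\partial_{x_l}\partial_{y_m} e^{ik\cdot(x-y)}=k_l k_m\, e^{ik\cdot(x-y)}$, while the one-variable derivative $\partial_{z_l}\partial_{z_m}e^{ik\cdot z}=-k_l k_m\, e^{ik\cdot z}$; this accounts for the sign flip $\partial_l\partial_m=-\partial_{x_l}\partial_{y_m}$ and yields the displayed series. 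The evaluations \eqref{property derivatives 3} and \eqref{property derivatives 4} are then just the specialization $x=y$ in these representations.

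I do not anticipate any real obstacle: the only point requiring explicit verification is the summability of $|\tkj|^2 |k|^2$, which is handled in one line from \autoref{HP noise}; everything else is bookkeeping of signs stemming from the chain rule on $x-y$.
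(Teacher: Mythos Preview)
Your proposal is correct and follows the same approach as the paper: chain rule to relate the one-variable and two-variable derivatives, termwise differentiation of the Fourier series, and specialization to $x=y$. The paper's proof is terser and does not explicitly justify termwise differentiation, whereas you spell out the summability of $\lvert\tkj\rvert^2\lvert k\rvert^2$ from \autoref{HP noise}; this is harmless extra care (indeed the horizontal part is a finite sum thanks to the cutoff, and only the vertical part actually requires $\beta\ge4$).
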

\begin{proof}
$\left(\partial_{l} Q^{\eps}_0\right)(x-y)=\partial_{x_l} Q^{\eps}_0(x-y)$ and $\left(\partial_{l}\partial_m Q^{\eps}_0\right)(x-y)=-\partial_{x_l}\partial_{y_m}Q^{\eps}_0(x-y)$ by the chain rule. Then \eqref{property derivatives 1} and \eqref{property derivatives 2} follow from the definition of $Q^{\eps}_0(x-y).$
Setting $y=x$ in the right hand side of \eqref{property derivatives 1} (resp. \eqref{property derivatives 2}) we get immediately
\begin{align*}
     \left(\partial_{l} Q^{\eps}_0\right)(0)&=2i\sumkj \left\lvert\tkj\right\rvert^2 k_l a_{k,j}\otimes a_{k,j}\\ \text{(resp. } \left(\partial_{l}\partial_m Q^{\eps}_0\right)(0)&=-2\sumkj \left\lvert\tkj\right\rvert^2 k_l k_m a_{k,j}\otimes a_{k,j}\text{)}.
\end{align*}
\end{proof}
The other results we need on the covariance functions are strongly related to the mirror symmetry properties, therefore they hold true only for $Q^{\eps}_0(x), \overline{Q^{\eps}}(x),\ \left(Q^{\eps}\right)'(x),\ \overline{Q_T^{\eps}}(x),\ \overline{Q_R^{\eps}}(x)$. We give the detailed statement and proof only for $Q^{\eps}_0(x)$, the others being analogous.
\begin{lemma}\label{lemma covariance operator 2}
It holds
\begin{align}\label{property derivatives 5}
    \left(\partial_{l} Q^{\eps}_0\right)(0)&=2i\sumkj \left\lvert\tkj\right\rvert^2 k_l a_{k,j}\otimes a_{k,j}=0.
\end{align}
As a consequence, for each $l,m,n\in \{1,2,3\}$ and $f\in L^2(\T^3_{\eps}),\ g\in H^1(\T^3_{\eps})$ it holds
\begin{align}\label{mirror symmetry product 0}
    \sumkj \langle\partial_l g\skjcomp{l}, f\partial_m \smkjcomp{n}\rangle_{\eps}=0.
\end{align}
\end{lemma}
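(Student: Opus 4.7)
The plan is to dispatch \eqref{property derivatives 5} by a direct symmetry/pairing argument on the defining series, and then deduce \eqref{mirror symmetry product 0} from \eqref{property derivatives 5} by expanding the inner product in Fourier modes and recognizing the resulting scalar sum as (up to a non-zero complex factor) an entry of $(\partial_m Q^\eps_0)(0)$.

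For the first claim, start from the explicit series
\[
2i\sumkj \abs{\tkj}^2 k_l\, a_{k,j}\otimes a_{k,j},
\]
and split it into the two halves indexed by $\Gamma^\eps_+$ and $\Gamma^\eps_-$. The involution $k\mapsto -k$ is a bijection between these two halves; by the choice of basis $a_{-k,j}=a_{k,j}$ made in \autoref{subsect description noise}, and by the Hermitian condition \eqref{symmetry of tks}, one has $\abs{\theta^\eps_{-k,j}}^2=\abs{\tkj}^2$ and $a_{-k,j}\otimes a_{-k,j}=a_{k,j}\otimes a_{k,j}$, while the factor $k_l$ picks up a sign. Paired in this way, each term cancels exactly with its partner, giving the vanishing. (Equivalently, the mirror symmetry $Q^\eps_0(x)=Q^\eps_0(-x)$ in \eqref{mirror symmetric property 1} forces all odd derivatives at $0$ to vanish, a fact which makes the paired structure transparent.)

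For the consequence \eqref{mirror symmetry product 0}, I would plug in $\sigma^{l,\eps}_{k,j}=\tkj (a_{k,j})_l e^{ik\cdot x}$ and $\sigma^{n,\eps}_{-k,j}=\theta^\eps_{-k,j}(a_{-k,j})_n e^{-ik\cdot x}$, compute $\partial_m \sigma^{n,\eps}_{-k,j}=-ik_m \theta^\eps_{-k,j}(a_{-k,j})_n e^{-ik\cdot x}$, and observe that the two exponentials $e^{ik\cdot x}e^{-ik\cdot x}=1$ cancel inside the integral. Using $\theta^\eps_{-k,j}=(\tkj)^*$ and $a_{-k,j}=a_{k,j}$, the integrand reduces to $\partial_l g\cdot f$ multiplied by the constant
\[
-i\sumkj \abs{\tkj}^2 k_m (a_{k,j})_l (a_{k,j})_n,
\]
which is exactly $-\tfrac{1}{2}\bigl[(\partial_m Q^\eps_0)(0)\bigr]_{l,n}$ by \eqref{property derivatives 3}. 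By the first part this scalar vanishes, and \eqref{mirror symmetry product 0} follows upon pulling the (now zero) constant outside the integral $\int_{\T^3_\eps}\partial_l g\cdot f\, dx$.

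I do not expect any genuine obstacle here: the crux is the symmetry pairing argument in the first step, which hinges only on the three ingredients $a_{-k,j}=a_{k,j}$, $\theta^\eps_{-k,j}=(\tkj)^*$, and $k\mapsto -k$ flipping the sign of $k_l$. The only care required is bookkeeping in the second step, to make sure that the exponential factors cancel in the integrand so that $\partial_l g$ and $f$ can be pulled out of the sum over $(k,j)$ and the residual scalar sum identified with $(\partial_m Q^\eps_0)(0)$.
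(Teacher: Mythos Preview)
Your proposal is correct and follows essentially the same approach as the paper: the paper invokes the mirror symmetry \eqref{mirror symmetric property 1} to conclude that $Q^\eps_0$ is even so its first derivatives vanish at the origin (your pairing $k\leftrightarrow -k$ is precisely the computational content of this), and then expands the inner product in \eqref{mirror symmetry product 0} exactly as you do to reduce it to $\langle f,\partial_l g\rangle_\eps$ times an entry of $(\partial_m Q^\eps_0)(0)$. The only difference is presentational: the paper states the evenness argument in one line, while you spell out the cancellation explicitly.
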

\begin{proof}
Since by the mirror symmetry property of the covariance function \eqref{mirror symmetric property 1}, the function $Q^{\eps}_0(x)$ is even and smooth, then $\left(\partial_{l} Q^{\eps}_0\right)(0)=0$ and \eqref{property derivatives 5} follows. In order to complete the proof, let us expand the left hand side of \eqref{mirror symmetry product 0}, obtaining by definition of the $\skj$ above
\begin{align*}
\sumkj\langle \partial_l g\skjcomp{l}, f\partial_m \smkjcomp{n}\rangle_{\eps}=\langle f ,\partial_l g\rangle_{\eps}  \sumkj \left\lvert\tkj\right\rvert^2 k_m  (a_{k,j})_l (a_{k,j})_n=0
\end{align*}
due to \eqref{property derivatives 5}.
\end{proof}
On the contrary, as discussed in \cite{flandoli2023boussinesq}, the asymptotic behavior of $\nabla \overline{Q^{\eps}_{\rho}}(0)$ plays a crucial role in order to understand the alpha-term of our model. Therefore, the last result we provide in this section is a convergence property of the matrix 
\begin{align}\label{definition matrix aepsgamma}
    \{{\mathcal{R}_{\eps,\gamma}}^{m,n}\}_{m,n\in\{1,2\}}=\partial_{n}\overline{Q^{\eps,3,m}_{\rho}}(0)
\end{align}
and a uniform bound on $\nabla \overline{Q^{\eps}_{\rho}}(0)$.
\begin{lemma}\label{convergence properties matrix}
It holds 
\begin{align*}
    \norm{\nabla\overline{Q^{\eps}_{\rho}}(0)}_{HS}\leq \frac{2\sqrt{2}\lvert\rho\rvert \zeta^N_{H,\gamma/2} N^{2-\gamma/2} }{C_{1,H}C_{2,H}}. 
\end{align*}
In particular
\begin{align*}
\{{\mathcal{R}_{\eps,\gamma}}^{m,n}\}_{m,n\in\{1,2\}}=\partial_{n}\overline{Q^{\eps,3,m}_{\rho}}(0)= \frac{\pi\rho \zeta_{H,\gamma/2}} {C_{1,H}C_{2,H}}\begin{bmatrix} 
   0 & -1\\ 1 & 0   \end{bmatrix} N^{2-\gamma/2}+O(N^{1-\gamma/2}). 
\end{align*}
\end{lemma}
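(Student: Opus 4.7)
The plan is to compute $\overline{Q^{\eps}_{\rho}}$ entry by entry in Fourier using the explicit form of the coefficients, differentiate the resulting series termwise, and then read off both estimates from elementary symmetry and Riemann-sum arguments.

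First I would substitute the coefficients from \eqref{definition thetakj} and the vectors $a_{k,j}$ recalled before \eqref{definition thetakj} into the series for $\overline{Q^{\eps}_{\rho}}$ in \eqref{def covariance}. For each $k$ with $k_{3}=0$ and $N\le|k_{H}|\le 2N$, a direct case analysis on $\Gamma_{\pm}^{\eps}$ shows that the sign flip in $\tkjcomp{1}$ is exactly compensated by the flip $a_{-k,1}=-k_{H}^{\perp}/|k_{H}|$, giving the uniform identities
\begin{align*}
\tkjcomp{2}(\tkjcomp{1})^{*}\,a_{k,1} = -\,\frac{i\,k_{H}^{\perp}}{C_{1,H}C_{2,H}\,|k_{H}|^{\,2+\gamma/2}},\qquad (a_{k,1})_{3}=0,\qquad a_{k,2}=e_{3}.
\end{align*}
In particular the only nonzero entries of $\overline{Q^{\eps}_{\rho}}(x)$ are the $(3,h)$ and $(h,3)$ blocks for $h\in\{1,2\}$, and
\begin{align*}
\overline{Q^{\eps,3,m}_{\rho}}(x) = -\,\frac{2i\rho}{C_{1,H}C_{2,H}} \sum_{\substack{k_{H}\in\Z^{2}_{0}\\ N\le|k_{H}|\le 2N}} \frac{(k_{H}^{\perp})_{m}}{|k_{H}|^{\,2+\gamma/2}}\,e^{ik_{H}\cdot x_{H}}.
\end{align*}

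Next I would differentiate termwise in $x_{n}$, which multiplies each mode by $ik_{n}$. Since every $k$ in the sum has $k_{3}=0$, the vertical derivative vanishes; for $n\in\{1,2\}$ evaluation at $x=0$ yields
\begin{align*}
{\mathcal{R}_{\eps,\gamma}}^{m,n} = \frac{2\rho}{C_{1,H}C_{2,H}} \sum_{N\le|k_{H}|\le 2N} \frac{(k_{H}^{\perp})_{m}\,(k_{H})_{n}}{|k_{H}|^{\,2+\gamma/2}}.
\end{align*}
The diagonal sums $m=n$ vanish by the reflection $k_{i}\mapsto -k_{i}$, which flips the integrand while preserving $|k_{H}|$. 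For $m\ne n$ the swap $k_{1}\leftrightarrow k_{2}$ together with the definition of $k_{H}^{\perp}$ turns the numerator into $\pm\tfrac{1}{2}|k_{H}|^{2}$ on the annulus, producing the announced antisymmetric matrix structure with an explicit scalar proportional to $\sum|k_{H}|^{-\gamma/2}$.

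Finally I would translate these two scalar sums into the quantities $\zeta_{H,\gamma/2}^{N}$ and $\zeta_{H,\gamma/2}$. The Hilbert–Schmidt bound follows from the pointwise estimate $|(k_{H}^{\perp})_{m}(k_{H})_{n}|\le|k_{H}|^{2}$, which bounds each of the (at most four) nonzero scalar entries of $\nabla\overline{Q^{\eps}_{\rho}}(0)$ in modulus by $\frac{2|\rho|}{C_{1,H}C_{2,H}}\sum|k_{H}|^{-\gamma/2}$; recalling $\sum_{N\le|k_{H}|\le 2N}|k_{H}|^{-\gamma/2}=\zeta_{H,\gamma/2}^{N}N^{2-\gamma/2}$ and summing in quadrature gives the claimed inequality. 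For the leading asymptotic identification of ${\mathcal{R}_{\eps,\gamma}}^{m,n}$ I would invoke the Riemann-sum approximation stated just after the definition of $\zeta_{H,j}$, namely $\sum_{N\le|k_{H}|\le 2N}|k_{H}|^{-\gamma/2}=\zeta_{H,\gamma/2}N^{2-\gamma/2}+O(N^{1-\gamma/2})$, and substitute. The whole argument is pure Fourier bookkeeping; the only mildly delicate point is the sign matching across $\Gamma_{\pm}^{\eps}$, which is taken care of once and for all by the uniform identity for $\tkjcomp{2}(\tkjcomp{1})^{*}a_{k,1}$ established in the first step.
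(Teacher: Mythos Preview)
Your approach is correct and, for the identification of $\mathcal{R}_{\eps,\gamma}$, a bit more elementary than the paper's. The paper writes $\partial_{\cdot}\overline{Q^{\eps,3,\cdot}_{\rho}}(0)$ as the tensor-valued Riemann sum $\frac{2\rho}{C_{1,H}C_{2,H}}\sum_{k_H}|k_H|^{-\gamma/2}\,\frac{k_H^{\perp}}{|k_H|}\otimes\frac{k_H}{|k_H|}$ and then evaluates the limiting integral $\int_{1\le|x|\le 2}|x|^{-\gamma/2-2}\,x^{\perp}\otimes x\,dx$ in polar coordinates to get $\tfrac{\zeta_{H,\gamma/2}}{2}\bigl[\begin{smallmatrix}0&-1\\1&0\end{smallmatrix}\bigr]$. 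You instead exploit the discrete lattice symmetries directly (the reflections $k_i\mapsto -k_i$ kill the diagonal entries, and the swap $k_1\leftrightarrow k_2$ gives $\sum k_1^2/|k_H|^{2+\gamma/2}=\sum k_2^2/|k_H|^{2+\gamma/2}=\tfrac12\sum|k_H|^{-\gamma/2}$), reducing everything to the single scalar sum before invoking the Riemann approximation. This buys you the exact identity $\mathcal{R}_{\eps,\gamma}=\frac{\rho}{C_{1,H}C_{2,H}}\,\zeta^N_{H,\gamma/2}N^{2-\gamma/2}\bigl[\begin{smallmatrix}0&-1\\1&0\end{smallmatrix}\bigr]$ for every $N$, which is slightly sharper. (Both routes produce the prefactor $\rho/(C_{1,H}C_{2,H})$; the extra $\pi$ in the displayed statement is a typo, as one checks against the definition of $\mathcal{R}_{\gamma}$ below \eqref{limit solution B3}.)

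There is one small numerical slip in your Hilbert--Schmidt step. The crude pointwise bound $|(k_H^{\perp})_m(k_H)_n|\le|k_H|^2$ only controls each of the four nonzero entries by $\frac{2|\rho|}{C_{1,H}C_{2,H}}\sum|k_H|^{-\gamma/2}$, so summing in quadrature gives the constant $4$, not $2\sqrt{2}$. The paper gets $2\sqrt{2}$ by applying the triangle inequality at the tensor level and computing $\bigl\lVert e_3\otimes\frac{k_H^{\perp}}{|k_H|}\otimes\frac{k_H}{|k_H|}-\frac{k_H^{\perp}}{|k_H|}\otimes e_3\otimes\frac{k_H}{|k_H|}\bigr\rVert_{HS}=\sqrt{2}$. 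You can repair this at no cost: since your symmetry argument already gives the \emph{exact} value of each nonzero entry as $\frac{|\rho|}{C_{1,H}C_{2,H}}\sum|k_H|^{-\gamma/2}$, summing those four in quadrature yields $\frac{2|\rho|}{C_{1,H}C_{2,H}}\zeta^N_{H,\gamma/2}N^{2-\gamma/2}$, which is in fact sharper than the stated bound.
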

\begin{proof}
    Thanks to \eqref{property derivatives 3} and \autoref{remark coefficients } it holds
    \begin{align}\label{uniform bound step 1}
    \nabla\overline{Q^{\eps}_{\rho}}(0)=\frac{2\rho}{C_{1,H}C_{2,H}}\sumH \frac{1}{\lvert k_H\rvert^{\gamma/2}}  \left(e_3 \otimes\frac{k_H^{\perp}}{\lvert k_H\rvert}\otimes  \frac{k_H}{\lvert k_H\rvert}-\frac{k_H^{\perp}}{\lvert k_H\rvert}\otimes e_3\otimes \frac{k_H}{\lvert k_H\rvert}\right).  
    \end{align}
Therefore we have
\begin{align}
    \norm{\nabla\overline{Q^{\eps}_{\rho}}(0)}_{HS}& \leq \frac{2\lvert\rho\rvert}{C_{1,H} C_{2,H}}\sumH \frac{1}{\lvert k_H\rvert^{\gamma/2}}\norm{\left(e_3 \otimes\frac{k_H^{\perp}}{\lvert k_H\rvert}\otimes  \frac{k_H}{\lvert k_H\rvert}-\frac{k_H^{\perp}}{\lvert k_H\rvert}\otimes e_3\otimes \frac{k_H}{\lvert k_H\rvert}\right)}_{HS}\notag \\ &= \frac{2\sqrt{2}\lvert\rho\rvert \zeta^N_{H,\gamma/2} N^{2-\gamma/2} }{C_{1,H}C_{2,H}}.
\end{align}
Due to \eqref{uniform bound step 1}, it holds
    \begin{align}\label{step 1 convergence matrix}
\partial_{\cdot}\overline{Q^{\eps,3,\cdot}_{\rho}}(0)&=\frac{2\rho }{C_{1,H}C_{2,H}}\sumH \frac{1}{\lvert k_H\rvert^{\gamma/2}}  \frac{k_H^{\perp}}{\lvert k_H\rvert} \otimes \frac{k_H}{\lvert k_H\rvert}\notag\\ & =\frac{2\rho }{C_{1,H}C_{2,H}N^{\gamma/2-2}}\left(\frac{1}{N^2}\sum_{\substack{k_H\in\Z^2_0\\ 1\leq \frac{\lvert k_H\rvert}{N}\leq 2}} \frac{N^{\gamma/2}}{\lvert k_H\rvert^{\gamma/2}}  \frac{k_H^{\perp}}{\lvert k_H\rvert} \otimes \frac{k_H}{\lvert k_H\rvert}\right).
    \end{align}
    The function $f_{\gamma}(x)=\frac{1}{\lvert x\rvert^{\gamma/2+2}} (x^{\perp}\otimes x) $ is smooth for $1\leq \lvert x \rvert \leq 2$, hence the Riemann sums satisfy
    \begin{align}\label{riemann sum convergence}
        \frac{1}{N^2}\sum_{\substack{k_H\in\Z^2_0\\ 1\leq \frac{\lvert k_H\rvert}{N}\leq 2}} \frac{N^{\gamma/2}}{\lvert k_H\rvert^{\gamma/2}}  \frac{k_H^{\perp}}{\lvert k_H\rvert} \otimes \frac{k_H}{\lvert k_H\rvert}=\int_{1\leq x\leq 2} f_\gamma(x) dx+O(N^{-1}).
    \end{align}
    Since we have
    \begin{align}\label{step 2 convergence matrix}
        \int_{1\leq x\leq 2} f_\gamma(x) dx=\int_1^2 \int_0^{2\pi}  \frac{1}{r^{\gamma/2-1}}\begin{bmatrix}
        -\sin(\theta)\cos(\theta) & -\sin^2(\theta)  \\
        \cos^2(\theta) & \sin(\theta)\cos(\theta)
        \end{bmatrix} dr d\theta=\frac{\zeta_{H,\gamma/2}}{2}\begin{bmatrix} 
   0 & -1\\ 1 & 0   \end{bmatrix} ,
    \end{align}
    combining \eqref{step 1 convergence matrix}, \eqref{riemann sum convergence} and \eqref{step 2 convergence matrix} the thesis follows.
\end{proof}
\begin{remark}\label{rmk: noise motivation}
   We highlight that our noise is an adaptation of the one constructed in \cite{flandoli2023boussinesq} for a 2D-3C model, where a certain helicity of the noise is imposed to achieve an alpha-term in the limit. In that work the authors constructed a random field based on the notion of a `cylinder of vorticity'. A cylinder of vorticity is for them a special 3D vector field $V(x)$ on $\T^2$ that is decomposed as $V_H$, an horizontal component corresponding ideally to a point vortex, and $V_3 e_3$ a vertical component, with the property that $V_H$ is an odd function of $x$, while $V_3$ is even. By randomizing the position of this cylinder, they were able to use the covariance matrix of the field to define a Brownian motion. While we could not give the same description here, since we needed to express the noise in the Fourier basis to exploit crucial simplifications (cfr. \autoref{remark fourier decomposition and splitting products}), our noise retains all the properties of the one used in that work. In particular, by splitting our noise as $W^\eps(t,x)= \overline{W^\eps}(t,x) + \left( W^\eps(t,x)\right)' $, where the first part comprises all the wavenumbers with $k_3=0$, and recalling the further orthogonal decomposition $\overline{W^\eps}(t,x) = \overline{W^{\eps,H}}(t,x) + \overline{W^{\eps,3}}(t,x)e_3 $, we see that this part of the noise correspond exactly to theirs. Indeed, in order to have a nonzero `off diagonal' term $\overline{Q^\eps_{\rho}}(0)$ in the covariance, we had to introduce a correlation between the horizontal and the vertical component, which was inherent in the structure of the cylinder, and breaks mirror symmetry for the horizontal component, which we did with the special choice of the coefficients, that is, we made it so that $\overline{W^{\eps,H}}$ is an odd function of $x$, while $\overline{W^{\eps,3}}$ is even, exactly like in the cylinder case.
   Finally, it is easy to check that our main assumption on the decay of the noise coefficients ($\beta, \gamma \ge 4$, cfr. \autoref{HP noise}) is equivalent to the main assumption of that work, that is, a uniform control on the second derivatives of the covariance matrix, see \cite[Hypothesis 3.3]{flandoli2023boussinesq}.  In \autoref{appB} we compute the mean helicity of our noise and stress its relation with the strength of the alpha-term in the limit PDE. 
\end{remark}

\subsection{Main Results}\label{sec main result}
Following the ideas introduced in \autoref{sec introduction}, we fix $T\in (0,+\infty)$ and we are interested in studying the properties of the following stochastic model on $\T^3_{\eps}$:
\begin{equation}\label{introductory equation stratonovich}
    \begin{cases}
        dB^{\eps}_{t}&=\eta \Delta B^{\eps}_{t}  dt + \sum_{\substack{k\in \Z^{3,N}_0\\ j\in \{1,2\}}}\mathcal{L}_{\skj}B^{\eps}_t  \circ dW^{k,j}_t, \\
        \operatorname{div}B^{\eps}_t&=0, \\
        B^{\eps}_t|_{t=0}&=B^{\eps}_0,
    \end{cases}
\end{equation}
with the coefficients $\skj$ defined in \autoref{subsect description noise}. We start rewriting it in It\^o form. First, let us observe that for each $k\in\Z^{3,N}_0,\ j\in \{1,2\}$ we have due to the linearity of $\mathcal{L}_{\skj}$
\begin{align*}
\mathcal{L}_{\skj}B^{\eps}_t  \circ dW^{k,j}_t&=\mathcal{L}_{\skj}B^{\eps}_t dW^{k,j}_t+\frac{1}{2}\left[ \mathcal{L}_{\skj}B^{\eps}_\cdot, W^{k,j}_\cdot\right]_t \\ & = \mathcal{L}_{\skj}B^{\eps}_t dW^{k,j}_t+\frac{1}{2}\mathcal{L}_{\skj}\left[ B^{\eps}_\cdot, W^{k,j}_\cdot\right]_t\\ & =\mathcal{L}_{\skj}B^{\eps}_t dW^{k,j}_t+\mathcal{L}_{\skj}\mathcal{L}_{\smkj}B^{\eps}_t dt+\rho\one_{\{k_3=0,\ l\neq j\}} \mathcal{L}_{\skj}\mathcal{L}_{\smk{l}}B^{\eps}_t dt.
\end{align*}
Therefore the equation for $B^{\eps}_t$ can be rewritten as
\begin{align*}
    \begin{cases}
        dB^{\eps}_{t}&=\left(\eta \Delta+\sumkj \mathcal{L}_{\skj}\mathcal{L}_{\smkj}+\rho\sumkmeancov \mathcal{L}_{\sk{1}}\mathcal{L}_{\smk{2}}+\mathcal{L}_{\sk{2}}\mathcal{L}_{\smk{1}} \right)  B^{\eps}_{t}  dt\\ & + \sum_{\substack{k\in \Z^{3,N}_0\\ j\in \{1,2\}}}\mathcal{L}_{\skj}B^{\eps}_t   dW^{k,j}_t, \\
        \operatorname{div}B^{\eps}_t&=0, \\
        B^{\eps}_t|_{t=0}&=B^{\eps}_0.
    \end{cases}    
\end{align*}
We are left to study $\sumkj \mathcal{L}_{\skj}\mathcal{L}_{\smkj}+\rho\sumkmeancov \mathcal{L}_{\sk{1}}\mathcal{L}_{\smk{2}}+\mathcal{L}_{\sk{2}}\mathcal{L}_{\smk{1}} $. Its analysis exploits strongly the properties of the covariance function of our noise, i.e. translation invariance and (almost) mirror symmetry and it is contained in the following Lemma. Even if its proof is quite standard, see \cite[Section 3.4.1]{FlaLuoWaseda} we prefer to include it in \autoref{appendix ito strat corr} for the convenience of the reader.
\begin{lemma}\label{lemma ito strat corrector}
If $F:\T^3_{\eps}\rightarrow\R^3$ is a smooth, zero mean divergence free vector field, then it holds   
\begin{align*}
    \sumkj \mathcal{L}_{\skj}\mathcal{L}_{\smkj} F+\rho\sumkmeancov \left(\mathcal{L}_{\sk{1}}\mathcal{L}_{\smk{2}}+\mathcal{L}_{\sk{2}}\mathcal{L}_{\smk{1}}\right)F&=\Lambda^{\eps}F+ \Lambda_{\rho}^{\eps}F,\quad  
\end{align*}
where in the formula above we denoted by $\Lambda^{\eps}$ the differential operator 
\begin{align*}
\Lambda^{\eps}F=\eta^{\eps}_T(\partial^2_{11}+\partial^2_{22})F+\eta^{\eps}_R\partial^2_{33}F
\end{align*}
and by $\Lambda_{\rho}^{\eps}$ the differential operator 
\begin{align*}
\Lambda_{\rho}^{\eps}F=-\sum_{l\in \{1,2\}} \partial_l \overline{Q^{\eps}_{\rho}}(0)\cdot\nabla F_l=-\sum_{\substack{j\in \{1,2,3\}\\l\in \{1,2\}}} \partial_l \overline{Q^{\eps,\cdot, j}_{\rho}}(0)\partial_jF_l.
\end{align*}
\end{lemma}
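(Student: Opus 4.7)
The plan is to perform a direct Fourier-basis computation of the two classes of second-order operators on the left-hand side, exploiting the explicit form $\skj = \tkj a_{k,j} e^{ik\cdot x}$ together with the orthogonality $a_{k,j}\cdot k = 0$, which holds in every case including $k_3 = 0$ thanks to the special choice $a_{k,1}\perp k_H$, $a_{k,2} = e_3$.

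First I would evaluate one Lie derivative, obtaining
\begin{align*}
\mathcal{L}_{\smkj}F \;=\; \theta_{-k,j}^{\eps}\, e^{-ik\cdot x}\bigl[(a_{k,j}\cdot\nabla)F + i(k\cdot F)\, a_{k,j}\bigr],
\end{align*}
and then apply $\mathcal{L}_{\skj}=\skj\cdot\nabla(\cdot)-(\cdot)\cdot\nabla\skj$. The two exponentials collapse into $|\tkj|^2$, the derivative of the phase vanishes because $a_{k,j}\cdot k=0$, and the two first-order tensor residuals (one coming from $\skj\cdot\nabla$ acting on $i(k\cdot F)a_{k,j}$, the other from $-(\,\cdot\,)\cdot\nabla\skj$) cancel pointwise, yielding
\begin{align*}
\mathcal{L}_{\skj}\mathcal{L}_{\smkj}F \;=\; |\tkj|^{2}\,(a_{k,j}\cdot\nabla)^{2}F.
\end{align*}
Summing over $(k,j)\in\Z^{3,\eps^{-1}}_0\times\{1,2\}$ produces a constant-coefficient second-order operator whose coefficient matrix is, by the very definition \eqref{def covariance}, a constant multiple of $Q^{\eps}_0(0)$. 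Inserting the diagonal decomposition $Q^{\eps}_0(0) = \overline{Q^{\eps}_T}(0) + \overline{Q^{\eps}_R}(0) + \left(Q^{\eps}\right)'(0)$ and reading off the diagonal entries from the computations already carried out in \autoref{subsect description noise} produces $\Lambda^{\eps}F$.

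For the $\rho$-correlated correction, the same expansion applied to $\mathcal{L}_{\sk{1}}\mathcal{L}_{\smk{2}}F + \mathcal{L}_{\sk{2}}\mathcal{L}_{\smk{1}}F$ (restricted to $k_3 = 0$) produces a second-order part proportional to the symmetric combination
\begin{align*}
\tkjcomp{1}\theta_{-k,2}^{\eps} + \tkjcomp{2}\theta_{-k,1}^{\eps} \;=\; 2\re\bigl(\tkjcomp{1}(\tkjcomp{2})^{*}\bigr),
\end{align*}
which vanishes identically because \eqref{definition thetakj} makes $\tkjcomp{1}$ purely imaginary and $\tkjcomp{2}$ real when $k_3=0$. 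What survives is a first-order operator with coefficient $\tkjcomp{1}\theta_{-k,2}^{\eps}-\tkjcomp{2}\theta_{-k,1}^{\eps}=2\tkjcomp{1}\tkjcomp{2}$, and, using $a_{k,2}=e_3$, $k_3=0$ and $\operatorname{div} F=0$ to simplify the surviving contractions, one identifies this residual term, component by component, with the Fourier representation of $-\sum_{l\in\{1,2\}}\partial_l\overline{Q^{\eps}_\rho}(0)\cdot\nabla F_l = \Lambda_\rho^{\eps}F$ via the analogue of \eqref{property derivatives 3}.

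The main obstacle is purely bookkeeping: keeping track of indices while systematically exploiting the three algebraic inputs $a_{k,j}\cdot k=0$, $\theta_{-k,j}^{\eps}=(\tkj)^{*}$, and the purely-imaginary/real phase structure \eqref{definition thetakj}. There is no analytic subtlety; once the algebra is organized, everything reduces to matching entries of $Q^{\eps}_0(0)$ and $\nabla\overline{Q^{\eps}_\rho}(0)$ that are already tabulated in \autoref{subsect description noise}, which is why the detailed execution is relegated to \autoref{appendix ito strat corr}.
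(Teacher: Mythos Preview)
Your computation is correct and actually somewhat cleaner than the paper's. The key identity
\[
\mathcal{L}_{\skj}\mathcal{L}_{\smkj}F \;=\; |\tkj|^{2}\,(a_{k,j}\cdot\nabla)^{2}F
\]
holds \emph{mode by mode}: the two first-order residuals cancel pointwise for each fixed $k$ precisely because $a_{k,j}\cdot k=0$, so no summation argument is needed at this stage. The paper instead writes $\sumkj\mathcal{L}_{\skj}\mathcal{L}_{\smkj}F=\operatorname{div}\bigl(Q^{\eps}_0(0)\nabla F\bigr)-R$, expands $R$ into pieces $R_1$ (cross terms between transport and stretching) and $R_2$ (second derivatives of the coefficients), and kills $R_1$ only \emph{after} summation in $k$ via the mirror-symmetry identity $\partial_l Q^{\eps}_0(0)=0$ (\autoref{lemma covariance operator 2}), while $R_2$ vanishes because the relevant tensor is constant. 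Your route bypasses this bookkeeping by exploiting $a_{k,j}\perp k$ directly, which is possible only because the noise is written explicitly in the Fourier eigenbasis; the paper's decomposition is phrased so that it would extend to any translation-invariant, mirror-symmetric covariance.

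For the $\rho$-correlated part the two arguments also differ. You observe that the second-order contribution is proportional to $2\Re\bigl(\tkjcomp{1}(\tkjcomp{2})^*\bigr)$, which vanishes by the imaginary/real phase choice in \eqref{definition thetakj}, and then match the surviving first-order piece with $\nabla\overline{Q^{\eps}_\rho}(0)$. The paper instead uses that $\overline{Q^{\eps}_\rho}(0)=0$ to kill the second-order part, and then reorganizes the remainder $\tilde R$ via the same $R_2$-type argument before identifying it with $\Lambda^{\eps}_\rho F$ through \eqref{property derivatives 3}. One minor remark: your invocation of $\operatorname{div}F=0$ in the last step is not actually needed---the identification with $\Lambda^{\eps}_\rho F$ goes through without it, as the paper's proof confirms.
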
    
Thanks to \autoref{lemma ito strat corrector}, equation \eqref{introductory equation stratonovich} can be rewritten in It\^o form as:
\begin{equation}\label{Introductory equation Ito}
    \begin{cases}
        dB^{\eps}_{t}&=\left(\eta \Delta+\Lambda^{\eps}+\Lambda^{\eps}_{\rho}\right)B^{\eps}_{t}  dt + \sum_{\substack{k\in \Z^{3,N}_0\\ j\in \{1,2\}}}\mathcal{L}_{\skj}B^{\eps}_t   dW^{k,j}_t, \\
        \operatorname{div}B^{\eps}_t&=0, \\
        B^{\eps}_t|_{t=0}&=B^{\eps}_0,
    \end{cases}
\end{equation}
We give now the definition of weak solution for the Stochastic equation of the magnetic field  \eqref{Introductory equation Ito}.

\begin{definition}\label{well posed def}
A stochastic process \begin{align*}
    B^{\eps}\in C_{\mathcal{F}}([0,T];\mathbf{L}^2_{\eps})\cap L^2_{\mathcal{F}}([0,T];\mathbf{H}^1_{\eps})
\end{align*} is a weak solution of equation \eqref{Introductory equation Ito} if, for every $\phi\in \mathbf{H}^2_{\eps}$, we have
\begin{align}\label{weak formulation full system}
    \langle B^{\eps}_t,\phi\rangle_{\eps}
    &=  \langle B^{\eps}_0,\phi\rangle_{\eps}+ \eta \int_0^t  \langle B^{\eps}_s,\Delta\phi\rangle_{\eps} \, ds+\int_0^t  \langle B^{\eps}_s,\Lambda^{\eps}\phi\rangle_{\eps}\, ds+\int_0^t  \langle \Lambda^{\eps}_{\rho}B^{\eps}_s,\phi\rangle_{\eps}\, ds+\sum_{\substack{k\in \Z^{3,N}_0\\ j\in \{1,2\}}}\int_0^t\langle \mathcal{L}_{\skj}B^{\eps}_s,\phi\rangle_{\eps}   dW^{k,j}_s.
\end{align}
for every $t\in [0,T],\ \mathbb{P}-a.s.$
\end{definition}

Due to the fact that equation \eqref{Introductory equation Ito} is linear, the existence and uniqueness of solutions in the sense of \autoref{well posed def} is a standard fact which follows by the abstract theory of \cite[Chapters 3-5]{Flandoli_Book_95}, see also \cite[Section 3.1]{flandoli2022heat}. Indeed the following holds.

\begin{theorem}\label{thm well posed}
    For each $B^{\eps}_0\in \mathbf{L}^2_{\eps}$ there exists a unique weak solution of system \eqref{Introductory equation Ito} in the sense of \autoref{well posed def}.
\end{theorem}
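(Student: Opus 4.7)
My plan is to verify the hypotheses of the standard variational theory for linear stochastic evolution equations (see \cite[Chapters 3-5]{Flandoli_Book_95}) on the Gelfand triple $\mathbf{H}^1_\eps \hookrightarrow \mathbf{L}^2_\eps \hookrightarrow \mathbf{H}^{-1}_\eps$. First I would recast \eqref{Introductory equation Ito} as $dB^\eps_t = \mathcal{A} B^\eps_t\, dt + \mathcal{B}(B^\eps_t)\, dW_t$, with $\mathcal{A} := \eta \Delta + \Lambda^\eps + \Lambda^\eps_\rho$ a continuous linear map $\mathbf{H}^1_\eps \to \mathbf{H}^{-1}_\eps$, and $\mathcal{B}$ the linear operator sending $B$ to the Hilbert--Schmidt-valued family $\{\mathcal{L}_{\sk{j}} B\}_{k,j}$ acting on the cylindrical noise generated by the $W^{k,j}$. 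Since each $\sk{j}$ is divergence-free, both $\mathcal{A}$ and $\mathcal{B}$ preserve the solenoidal constraint, so the analysis takes place entirely inside the divergence-free spaces.

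The decisive estimate is the coercivity inequality
\begin{align*}
2\langle \mathcal{A} B, B\rangle_\eps + \sumkj \norm{\mathcal{L}_{\sk{j}} B}^2_\eps \leq -\lambda \norm{B}^2_{\mathbf{H}^1_\eps} + C \norm{B}^2_\eps, \qquad B \in \mathbf{H}^1_\eps,
\end{align*}
for some $\lambda > 0$. The contribution $\langle \eta \Delta B, B\rangle_\eps = -\eta \norm{\nabla B}^2_\eps$ supplies the dissipation; $\langle \Lambda^\eps B, B\rangle_\eps \le 0$ follows from the constant-coefficient diagonal form of $\Lambda^\eps$ in \autoref{lemma ito strat corrector}; and the first-order operator $\Lambda^\eps_\rho$ is absorbed via Young's inequality into $\eta\norm{\nabla B}^2_\eps$ and $\norm{B}^2_\eps$. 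The delicate piece is the noise variance $\sumkj \norm{\mathcal{L}_{\sk{j}} B}^2_\eps$; expanding it with the Fourier representation of $\sk{j}$ and the decay \eqref{definition thetakj}, one sees that its leading contribution scales like $(\zeta_{H,0}/C_{1,H}^2)\norm{\nabla B}^2_\eps$, and the quantitative hypothesis $C_{1,H} > \sqrt{\zeta_{H,0}/\eta}$ from \autoref{HP noise} is exactly what makes this strictly dominated by $\eta\norm{\nabla B}^2_\eps$, yielding a strictly positive residual $\lambda$.

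With coercivity in hand, I would run the usual Galerkin scheme based on the Fourier eigenbasis $\{a_{k,j}e^{ik\cdot x}/(2\pi\sqrt{2\pi\eps})\}$: truncating to modes $|k|\le n$ produces a linear It\^o SDE in a finite-dimensional divergence-free space, hence globally well-posed, and applying It\^o's formula to $\norm{B^{\eps,n}_t}^2_\eps$ together with the coercivity inequality gives the uniform bound
\begin{align*}
\EE\bra{\sup_{t\in[0,T]} \norm{B^{\eps,n}_t}^2_\eps + \int_0^T \norm{B^{\eps,n}_t}^2_{\mathbf{H}^1_\eps}\, dt} \leq C\norm{B^\eps_0}^2_\eps.
\end{align*}
Extracting a weak/weak-$\ast$ convergent subsequence in $L^2_{\mathcal{F}}(0,T;\mathbf{H}^1_\eps) \cap L^\infty_{\mathcal{F}}(0,T;\mathbf{L}^2_\eps)$ and passing to the limit in the stochastic integral via the It\^o isometry, which is straightforward since $\mathcal{B}$ is linear in $B$, yields a solution in the sense of \autoref{well posed def}. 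Uniqueness is then routine from linearity: the difference $Z$ of two solutions with $Z_0=0$ satisfies $\EE\norm{Z_t}^2_\eps \leq C\int_0^t \EE\norm{Z_s}^2_\eps\, ds$ by the same coercivity estimate, hence $Z \equiv 0$ by Gr\"onwall. The main, albeit routine, obstacle is the coercivity step: the stretching piece $-B\cdot\nabla \sk{j}$ inside the Lie derivative breaks the antisymmetry a pure transport noise would enjoy, so one has to verify at the Fourier level that, under precisely the hypothesis on $C_{1,H}$, the combined drift-plus-noise second-order contribution still retains a definite fraction of the Laplacian dissipation rather than draining it.
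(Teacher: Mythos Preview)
Your approach is exactly what the paper intends: the paper does not prove this theorem at all but simply cites the abstract variational theory of \cite[Chapters 3--5]{Flandoli_Book_95}, and you have correctly outlined how to verify its hypotheses (Gelfand triple, coercivity, Galerkin approximation, linear uniqueness via Gr\"onwall).

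One point of your analysis is, however, miscalibrated. You claim that the noise variance $\sumkj\norm{\mathcal{L}_{\skj}B}_{\eps}^{2}$ has leading contribution $(\zeta_{H,0}/C_{1,H}^{2})\norm{\nabla B}_{\eps}^{2}$ and that the hypothesis $C_{1,H}>\sqrt{\zeta_{H,0}/\eta}$ is what makes coercivity work. This is not the right picture. The transport part $\sumkj\norm{\skj\cdot\nabla B}_{\eps}^{2}$ equals exactly $-\langle\Lambda^{\eps}B,B\rangle_{\eps}$ and cancels against the It\^o--Stratonovich corrector in the drift; what remains is the stretching part $\sumkj\norm{B\cdot\nabla\skj}_{\eps}^{2}$, which for fixed $\eps$ is bounded by $C_{N}\norm{B}_{\eps}^{2}$ with $C_{N}\sim N^{2}\zeta_{H,0}/C_{1,H}^{2}$. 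This is a \emph{zeroth}-order term, not a gradient term, so coercivity at fixed $\eps$ holds trivially (with an $\eps$-dependent constant) and does \emph{not} require the hypothesis on $C_{1,H}$. That hypothesis enters only later, in \autoref{section a priori estimates}, where one needs estimates \emph{uniform in $\eps$}: there the fluctuation ${B^{\eps}}'$ enjoys the thin-layer Poincar\'e inequality \eqref{poincare thin layer}, converting the stretching term into $(\zeta_{H,0}/C_{1,H}^{2})\norm{\nabla{A^{\eps}}'}_{\eps}^{2}$, which must then be absorbed by the dissipation. Also note that because the $W^{k,j}$ are correlated for $k_{3}=0$, the quadratic variation carries additional $\rho$-cross terms beyond $\sumkj\norm{\mathcal{L}_{\skj}B}_{\eps}^{2}$; these are lower order and easily controlled by Cauchy--Schwarz, but should be mentioned. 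None of this breaks your argument---the coercivity you need does hold---but the role of the structural hypothesis is different from what you state.
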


We are now ready to introduce the mean and the fluctuation in the third component of the process $B^{\eps}$ which are the main objects of our analysis. Therefore, let \begin{align*}
    \mB{t}:=M_{\eps}B^{\eps}_t,\ \oB{t}:=N_{\eps}B^{\eps}_t.
\end{align*}
More in general, in order to simplify the notation, in the following we will denote the projection operator $M_{\eps}$ (resp. $N_{\eps}$) by $\overline{\cdot}$ (resp. $\cdot'$) forgetting the dependence from $\eps$.
Thanks to \autoref{thm well posed} and the fact that $M_{\eps}$ and $N_{\eps}$ are projection operator and commute with the operator $\operatorname{div}$, see \autoref{properties of M N}, we have in particular that
\begin{align*}
\overline{B^{\eps}}\in L^2_{\mathcal{F}}(0,T;\mathbf{H}^1_{\eps})\cap C_{\mathcal{F}}(0,T;\mathbf{L}^2_{\eps}),\quad {{B}^{\eps}}'\in L^2_{\mathcal{F}}(0,T;\mathbf{H}^1_{\eps})\cap C_{\mathcal{F}}(0,T;\mathbf{L}^2_{\eps}).\end{align*}
For each $\phi\in \mathbf{H}^2_{\eps}$, considering $\overline{\phi}$ and $\phi'$ as test function in \eqref{weak formulation full system} we obtain that $\mB{t}$ and $\oB{t}$ satisfies
\begin{align}\label{weak formulation mean}
    \langle \mB{t},\phi\rangle_{\eps}
    &=  \langle \mB{0},\phi\rangle_{\eps}+ \eta \int_0^t  \langle \mB{s},\Delta\phi\rangle_{\eps} \, ds+\int_0^t  \langle \mB{s},\Lambda^{\eps}\phi\rangle_{\eps}\, ds+\int_0^t  \langle \Lambda^{\eps}_{\rho}\mB{s},\phi\rangle_{\eps}\, ds\notag\\ &+\sumkmeanj\int_0^t\langle\LL_{\skj}\mB{s},\phi\rangle_{\eps}   dW^{k,j}_s + \sumkoscj\int_0^t\langle\overline{\LL_{\skj}\oB{s}},\phi\rangle_{\eps}   dW^{k,j}_s,
\end{align}
\begin{align}\label{weak formulation oscj}
    \langle \oB{t},{\phi}\rangle_{\eps}
    &=  \langle \oB{0},{\phi}\rangle_{\eps}+ \eta \int_0^t  \langle \oB{s},\Delta{\phi}\rangle_{\eps} \, ds+\int_0^t  \langle \oB{s},\Lambda^{\eps}{\phi}\rangle_{\eps}\, ds+\int_0^t  \langle \Lambda^{\eps}_{\rho}\oB{s},{\phi}\rangle_{\eps}\, ds\notag\\ &+\sumkmeanj\int_0^t\langle\LL_{\skj}\oB{s},{\phi}\rangle_{\eps}   dW^{k,j}_s +\sumkoscj\int_0^t\langle\LL_{\skj}\mB{s},{\phi}\rangle_{\eps}   dW^{k,j}_s\notag\\ &+ \sumkoscj\int_0^t\langle\left(\LL_{\skj}\oB{s}\right)',{\phi}\rangle_{\eps}   dW^{k,j}_s
\end{align}
for every $t\in [0,T],\ \mathbb{P}-a.s.$ Namely $\mB{t}$ and $\oB{t}$ are weak solutions of the following SPDEs:
\begin{equation}\label{Introductory equation Ito mean}
    \begin{cases}
        d\mB{t}&=\left(\eta \Delta+\Lambda^{\eps}+\Lambda^{\eps}_{\rho}\right)\mB{t} dt + \sumkmeanj\LL_{\skj}\mB{t}   dW^{k,j}_t\\ & +\sumkoscj\overline{\LL_{\skj}\oB{t}}  dW^{k,j}_t, \\
        \operatorname{div}\mB{t}&=0, \\
        \mB{t}|_{t=0}&=\mB{0},
    \end{cases}
\end{equation}
\begin{equation}\label{Introductory equation Ito oscillation}
    \begin{cases}
        d\oB{t}&=\left(\eta \Delta+\Lambda^{\eps}+\Lambda^{\eps}_{\rho}\right)\oB{t} dt + \sumkmeanj\LL_{\skj}\oB{t}  dW^{k,j}_t\\ &+\sumkoscj\LL_{\skj}\mB{t}   dW^{k,j}_t+\sumkoscj\left(\LL_{\skj}\oB{t}\right)'  dW^{k,j}_t, \\
        \operatorname{div}\oB{t}&=0, \\
        \oB{t}|_{t=0}&=\oB{0}.
    \end{cases}
\end{equation}
As discussed in \autoref{sec introduction}, we study the convergence of $\mB{t}$ in the simultaneous scaling limit of both separation of scales, i.e. considering a noise which concentrates on smaller and smaller scales as described in \autoref{subsect description noise}, and of the width of our thin layer, namely considering $\eps=\frac{1}{N}\rightarrow 0$ as discussed in \cite{Raugel1, RaugelII,TemamZianeThin3d}. In order to do so, we need to introduce some few objects: we denote by \begin{align*}
    \mA{t}=K_{\eps}[\mB{t}],\quad \oA{t}=K_{\eps}[\oB{t}].
\end{align*}
Thanks to the regularity properties of the Biot-Savart operator, see \autoref{function spaces periodic}, we have that 
\begin{align*}
    \overline{A^{\eps}}\in L^2_{\mathcal{F}}(0,T;\mathbf{H}^2_{\eps})\cap C_{\mathcal{F}}(0,T;\mathbf{H}^1_{\eps}),\quad {{A}^{\eps}}'\in L^2_{\mathcal{F}}(0,T;\mathbf{H}^2_{\eps})\cap C_{\mathcal{F}}(0,T;\mathbf{H}^1_{\eps}),
\end{align*}
moreover \begin{align*}
    \overline{A^{\eps}}=(\overline{A^{H,\eps}}, \overline{A^{3,\eps}})^t=(-\nabla_H^{\perp}(-\Delta)^{-1}\overline{B^{3,\eps}},(-\Delta)^{-1}\nabla^{\perp}_H\cdot \overline{B^{H,\eps}})^t.
    \end{align*}
Lastly we introduce our limit objects. Following the idea first introduced in \cite{galeati2020convergence}, we expect that our limit objects satisfy a PDE with an additional second order operator with intensity related to \begin{align*}
    \operatorname{lim}_{\eps\rightarrow 0}\sum_{\substack{k\in \Z^{3,N}_0\\ j\in \{1,2\}}}\norm{\skj}_{\eps}^2.
\end{align*}
Besides to the fact that the limit above is $0$, in the simultaneous separation of scale, thin layer limit, according to the discussion in \autoref{subsect description noise}, the final deterministic objects keep memory of the operators $\Lambda^{\eps},\ \Lambda^{\eps}_{\rho}$ defined above. Indeed, denoting by $\overline{A^3},\ \overline{B^3}$ the unique weak solutions of the following linear 2D PDEs 
\begin{align}\label{limit solution A3}
&\begin{cases}
\partial_t \overline{A^{3,\gamma}_t}&=(\eta+\eta_T)\Delta \overline{A^{3,\gamma}_t}\quad x\in \T^2,\ t\in (0,T)\\
\overline{A^{3,\gamma}_t}|_{t=0}&=\overline{A^3_0},   
\end{cases}\\
\label{limit solution B3}
&\begin{cases}
\partial_t \overline{B^{3,\gamma}_t}&=(\eta+\eta_T)\Delta \overline{B^{3,\gamma}_t}+\operatorname{div}\left(\mathcal{R}_{\gamma}\nabla^{\perp}\overline{A^{3,\gamma}_t}\right)\quad x\in \T^2,\ t\in (0,T)\\
\overline{B^{3,\gamma}_t}|_{t=0}&=\overline{B^3_0},   
\end{cases}
\end{align}
where $\eta_T=\zeta_{H,2}$ and 
\begin{align*}
\mathcal{R}_{\gamma}=\begin{cases}
   \frac{2\pi\rho\log{2}} {C_{1,H}C_{2,H}}\begin{bmatrix} 
   0 & -1\\ 1 & 0
   \end{bmatrix} & \text{if } \gamma=4\\
   0 & \text{if } \gamma>4,
\end{cases}
\end{align*}
our main result reads as follows:
\begin{theorem}\label{main Theorem}
Assuming \autoref{HP noise} and \begin{align}\label{assumptions initial conditions main thm}
\mBcomp{0}{3}\rightharpoonup \overline{B^3_0} \text{ in }\Dot L^2(\T^2),\quad \mAcomp{0}{3}\rightharpoonup \overline{A^3_0} \text{ in }\Dot L^2(\T^2),\quad \sup_{\eps\in (0,1)}\frac{\norm{\oBcomp{0}{3}}_{\eps}^2}{\eps},\quad \sup_{\eps\in (0,1)}\frac{\norm{\oA{0}}_{\eps}^2}{\eps}<+\infty, 
\end{align}
there exists $N_0$ large enough such that for each $N\geq N_0$, for each  $\theta_1\in (0,1],\ \theta_2\in (0,\theta_1),\ \delta\in (0,\theta_2)$ we have
\begin{align}\label{main thm ineq 1}
\operatorname{sup}_{t\in [0,T]}\expt{\norm{\mBcomp{t}{3}- \overline{B^{3,\gamma}_t}}_{\Dot H^{-\theta_1}}^2}  &\lesssim \begin{cases}
    \norm{\overline{B^3_0}-\mBcomp{0}{3}}^2_{\Dot{H}^{-\theta_1}}+\norm{\overline{A^3_0}-\mAcomp{0}{3}}^2_{\Dot{H}^{-\theta_2}}+\frac{1}{
    N^{\frac{(\theta_2-\delta)(\theta_1-\theta_2)}{(1+\delta)}}}\\   \text{if }\gamma=4,\\
    \norm{\overline{B^3_0}-\mBcomp{0}{3}}^2_{\Dot{H}^{-\theta_1}}+\norm{\overline{A^3_0}-\mAcomp{0}{3}}^2_{\Dot{H}^{-\theta_2}}+\frac{1}{N^{\gamma-4}}+\frac{1}{
    N^{\frac{(\theta_2-\delta)(\theta_1-\theta_2)}{(1+\delta)}}}\\  \text{if }\gamma>4;
    \end{cases}  \\
\label{main thm ineq 2} \operatorname{sup}_{t\in [0,T]}\expt{\norm{\mAcomp{t}{3}-\overline{A^{3,\gamma}_t}}_{\Dot H^{-\theta_2}}^2}&\lesssim \norm{\mAcomp{0}{3}-\overline{A^3_0}}_{\Dot{H}^{-\theta_2}}^2+\frac{1}{N^{\theta_2}}+\frac{1}{
N^{\frac{2(\theta_2-\delta)}{1+\delta}}}.
\end{align}
The hidden constants in \eqref{main thm ineq 1}, \eqref{main thm ineq 2} depend on $\eta,$ $\beta,$ $\gamma,$ $C_V,$ $C_{1,H},$ $C_{2,H},$ $\lvert \rho\rvert$, $T$, $\operatorname{sup}_{\eps\in (0,1)}{\norm{\mBcomp{0}{3}}^2},$      \\ $\operatorname{sup}_{\eps\in (0,1)}{\norm{\mAcomp{0}{3}}^2}$, $\operatorname{sup}_{\eps\in (0,1)}\frac{{\norm{\oBcomp{0}{3}}^2_{\eps}}}{{\eps}},$ $\operatorname{sup}_{\eps\in (0,1)}\frac{{\norm{\oAcomp{0}{3}}^2_{\eps}}}{{\eps}},$ $\theta_1,$ $\theta_2$, $\delta $. 
In particular 
\begin{align*}
\operatorname{sup}_{t\in [0,T]}\expt{\norm{\mBcomp{t}{3}-\overline{B^{3,\gamma}_t}}_{\Dot{H}^{-\theta_1}}^2}+\operatorname{sup}_{t\in [0,T]}\expt{\norm{\mAcomp{t}{3}-\overline{A^{3,\gamma}_t}}_{\Dot{H}^{-\theta_2}}^2}\rightarrow 0\quad \text{as } N\rightarrow +\infty.
\end{align*}
\end{theorem}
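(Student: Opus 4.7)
The plan is to attack the two convergences in sequence: first establish that $\mAcomp{t}{3} \to \overline{A^{3,\gamma}_t}$ in $\Dot H^{-\theta_2}$, then bootstrap to obtain $\mBcomp{t}{3} \to \overline{B^{3,\gamma}_t}$ in the weaker $\Dot H^{-\theta_1}$ topology. The necessity of the loss $\theta_1 > \theta_2$ is structural: the alpha-term in \eqref{limit solution B3} is $\operatorname{div}(\mathcal{R}_\gamma \nabla^\perp \overline{A^{3,\gamma}_t})$, which is first order in $\overline{A^{3,\gamma}_t}$, so any quantitative control of $\overline{B^{3,\gamma}}$ costs one derivative relative to the control on $\overline{A^{3,\gamma}}$.

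First, I would set up mild formulations. Applying $K_\eps$ to \eqref{Introductory equation Ito mean} and exploiting that $M_\eps$ commutes with $K_\eps$ and with fractional Laplacians (\autoref{properties of M N}), one can write closed equations for $\mAcomp{t}{3}$ and $\mBcomp{t}{3}$ as $\Dot L^2(\T^2)$-valued processes. Crucially, on fields that are independent of $x_3$ the corrector $\Lambda^\eps$ reduces to $\eta^\eps_T \Delta_H$, so I would absorb the dissipation into the semigroup $S^\eps_t = e^{t(\eta+\eta^\eps_T)\Delta_H}$, giving mild representations with three remaining ingredients: a drift coming from $\Lambda^\eps_\rho$ (the source of the alpha-term), stochastic integrals over $k$ with $k_3=0$ (the ``2D'' modes) and $k_3\ne 0$ (the ``vertical'' modes), and a coupling term $\sumkoscj \overline{\LL_{\skj}\oB{t}}\,dW^{k,j}_t$ that feeds the fluctuations into the mean. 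Parallel mild formulations for $\overline{A^{3,\gamma}}$ and $\overline{B^{3,\gamma}}$ use the full semigroup $S_t = e^{t(\eta+\eta_T)\Delta}$.

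For \eqref{main thm ineq 2}, I would estimate $\mAcomp{t}{3} - \overline{A^{3,\gamma}_t}$ in $\Dot H^{-\theta_2}$ by splitting the difference into: (a) the mismatch $\eta^\eps_T - \eta_T = O(N^{-1})$ between the two semigroups, handled by \autoref{Properties semigroup}; (b) the contribution of $\Lambda^\eps_\rho$ projected onto the third component of the vector potential, whose size is governed by $\norm{\nabla \overline{Q^\eps_\rho}(0)}_{HS} = O(N^{2-\gamma/2})$ from \autoref{convergence properties matrix} (hence the $1/N^{\theta_2}$-type error); (c) stochastic integrals, whose Hilbert--Schmidt norms in $\Dot H^{-\theta_2}$ are bounded by an interpolation between uniform $L^2$ a priori bounds on $\mAcomp{t}{3}$ and the trivial $\Dot H^{-1-\delta}$ bounds obtained by pulling out one Laplacian; this interpolation produces the $1/N^{2(\theta_2-\delta)/(1+\delta)}$ exponent. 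The coupling term to the fluctuations is controlled by the a priori bound $\norm{\oB{t}}_\eps^2 \lesssim \eps$ (which in turn follows from the Poincaré inequality \eqref{poincare thin layer} applied to \eqref{Introductory equation Ito oscillation}) and thus contributes an extra $O(\eps)$ that is absorbed in the stated rate. Gronwall's inequality closes the estimate.

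For \eqref{main thm ineq 1}, I would rewrite the drift $\Lambda^\eps_\rho \mB{t}$ using the divergence-free identity $\overline{B^{H,\eps}_t} = \nabla^\perp \mAcomp{t}{3}$ (valid since $\mB{t}$ is independent of $x_3$ and divergence free), turning the corrector into $\operatorname{div}(\mathcal{R}_{\eps,\gamma} \nabla^\perp \mAcomp{t}{3})$ up to remainders bounded by \autoref{convergence properties matrix}. The term is then compared with $\operatorname{div}(\mathcal{R}_\gamma \nabla^\perp \overline{A^{3,\gamma}})$ using the already-established convergence \eqref{main thm ineq 2}; because this step loses one derivative, the admissible topology is $\Dot H^{-\theta_1}$ with $\theta_1 > \theta_2$, and the interpolation exponent $(\theta_2-\delta)(\theta_1-\theta_2)/(1+\delta)$ reflects exactly this derivative cost. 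In the case $\gamma > 4$ the full matrix $\mathcal{R}_{\eps,\gamma}$ itself vanishes at rate $N^{2-\gamma/2}$, which explains the additional $1/N^{\gamma-4}$ term.

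The main obstacle will be the stochastic stretching encoded in $\mathcal{L}_{\skj}$, which mixes vector components and produces quadratic variations whose evaluation requires tight control of sums like $\sumkj |\tkj|^2 k_l k_m\, a_{k,j}\otimes a_{k,j}$. A naive bound on such sums would diverge with $N$, and only the mirror-symmetry cancellations of \autoref{lemma covariance operator 2} (in particular \eqref{mirror symmetry product 0}) rescue the estimate; making sure these cancellations survive after projecting onto $M_\eps$, after splitting mean/fluctuation products via \autoref{remark fourier decomposition and splitting products}, and after taking negative Sobolev norms, is the delicate part. The secondary difficulty is the recursive structure: the fluctuation equation \eqref{Introductory equation Ito oscillation} contains $\mB{t}$ in its forcing, so the a priori estimate $\norm{\oB{t}}_\eps^2 \lesssim \eps$ must be proved jointly with, or prior to, the convergence argument, and this is precisely the content of the a priori estimates section to which the proof will appeal.
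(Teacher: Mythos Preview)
Your overall architecture---mild formulations, interpolation of the stochastic convolutions between $\Dot H^{-\delta}$ and $\Dot H^{-1-2\delta}$, and a two-step comparison first for $\mAcomp{t}{3}$ then for $\mBcomp{t}{3}$---matches the paper's route (Lemmas~4.1--4.6). Two points, however, are inaccurate and the second one is a genuine gap.

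First, there is \emph{no} alpha-term in the $\mAcomp{t}{3}$ equation: by \autoref{remark ineffective on 2D functions}, $\Lambda^\eps_\rho$ acting on a 2D divergence-free field has only a third component, so it vanishes on the horizontal part of $\mB{t}$ and hence on $\mAcomp{t}{3}$. The mild form is exactly $\mAcomp{t}{3}=e^{t\kappa_\eps\Delta}\mAcomp{0}{3}+Z^{2,\eps}_t$ (Lemma~4.1), and the $1/N^{\theta_2}$ contribution in \eqref{main thm ineq 2} comes purely from the semigroup mismatch $|\eta^\eps_T-\eta_T|^{\theta_2/2}\lesssim N^{-\theta_2/2}$, not from $\nabla\overline{Q^\eps_\rho}(0)$.

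Second, and more seriously, your route to the fluctuation bound ``$\norm{\oB{t}}_\eps^2\lesssim\eps$ follows from Poincar\'e applied to \eqref{Introductory equation Ito oscillation}'' is exactly the step that fails, as the paper explains in \autoref{sec strategy proof}. An energy estimate on $\oBcomp{t}{H}$ produces the stretching term $\sumkmeanj\norm{\oB{s}\cdot\nabla\skjcomp{H}}_\eps^2$, which Poincar\'e does control, \emph{but also} a forcing term from $\LL_{\skj}\mB{t}$ that requires $\int_0^T\norm{\nabla\mBcomp{s}{H}}_\eps^2\,ds$, and this quantity is uncontrollable (it is the 3D stretching obstruction). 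The paper's fix is to abandon $\oBcomp{t}{H}$ and work instead with the vector potential $\oA{t}=K_\eps[\oB{t}]$: at this regularity level the bad stretching disappears (cf.\ \eqref{property 2 inversion curl}--\eqref{property 3 inversion curl}), and the resulting coupled system in $(\mAcomp{t}{3},\mBcomp{t}{3},\oA{t},\oBcomp{t}{3})$ can be closed via the elaborate bootstrap of \autoref{proposition a priori estimate 1}. Without this passage to the potential, your a priori estimate does not close, and the subsequent stochastic-convolution bounds (which rely on $\norm{\nabla\oA{s}}_\eps^2=\norm{\oB{s}}_\eps^2$ being $O(\eps)$ in $L^1_t$) would not be available.
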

\begin{remark}\label{explicit dependence}
The explicit dependence of the hidden constants in \eqref{main thm ineq 1}, \eqref{main thm ineq 2} in terms of $\theta_1,\ \theta_2$ and $\delta$ can be found in \autoref{sec proof main thm}, see \autoref{preliminary convergence} and \autoref{preliminary convergence 2}.     
\end{remark}
\begin{remark}\label{remark HP satisfied}
Our assumption \eqref{assumptions initial conditions main thm} is for example satisfied if $B^{\eps}_0$ can be decomposed as
\begin{align*}
B^{\eps}_0(x_1,x_2,x_3)=\overline{B}_0(x_1,x_2)+{B}'_0\left(x_1,x_2,\frac{x_3}{\eps}\right),
\end{align*}
where, calling ${B^{\eps}_0}'=B_0'\left(\cdot,\cdot,\frac{\cdot}{\eps}\right)$, we have \begin{align*}
    &\overline{B}_0\in \Dot L^2(\T^2; \R^3),\quad {B}'_0\in \Dot L^2(\T^3; \R^3),\quad \int_{\T} B'_0(x_1,x_2,x_3) dx_3=0\ a.e.\ (x_1,x_2)\in \T^2\\  &\operatorname{div} \overline{B}_0=0,\quad \operatorname{div}{B^{\eps}_0}'=0\quad \forall N\in \N.
\end{align*}
\end{remark}
\begin{remark}
Due to \autoref{HP noise}, $\eta_T$ cannot be arbitrarily large in our model. Indeed the following holds
\begin{align*}
    \eta_T=\frac{\zeta_{H,2}}{C_{1,H}^2}<\frac{\zeta_{H,2}}{\zeta_{H,0}}\eta\approx 0.462 \eta.
\end{align*}
\end{remark}
\begin{remark}\label{remark why quantitative}
Contrary to other results related to ours, see \cite{galeati2020convergence, flandoli2021scaling,flandoli2023boussinesq,flandoli20232d,TemamZianeThin3d}, we do not argue by compactness but we provide a quantitative result in the original probability space. Besides from the fact that this makes the result stronger, we are forced to argue in this way. Indeed, we can obtain uniform estimates in a common topology only for $\mBcomp{\cdot}{3},\ \mAcomp{\cdot}{3}$ exploiting the fact that they are independent from the third variable and we can see them as functions from $\T^2$ to $\R^3$. This seems not possible for quantities related to $\oB{\cdot}$. Due to the lack of compactness for quantities related to $\oB{\cdot}$, we cannot find by Skorokhod's representation theorem an auxiliary probability space where either \eqref{weak formulation mean}, \eqref{weak formulation oscj} hold and $(\mBcomp{\cdot}{3},\  \mAcomp{\cdot}{3})$ converge a.s. to some limit objects. Therefore we cannot pass to the limit in equation \eqref{weak formulation mean} showing that the limit objects satisfy \eqref{limit solution A3}, \eqref{limit solution B3}.
\end{remark}
\subsection{Strategy of the Proof}\label{sec strategy proof}
The proof of \autoref{main Theorem} relies on rewriting in mild form $\mBcomp{t}{3},\ \mAcomp{t}{3},\ \overline{M^3_t},\ \overline{A^3_t}$, see \autoref{mild form}, showing, in a quantitative way, that the stochastic convolutions approach $0$ when considering negative Sobolev norms in $\T^2$ and having a good control on the pieces associated to the alpha-term in \eqref{Introductory equation Ito mean}, \eqref{limit solution B3}. This is the object of \autoref{corollary stochastic convolution}, \autoref{preliminary convergence} and \autoref{preliminary convergence 2}. These results (and actually also stronger ones) would be easily true if the following holds:
If $\norm{B_0^{\eps}}_{\eps}^2=O(\eps)$, then also
\begin{align}\label{dream relation}
    \norm{{B^{\eps}}}_{C_{\mathcal{F}}(0,T;\mathbf{L}^2_{\eps})}^2+\norm{{B^{\eps}}}_{L^2_{\mathcal{F}}(0,T;\mathbf{H}^1_{\eps})}^2=O(\eps).
\end{align}
Unfortunately, due to the stochastic stretching terms, equation \eqref{dream relation} seems completely out of reach if one is interested on having a nontrivial operator in the limit $\operatorname{lim}_{\eps\rightarrow 0}\Lambda^{\eps}$, see \cite[Appendix 2]{FlLuo21}. Even if the operator $\Lambda^{\eps}_{\rho}$ and the correlation between the components of the noise $W^{\eps}_t$ makes system \eqref{Introductory equation Ito} more complicated, they do not increase the difficulties in order to obtain \eqref{dream relation}. We rely on the following idea, which we inherited by \cite{Raugel1, TemamZianeThin3d}: since the equation for $\mB{t}$ is a perturbation of a 2D-3C system, we can work, as in \cite{flandoli2023boussinesq}, considering differently the horizontal and the vertical component of the magnetic field $\mB{t}$. Since the third component $\mBcomp{t}{3}$ corresponds, in a vague sense, to the unique non null component associated to the $\operatorname{curl}$ of a 2D divergence free vector field, according to \cite{flandoli2021scaling},\cite{flandoli2021quantitative},\cite{flandoli20232d}, we expect that, if we were able to control the perturbation with respect to the 2D-3C model, it behaves well in term of \eqref{dream relation}, see \cite{flandoli2023boussinesq}. This is not true when considering $\mBcomp{t}{H}$. Indeed, having a good control on the stochastic stretching terms appearing in the equation of $\mBcomp{t}{H}$ is as difficult as proving \eqref{dream relation} and keeping a nontrivial operator in the limit due to the fact that, without exploiting some geometric features of the model, the term 
\begin{align}\label{bad stretching term BH}
    \sumkmeanj\int_0^T \norm{\mB{s}\cdot\nabla\skjcomp{H}}_{\eps}^2 ds,
\end{align}
appearing in the energy estimates, seems to blow-up. The idea is, therefore, of considering the potential field $\mA{t}$ associated to $\mB{t}$. Thanks to \eqref{Biot savart independent from third component}, $\mBcomp{t}{H}=-\nabla^{\perp}_H \mAcomp{t}{3}$ and, formally, due to \autoref{Inversion of Lie derivative} below, it solves
\begin{align*}
d\mAcomp{t}{3}&=\left(\eta \Delta+\Lambda^{\eps}\right)\mAcomp{t}{3} dt + \sumkmeanj\skj\cdot \nabla \mAcomp{t}{3}  dW^{k,j}_t\\& +\sumkoscj Q\left[\overline{\skj\cdot \nabla \oAcomp{t}{3}+\partial_3\skj\cdot\oAcomp{t}{3}}  \right]dW^{k,j}_t.
\end{align*}
Notice that, due to \autoref{remark ineffective on 2D functions}, $\Lambda^{\eps}_{\rho}\mB{t}$ is a vector having the first two components null, therefore it does not affect the equation for $\mBcomp{t}{H}$.    
Now, the bad stretching term \eqref{bad stretching term BH} has disappeared at this level of regularity, and to show that $\mAcomp{t}{3}$ behaves well in term of \eqref{dream relation} we are left to analyze this perturbation of the 2D-3C model. In order to show that the equations for $\mAcomp{t}{3},\ \mBcomp{t}{3}$
are just a regular perturbation of the 2D-3C model we need either some smallness assumptions on the coefficient $\skj,\ k_3\neq 0$ in term of the derivative of the covariance function associated to the noise, see \autoref{subsect description noise}, and some bounds on $\oB{t}$. Also in this case, the analysis of $\oBcomp{t}{3}$ is relatively easy thanks to our assumptions on the noise which allows to control the stochastic stretching terms, see \autoref{subsect description noise}. Lastly we need a good control on $\oBcomp{t}{H}$. This term seems again critical for what concerns the stochastic stretching, indeed a term of the form 
\begin{align}\label{almost bad stretching term BH}
    \sumkmeanj\int_0^T \norm{\oB{s}\cdot\nabla\skjcomp{H}}_{\eps}^2 ds
\end{align}
appears when computing energy estimates. Besides its form analogous to \eqref{bad stretching term BH}, the term \eqref{almost bad stretching term BH} needs a different treatment than the others. For this term, we are able to exploit the special geometry of the thin layer, namely the Poincaré inequality \eqref{poincare thin layer} (with constant $\eps$) holds when considering $\oB{t}$, counterbalancing the gradients of the coefficients. Indeed, applying H\"older inequality to relation \eqref{almost bad stretching term BH}, the explosive behavior of $\norm{\nabla\skjcomp{H}}_{L^{\infty}(\T^3_{\eps})}$ is absorbed thanks to \eqref{poincare thin layer} and we obtain
\begin{align*}
  \sumkmeanj\int_0^T \norm{\oB{s}\cdot\nabla\skjcomp{H}}_{\eps}^2 ds& \leq \left(\frac{\zeta_{H,0}}{C_{1,H}^2}+o(1)\right)\int_0^T \norm{\nabla\oB{s}}^2_{\eps}ds.
\end{align*}
We refer to the analysis of $I^2_t$ in \autoref{ito formula oscillation} below for details. 
Since we already have good estimates on $\int_0^T\norm{\nabla\oBcomp{s}{3}}^2_{\eps}ds$ and \autoref{HP noise} holds, \eqref{almost bad stretching term BH} can be absorbed thanks to the dissipation properties of \eqref{Introductory equation Ito oscillation} obtaining a good control on the stochastic stretching term. Unfortunately this is not enough: energy estimates for $\oBcomp{t}{H}$ involves also terms related to $\int_0^T \norm{\nabla\mBcomp{t}{H}}_{\eps}^2 ds$ which are out of reach as discussed before. For this reason, we replace the analysis of $\oBcomp{t}{H}$ with a term which involves at most $\int_0^T\norm{\nabla\mBcomp{t}{3}}_{\eps}^2 ds$ and $ \int_0^T\norm{\nabla\mAcomp{t}{3}}_{\eps}^2 ds$ when computing its energy estimates and combined with $\oBcomp{t}{3}$ is enough to provide some closeness of $\mBcomp{t}{3}, \mAcomp{t}{3}$ from being a solution of the 2D-3C model. The right choice for our purposes is $\oA{t}=K_{\eps}[\oB{t}]$. The stochastic stretching term appearing in this case is not identically zero as for $\mAcomp{t}{3}$, but can be treated as for \eqref{almost bad stretching term BH}.\\ Compared to \cite{flandoli2023boussinesq}, the equations appearing for our system are more complicated since, even if linear, none of them is closed and they are interconnected in a not trivial way when computing a priori estimates. This is only partially due to the presence of the operator $\Lambda^{\eps}_{\rho}$ and the correlation between the components of the noise $W^{\eps}_t$. We hope that this subsection could help the reader in order to understand the reasons behind the long computations of \autoref{section a priori estimates} and the heuristics for \autoref{main Theorem}.

\section{A Priori Estimates}\label{section a priori estimates}
Recalling that \begin{align*}
\overline{B}^{\eps}\in L^2_{\mathcal{F}}(0,T;\mathbf{H}^1_{\eps})\cap C_{\mathcal{F}}(0,T;\mathbf{L}^2_{\eps})\quad
\text{(resp. }{B^{\eps}}'\in  L^2_{\mathcal{F}}(0,T;\mathbf{H}^1_{\eps})\cap C_{\mathcal{F}}(0,T;\mathbf{L}^2_{\eps})\text{),}
\end{align*}
since $\overline{B}^{\eps}$ in independent on the third variable we have
\begin{align*}
\overline{B}^{\eps}\in L^2_{\mathcal{F}}(0,T;\Dot H^1(\T^2;\R^3))\cap C_{\mathcal{F}}(0,T;\Dot L^2(\T^2;\R^3)).\end{align*}
The goal of \autoref{corollary compactness in space} is to prove some bounds uniform in $\eps>0$ for quantities related to  $\overline{B}^{\eps}$ in a common topology independent from $\eps$. In order to do so, we start obtaining in \autoref{ito formula oscillation} and \autoref{ito formula mean} some pathwise estimates for quantities related to $\overline{B}^{\eps}$ and ${B^{\eps}}'$. Thanks to these we are able to gain in \autoref{proposition a priori estimate 1} some intermediate a priori bounds in topologies depending from $\eps$. These results will imply \autoref{corollary compactness in space}.

As discussed in \autoref{sec strategy proof} we will work with the potential fields associated to $\mB{t}$ and $\oB{t}$ at a certain point. In order to do so we employ the following classical relations:
\begin{proposition}\label{equivalence norms}
Let $X\in \mathbf{L}^2_{\eps},\ Y\in \mathbf{H}^1_{\eps}$. Then 
\begin{align}\label{property 1 norm}
    \norm{X}^2_{\eps}&=\norm{(-\Delta)^{-1/2}\operatorname{curl}X}_{\eps}^2,\quad \norm{\nabla Y}^2_{\eps}=\norm{\operatorname{curl}Y}_{\eps}^2.
\end{align}
In particular if either $X$ and $Y$ are independent from the third variable, it holds
\begin{align}\label{property 2 norm}
    \norm{X^3}^2_{\eps}&=\norm{(-\Delta)^{-1/2}\nabla_{H}^{\perp} X^3}_{\eps}^2,\quad \norm{\nabla Y^3}^2_{\eps}=\norm{\nabla_H^{\perp}Y^3}_{\eps}^2.
\end{align}
\end{proposition}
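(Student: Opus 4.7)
The approach is to verify these identities through a combination of integration by parts (exploiting the periodic setting to drop boundary terms) and the vector calculus identity $\operatorname{curl}\operatorname{curl} = \nabla\operatorname{div} - \Delta$, which collapses to $-\Delta$ on divergence-free fields. For the 2D statements, I would pass to Fourier series on $\T^2$ where the identities are completely transparent.

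First I would establish \eqref{property 1 norm}. For the second identity in \eqref{property 1 norm}, integration by parts gives
\begin{align*}
\|\nabla Y\|^2_\eps = \langle -\Delta Y, Y\rangle_\eps = \langle (\operatorname{curl}\operatorname{curl} - \nabla\operatorname{div}) Y, Y\rangle_\eps = \langle \operatorname{curl} Y, \operatorname{curl} Y\rangle_\eps = \|\operatorname{curl} Y\|^2_\eps,
\end{align*}
where the $\nabla\operatorname{div}$ term vanishes since $\operatorname{div} Y = 0$ and we used the self-adjointness of $\operatorname{curl}$ on zero-mean vector fields in the periodic setting (a direct integration by parts with no boundary contribution). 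For the first identity in \eqref{property 1 norm}, the same algebraic identity gives $X = (-\Delta)^{-1}\operatorname{curl}\operatorname{curl} X$ for divergence-free $X$, and testing against $X$ yields
\begin{align*}
\|X\|^2_\eps = \langle (-\Delta)^{-1}\operatorname{curl}\operatorname{curl} X, X\rangle_\eps = \langle \operatorname{curl} X, (-\Delta)^{-1}\operatorname{curl} X\rangle_\eps = \|(-\Delta)^{-1/2}\operatorname{curl} X\|^2_\eps,
\end{align*}
again using self-adjointness of $\operatorname{curl}$ and the fact that $(-\Delta)^{-1/2}$ is self-adjoint and commutes with $\operatorname{curl}$.

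Next I would handle \eqref{property 2 norm}. When $Y$ is independent of $x_3$, one simply observes $\partial_3 Y^3 = 0$ and hence $|\nabla Y^3|^2 = (\partial_1 Y^3)^2 + (\partial_2 Y^3)^2 = |\nabla_H^\perp Y^3|^2$ pointwise, giving $\|\nabla Y^3\|^2_\eps = \|\nabla_H^\perp Y^3\|^2_\eps$. For the first identity in \eqref{property 2 norm}, since $X^3 \in \Dot L^2(\T^2)$ is a scalar function of $(x_1,x_2)$ only, I would Fourier-expand $X^3 = \sum_{k_H \in \Z^2_0} \hat X^3(k_H) e^{ik_H\cdot x_H}$. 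Then $\nabla_H^\perp X^3$ has Fourier coefficients $i k_H^\perp \hat X^3(k_H)$ with $|k_H^\perp|^2 = |k_H|^2$, while $(-\Delta)^{-1/2}$ multiplies by $|k_H|^{-1}$, so the $|k_H|^2/|k_H|^2$ factors cancel and Plancherel yields the desired identity (the factor $2\pi\eps$ in the $L^2(\T^3_\eps)$ norm is absorbed symmetrically on both sides since the integrands are independent of $x_3$).

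There is no real obstacle here — the only subtlety is bookkeeping: making sure that on a divergence-free subspace the Helmholtz-type identity $-\Delta = \operatorname{curl}\operatorname{curl}$ applies, and that the Fourier multipliers commute cleanly (which they do because all operators involved are Fourier multipliers diagonal in the plane-wave basis from \autoref{function spaces periodic}). The periodic setting is essential to drop boundary integrals when integrating by parts to move $\operatorname{curl}$ across the pairing, and the zero-mean assumption ensures $(-\Delta)^{-1}$ is well-defined.
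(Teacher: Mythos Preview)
Your proof is correct. The paper does not actually give a proof of this proposition: it is introduced with ``we employ the following classical relations'' and stated without argument, so there is nothing to compare against. Your approach via the identity $\operatorname{curl}\operatorname{curl} = -\Delta$ on divergence-free fields together with self-adjointness of $\operatorname{curl}$ in the periodic setting is exactly the standard justification, and the Fourier verification for \eqref{property 2 norm} is the cleanest way to handle the 2D statements.
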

\begin{proposition}\label{Inversion of Lie derivative}
Let $X:\T^3_{\eps}\rightarrow \R^3$ a smooth, zero mean, divergence free vector field, $Y\in \mathbf{H}^1_{\eps}$. Then
\begin{align}\label{property 1 inversion curl}
X\cdot\nabla Y-Y\cdot\nabla X\in \mathbf{L}^2_{\eps}.    
\end{align}
 Moreover it holds
 \begin{align}\label{property 2 inversion curl}
 X\cdot\nabla Y-Y\cdot\nabla X&=\operatorname{curl}\left(X\cdot \nabla K_{\eps}[Y]+(DX)K_{\eps}[Y]\right)\notag\\ & = 
 \operatorname{curl}\left(Q\left[X\cdot \nabla K_{\eps}[Y]+(DX)K_{\eps}[Y]\right]\right)\notag\\ & = \operatorname{curl}\left(P_{\eps}\left[Q\left[X\cdot \nabla K_{\eps}[Y]+(DX)K_{\eps}[Y]\right]\right]\right).
 \end{align}
 In particular if both $X$ and $Y$ are independent from the third variable, it holds
 \begin{align}\label{property 3 inversion curl}
 X\cdot\nabla Y^H-Y\cdot\nabla X^H=-\nabla^{\perp}_H\left(X\cdot\nabla K^3_{\eps}[Y]\right),  
 \end{align}
 where we denoted by $K^3_{\eps}[Y]$ the third component of $K_{\eps}[Y]$.
\end{proposition}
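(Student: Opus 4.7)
The statement decomposes into three assertions, each of which follows from a direct vector-calculus computation.

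For membership of $X\cdot\nabla Y-Y\cdot\nabla X$ in $\mathbf{L}^2_{\eps}$, square-integrability is immediate from $X$ being smooth and $Y\in\mathbf{H}^1_{\eps}$. Zero mean follows by rewriting $X_j\partial_j Y_i=\partial_j(X_j Y_i)$ using $\operatorname{div}X=0$, and similarly for $Y\cdot\nabla X$: the integral of a divergence on $\T^3_{\eps}$ vanishes. The divergence-free property reduces to checking $\partial_i(X_j\partial_j Y_i)=(\partial_i X_j)(\partial_j Y_i)$, which uses $\operatorname{div}Y=0$ and is symmetric in $X$ and $Y$ after relabeling indices, so the divergence of the difference vanishes.

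For the central identity \eqref{property 2 inversion curl}, I would set $A:=K_{\eps}[Y]$, so that $Y=\operatorname{curl}A$, and combine two standard vector identities. Expanding $\epsilon_{ijk}\epsilon_{klm}$ yields
\[
X\times\operatorname{curl}A \;=\; \nabla(X\cdot A)-(DX)A-(X\cdot\nabla)A,
\]
where the paper's convention $Dv=(\nabla v)^t$ makes $((DX)A)_i=(\partial_i X_j)A_j$. Applying $\operatorname{curl}$ and using $\operatorname{curl}\nabla=0$ gives $\operatorname{curl}(X\times Y)=-\operatorname{curl}\bigl((X\cdot\nabla)A+(DX)A\bigr)$. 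On the other hand, the classical identity for two divergence-free fields gives $\operatorname{curl}(X\times Y)=(Y\cdot\nabla)X-(X\cdot\nabla)Y$. Equating the two expressions proves the first equality of \eqref{property 2 inversion curl}. The subsequent insertions of $Q$ and $P_{\eps}$ are legitimate because $\operatorname{curl}$ annihilates constants (so the mean-free projection $Q$ may be inserted) and gradients (so the Leray projection $P_{\eps}$, which differs from the identity by a gradient, may be inserted).

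For the 2D reduction \eqref{property 3 inversion curl}, I would use that when $X$ and $Y$ depend only on $(x_1,x_2)$, so does $K_{\eps}[Y]$ by \eqref{Biot savart independent from third component}; hence the vector field $U:=(X\cdot\nabla)K_{\eps}[Y]+(DX)K_{\eps}[Y]$ is also $x_3$-independent. Consequently $(\operatorname{curl}U)^H=(\partial_2 U_3,\,-\partial_1 U_3)^t=-\nabla^{\perp}_H U_3$. Since $\partial_3 X_j=0$ for all $j$, the third component of $(DX)K_{\eps}[Y]$ vanishes, leaving $U_3=X\cdot\nabla K^3_{\eps}[Y]$, as required. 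There is no genuine analytic obstacle here: the entire argument is index-level vector calculus, valid in the periodic setting with no boundary contributions. The only real bookkeeping is to keep the convention $Dv=(\nabla v)^t$ consistent so that $(DX)A$ matches exactly the term produced by expanding $X\times\operatorname{curl}A$; once this is fixed, the rest is routine.
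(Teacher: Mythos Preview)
Your proof is correct. The paper states this proposition without proof, treating it as a standard result from vector calculus; your direct verification via the identities $X\times\operatorname{curl}A=\nabla(X\cdot A)-(DX)A-(X\cdot\nabla)A$ and $\operatorname{curl}(X\times Y)=(Y\cdot\nabla)X-(X\cdot\nabla)Y$ for divergence-free fields is exactly the computation one would supply, and your handling of the $Q$ and $P_{\eps}$ insertions and the 2D reduction is accurate.
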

Now we have all the elements to provide the necessary a priori estimates.
\begin{lemma}\label{ito formula oscillation}
Assuming \autoref{HP noise}, for each $\xi_1,\ \xi_2>0$ we have
\begin{align}\label{ito formula A'}
   d\norm{\oA{t}}_{\eps}^2+2\eta\norm{\nabla \oA{t}}^2_{\eps}dt& \leq 2\sumkj  \langle D\skj\oA{t},\oA{t} \rangle_{\eps}dW^{k,j}_t \notag\\ &  +2\sumkoscj  \langle \skj\cdot \nabla \mA{t}+D\skj\mA{t},\oA{t} \rangle_{\eps}dW^{k,j}_t \notag\\ & +2\left(\frac{\zeta_{H,0}}{C_{1,H}^2}+\frac{\lvert \rho\rvert}{\xi_1}\frac{\left(\zeta_{H,0}+\zeta_{H,2}\right)}{C_{1,H}^2}+O(N^{-1})\right)\norm{\nabla\oA{t}}^2_{\eps}dt\notag\\ & +\left(6\frac{\zeta_{\beta}\zeta_{H,0} }{N^{\beta-2}C_V^2}+O(N^{1-\beta})\right)\lVert \nabla \mA{t}\rVert_{\eps}^2 dt +\left(6\frac{\zeta_{\beta-2}\zeta_{H,0}}{N^{\beta-4}C_V^2}+O(N^{3-\beta})\right)\norm{\mA{t}}_{\eps}^2 dt,
\end{align}
\begin{align}\label{ito formula M3'}
    &d\norm{\oBcomp{t}{3}}^2_{\eps}+2\eta\norm{\nabla \oBcomp{t}{3}}^2_{\eps}dt\notag\\& \leq -2\sum_{\substack{k\in\Z^{3,N}_0\\ j\in \{1,2\}}}  \langle \oB{t}\cdot\nabla\skjcomp{3},\oBcomp{t}{3} \rangle_{\eps}dW^{k,j}_t  +2\sumkoscj  \langle \skj\cdot \nabla \mBcomp{t}{3}-\mB{t}\cdot\nabla\skjcomp{3},\oBcomp{t}{3} \rangle_{\eps}dW^{k,j}_t\notag\\ &+2\frac{\lvert \rho\rvert}{\xi_2}\left(\frac{\zeta_{H,2}}{ C_{1,H}^2}+\sqrt{2}\frac{\zeta_{H,\gamma/2}}{C_{1,H}C_{2,H}}+O(N^{-1})\right)\norm{\nabla \oBcomp{t}{3}}_{\eps}^2 dt \notag\\ & +2\left(\frac{\zeta_{H,\gamma-2}}{N^{\gamma-4}C_{2,H}^2}+2\frac{\zeta_{\beta-2}\zeta_{H,0}}{N^{\beta-4}C_V^2}+\frac{\lvert \rho\rvert\xi_2\zeta_{H,\gamma-2}}{N^{\gamma-4}C_{2,H}^2}+\sqrt{2}\lvert \rho\rvert \xi_2\frac{\zeta_{H,\gamma/2}}{N^{\gamma-4}C_{1,H}C_{2,H}}+O(N^{3-\gamma}+N^{3-\beta})\right)\norm{\nabla\oA{t}}^2_{\eps}dt\notag\\ & +2\sumkoscj\norm{\skj\cdot\nabla\mBcomp{t}{3}}^2_{\eps}dt+\left(4\frac{\zeta_{\beta-2}\zeta_{H,0}}{N^{\beta-4}C_{V}^2}+O(N^{3-\beta})\right)\norm{\mB{t}}^2_{\eps}dt\notag\\ & -2\sumkoscj\norm{\overline{\skj\cdot \nabla \oBcomp{t}{3}-\oB{t}\cdot\nabla\skjcomp{3}}}_{\eps}^2 dt.
\end{align}
\end{lemma}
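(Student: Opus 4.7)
The plan is to derive both inequalities by applying It\^o's formula to the squared $L^2$ norms, then systematically estimate the drift, the It\^o--Stratonovich corrector, and the quadratic variation of the martingale, carefully exploiting the anisotropic structure of the noise and the thin-layer geometry.

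For \eqref{ito formula A'}, I would first rewrite the equation for $\oA{t}$ by applying $K_{\eps}$ to \eqref{Introductory equation Ito oscillation}. Since $K_{\eps}$ commutes with $\Delta$ and $\Lambda^{\eps}$ (both act diagonally in Fourier modes), and by \autoref{Inversion of Lie derivative} one has $K_{\eps}[\LL_{\skj} F] = P_{\eps} Q[\skj \cdot \nabla K_{\eps}[F] + (D\skj) K_{\eps}[F]]$, the noise part becomes a transport--plus--stretching acting on $\oA{t}$. Applying It\^o's formula to $\norm{\oA{t}}^2_{\eps}$, the term $2\eta\langle \oA{t}, \Delta \oA{t}\rangle_{\eps}dt$ produces the dissipation $-2\eta\norm{\nabla \oA{t}}^2_{\eps}dt$, the $\Lambda^{\eps}$ drift is nonpositive after integration by parts and can be discarded, and the $\Lambda^{\eps}_{\rho}$ drift is bounded by Young's inequality with parameter $\xi_1$ together with \autoref{convergence properties matrix}, yielding the $\lvert\rho\rvert/\xi_1$ term. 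The quadratic variation of the martingale contributes $\sum_{k,j}\|P_{\eps} Q[\skj\cdot\nabla\oA{t} + (D\skj)\oA{t}]\|_{\eps}^2\, dt$ (plus the analogous sum with $\mA{t}$ and the $\rho$-correlated cross terms). Expanding the square, the cross term $\langle \skj\cdot\nabla\oA{t},(D\smkj)\oA{t}\rangle_{\eps}$ vanishes after summation by the mirror-symmetry identity \eqref{mirror symmetry product 0}; the pure transport term $\sum_{k,j}\|\skj\cdot\nabla\oA{t}\|_{\eps}^2$ is bounded via $\sum|\skj|^2 \lesssim \zeta_{H,0}/C_{1,H}^2 + o(1)$, producing the principal coefficient $\zeta_{H,0}/C_{1,H}^2$ in front of $\|\nabla\oA\|^2$.

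The critical step is the pure stretching term $\sum_{k,j}\|(D\skj)\oA{t}\|_{\eps}^2$: here $\|D\skj\|_{\infty}$ blows up, but the Poincar\'e inequality \eqref{poincare thin layer} applied to $\oA{t}$ yields $\|\oA{t}\|_{\eps}\le \eps\|\partial_3 \oA{t}\|_{\eps}\le \eps\|\nabla\oA{t}\|_{\eps}$; the factor $\eps^2 = N^{-2}$ exactly cancels the additional $|k|^2 \sim N^2$ in $\|D\skj\|_\infty^2$, giving again a bound of order $\|\nabla\oA{t}\|^2_{\eps}$ with coefficient controlled by $\zeta_{H,0}/C_{1,H}^2$. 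The oscillatory modes ($k_3\neq 0$) contribute terms of order $N^{2-\beta}$ in front of $\|\nabla\mA\|_\eps^2$ and $N^{4-\beta}$ in front of $\|\mA\|_\eps^2$, owing to \eqref{asymptotic coeffienct vertical noise}. The $\rho$-correlation $\overline{Q^{\eps}_\rho}$ terms produce cross contributions controlled by $\|\nabla\oA\|_\eps^2$ via Young with weight $1/\xi_1$ and \autoref{convergence properties matrix}; bounding $\norm{\nabla \overline{Q^\eps_\rho}(0)}_{HS}$ then gives the constants displayed. Collecting everything produces \eqref{ito formula A'}.

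For \eqref{ito formula M3'}, the strategy is essentially identical, applied to the scalar $\oBcomp{t}{3}$ directly from the third component of \eqref{Introductory equation Ito oscillation}. The new ingredient is that the ``stretching'' now reads $-\oB{t}\cdot\nabla\skjcomp{3}$, and its quadratic-variation contribution is $\sum_{k,j}\|\oB{t}\cdot\nabla\skjcomp{3}\|_{\eps}^2$. Decomposing $\oB{t}=(\oBcomp{t}{H},\oBcomp{t}{3})^t$ and splitting the sum according to $k_3=0$ or $k_3\neq 0$, $j=1$ or $j=2$, only $k_3=0$, $j=2$ contributes nontrivially to the $\oBcomp{t}{H}\cdot\nabla_H\skjcomp{3}$ piece (by the structure of $a_{k,j}$ in \autoref{remark coefficients }), and here we again apply \eqref{poincare thin layer} to $\oBcomp{t}{H}$ to absorb the derivative explosion, obtaining a bound of the form $\eps^2\|\nabla\oB{t}\|_\eps^2 \cdot \sum |k|^2|\tkjcomp{2}|^2 \lesssim N^{4-\gamma}\|\nabla\oA{t}\|_\eps^2$, using the identity $\|\nabla\oB{t}\|_\eps=\|\curl\oA{t}\|_\eps=\|\nabla\oA{t}\|_\eps$ from \autoref{equivalence norms}. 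The $\Lambda^{\eps}_{\rho}$ drift is treated by Young with $\xi_2$ and \autoref{convergence properties matrix}. The $-2\sum_{k_3\neq 0}\|\overline{\LL_{\skj}\oB{t}\cdot e_3}\|^2$ appearing on the right is kept as a negative buffer coming from an orthogonal projection inequality $\|M_\eps f\|_\eps^2 \le \|f\|_\eps^2$ (so that $\|\oBcomp{t}{3}\|_\eps^2$ picks up the full sum but we only need $M_\eps$ of the oscillatory stretching on the martingale side). The main obstacle across both estimates is the careful bookkeeping of which bilinear products survive after invoking \autoref{lemma covariance operator 2}, and ensuring that the Poincar\'e trick is applied exactly on the fluctuation components, since no analogous bound is available for the mean part — this is precisely why the stretching terms involving $\mB{t}$ and $\mA{t}$ must be kept and estimated with the small prefactors $N^{2-\beta}$ provided by \autoref{HP noise}.
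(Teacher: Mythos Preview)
Your outline captures most of the structure correctly — the use of the curl/Biot--Savart identities to pass to $\oA{t}$, the mirror--symmetry cancellation of the cross terms via \autoref{lemma covariance operator 2}, and the crucial Poincar\'e trick \eqref{poincare thin layer} on the stretching of the fluctuation — but there is one genuine gap that prevents you from reaching the stated inequality.

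You write that ``the $\Lambda^{\eps}$ drift is nonpositive after integration by parts and can be discarded'', and then separately bound the transport part $\sum_{k,j}\|\skj\cdot\nabla\oA{t}\|_{\eps}^2$ of the quadratic variation. This is the wrong bookkeeping. The point is that these two pieces cancel \emph{exactly}: one has
\[
-\langle \oA{t},\Lambda^{\eps}\oA{t}\rangle_{\eps}=\eta_T^{\eps}\|\nabla_H\oA{t}\|_{\eps}^2+\eta_R^{\eps}\|\partial_3\oA{t}\|_{\eps}^2=\sumkj\|\skj\cdot\nabla\oA{t}\|_{\eps}^2,
\]
so in the paper's decomposition the term $I^1_t$ (drift) absorbs the transport part of $I^2_t+I^4_t$ (quadratic variation) identically, leaving only the pure stretching $\sum\|D\skj\oA{t}\|_{\eps}^2$ as the residual $O(1)$ contribution. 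It is \emph{this} term, after Poincar\'e, that yields the coefficient $\zeta_{H,0}/C_{1,H}^2$. If instead you discard $\Lambda^{\eps}$ and bound transport by $\sum|\tkj|^2\|\nabla\oA{t}\|_{\eps}^2\approx (\zeta_{H,2}/C_{1,H}^2)\|\nabla\oA{t}\|_{\eps}^2$, you pick up an additional $\zeta_{H,2}/C_{1,H}^2=\eta_T$ in the principal coefficient, obtaining $(\zeta_{H,0}+\zeta_{H,2})/C_{1,H}^2$ rather than the stated $\zeta_{H,0}/C_{1,H}^2$. This is not cosmetic: the constraint $C_{1,H}>\sqrt{\zeta_{H,0}/\eta}$ in \autoref{HP noise} is calibrated precisely so that $\zeta_{H,0}/C_{1,H}^2<\eta$, which is what allows the absorption in \autoref{proposition a priori estimate 1}. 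With your extra $\zeta_{H,2}/C_{1,H}^2$ you would need a stronger hypothesis on $C_{1,H}$ and would not prove the lemma as stated. (The same cancellation mechanism operates in the $\oBcomp{t}{3}$ estimate.)

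A smaller point: in the $\oBcomp{t}{3}$ stretching you invoke Poincar\'e on $\oB{t}$ and then the identity ``$\|\nabla\oB{t}\|_{\eps}=\|\nabla\oA{t}\|_{\eps}$'', which is false. What is true (and what the paper uses, via \autoref{equivalence norms}) is $\|\oB{t}\|_{\eps}=\|\nabla\oA{t}\|_{\eps}$; no Poincar\'e is needed for the $k_3=0,\ j=2$ stretching of $\oBcomp{t}{3}$, since $\sum_{k_3=0,j=2}|k|^2|\tkjcomp{2}|^2\lesssim N^{4-\gamma}$ is already small.
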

\begin{proof}
Since by \autoref{equivalence norms} it holds\begin{align}\label{equivalence norm A'}
    \norm{\oA{_t}}_{\eps}^2=\norm{(-\Delta)^{-1/2}\oB{_t}}_{\eps}^2,\quad  \norm{\nabla\oA{_t}}_{\eps}^2=\norm{\oB{_t}}_{\eps}^2,
\end{align} we apply It\^o formula to $\norm{(-\Delta)^{-1/2}\oB{_t}}_{\eps}^2$ obtaining    
\begin{align*}
d\norm{\oA{t}}^2_{\eps} = dW^{'}_t+2(-I_t^1+I_t^2+I_t^3+I_t^4+I^5_t+I^6_t)dt,
\end{align*}
where we denote by \begin{align*}
    dW'_t&=2\sumkmeanj  \langle \mathcal{L}_{\skj} \oB{t},(-\Delta)^{-1}\oB{t} \rangle_{\eps}dW^{k,j}_t  +2\sumkoscj  \langle \mathcal{L}_{\skj} \mB{t},(-\Delta)^{-1}\oB{t} \rangle_{\eps}dW^{k,j}_t \notag\\
& +2\sumkoscj  \langle \left(\mathcal{L}_{\skj}\oB{t}\right)',(-\Delta)^{-1}\oB{t} \rangle_{\eps}dW^{k,j}_t,\\
I^1_t&=\eta \langle \nabla\oB{t},(-\Delta)^{-1}\nabla\oB{t}\rangle_{\eps}+\sumkj\langle \oB{t}, (-\Delta)^{-1}\operatorname{div}(\skj\otimes\smkj \nabla \oB{t})\rangle_{\eps},\\
I^2_t&=\sumkmeanj \langle \LL_{\skj}\oB{t}, (-\Delta)^{-1}\LL_{\smkj}\oB{t}\rangle_{\eps}, \\ I^3_t&= \sumkoscj \langle \LL_{\skj} \mB{t}, (-\Delta)^{-1}\LL_{\smkj}\mB{t}\rangle_{\eps}, \\ I^4_t&=\sumkoscj \langle \left(\LL_{\skj}\oB{t}\right)', (-\Delta)^{-1}\left(\LL_{\smkj}\oB{t}\right)'\rangle_{\eps}, \\ I^5_t& =\sumkoscj \langle \LL_{\skj} \mB{t}, (-\Delta)^{-1}\left(\LL_{\smkj}\oB{t}\right)'\rangle_{\eps},\\ 
I_t^6&=\langle \Lambda^{\eps}_{\rho}\oB{t},(-\Delta)^{-1}\oB{t}\rangle_{\eps}+\rho\sumkmeancov\langle \mathcal{L}_{\sk{1}},(-\Delta)^{-1}\mathcal{L}_{\smk{2}}\rangle_{\eps}+\langle \mathcal{L}_{\sk{2}},(-\Delta)^{-1}\mathcal{L}_{\smk{1}}\rangle_{\eps}.
\end{align*}
In order to study the terms involved in previous relation we recall that by \autoref{Inversion of Lie derivative}
\begin{align*}
\curl(\skj\cdot\nabla\oA{t}+D\skj\oA{t})=\skj\cdot\nabla\oB{t}-\oB{t}\cdot \nabla \skj
\end{align*}
and analogously for the others. First we start rewritng the martingale term. By previous relation, exploiting the fact that $\curl \curl f=-\Delta f$ if $f$ is a divergence free vector field and derivatives and the operator $(-\Delta)^{-1}$ commute in the periodic framework we get 
\begin{align}\label{martingale term A'}
 dW'_t&=    2\sumkj  \langle \LL_{\skj}\oB{t},(-\Delta)^{-1}\oB{t} \rangle_{\eps}dW^{k,j}_t  +2\sumkoscj  \langle \LL_{\skj} \mB{t},(-\Delta)^{-1}\oB{t} \rangle_{\eps}dW^{k,j}_t \notag\\ & =  2\sumkj  \langle \curl\left(\skj\cdot \nabla \oA{t}+D\skj\oA{t}\right),(-\Delta)^{-1}\curl\oA{t} \rangle_{\eps}dW^{k,j}_t  \notag\\
&+2\sumkoscj  \langle \curl\left(\skj\cdot \nabla \mA{t}+D\skj\mA{t}\right),(-\Delta)^{-1}\curl\oA{t} \rangle_{\eps}dW^{k,j}_t \notag\\ & =2\left(\sumkj  \langle \skj\cdot \nabla \oA{t}+D\skj\oA{t},\oA{t} \rangle_{\eps}dW^{k,j}_t  +\sumkoscj  \langle \skj\cdot \nabla \mA{t}+D\skj\mA{t},\oA{t} \rangle_{\eps}dW^{k,j}_t\right)\notag\\ & =2\left(\sumkj  \langle D\skj\oA{t},\oA{t} \rangle_{\eps}dW^{k,j}_t  +\sumkoscj  \langle \skj\cdot \nabla \mA{t}+D\skj\mA{t},\oA{t} \rangle_{\eps}dW^{k,j}_t\right).
\end{align}
Now we can analyze the terms $I^1_t,\ I^2_t,\ I^3_t,\ I^4_t,\ I^5_t,\ I^6_t$. First we observe that due to the fact that derivatives and the operator $(-\Delta)^{-1}$ commute in the periodic framework we easily obtain
\begin{align}\label{estimate I_1}
    I^1_t &= \eta \norm{\nabla\oA{t}}_{\eps}^2+\sumkj\norm{\skj\cdot\nabla\oA{t}}^2_{\eps}.
\end{align}
Exploiting the fact that $\curl \curl f=-\Delta f$ if $f$ is a divergence free vector field we get
\begin{align*}
    I^2_t &=\sumkmeanj \langle \curl(\skj\cdot\nabla\oA{t}+D\skj\oA{t}), (-\Delta)^{-1}\left(\curl(\smkj\cdot\nabla\oA{t}+D\smkj\oA{t})\right)\rangle_{\eps}\\ & =\sumkmeanj \langle \curl\left( P_{\eps}\left[Q\left[\skj\cdot\nabla\oA{t}+D\skj\oA{t})\right]\right]\right), (-\Delta)^{-1}\curl\left( P_{\eps}\left[Q\left[\skj\cdot\nabla\oA{t}+D\skj\oA{t})\right]\right]\right)\rangle_{\eps}\\ & = \sumkmeanj\norm{P_{\eps}\left[Q\left[\skj\cdot\nabla\oA{t}+D\skj\oA{t}\right]\right]}^2_{\eps}\\ & \leq \sumkmeanj\norm{\skj\cdot\nabla\oA{t}+D\skj\oA{t}}^2_{\eps}= \sumkmeanj\norm{\skj\cdot\nabla\oA{t}}^2_{\eps}+\sumkmeanj\norm{D\skj\oA{t}}^2_{\eps},
\end{align*}
where in the last step we exploited \eqref{mirror symmetry product 0}. By H\"older inequality and Poincaré inequality \eqref{poincare thin layer} the last term in the relation above can be estimated by
\begin{align*}
\sumkmeanj\norm{D\skj\oA{t}}^2_{\eps}&  \leq \sumkmeanj (\tkj)^2\lvert k\rvert^2\norm{\oA{t}}^2_{\eps}\\ & \leq \eps^2\norm{\partial_3\oA{t}}^2_{\eps}\left(\sumkmeanjuno  (\tkj)^2\lvert k\rvert^2+\sumkmeanjdue  (\tkj)^2\lvert k\rvert^2\right)\\ & \leq \eps^2\norm{\nabla\oA{t}}^2_{\eps}\left(\sumH  \frac{1}{C^2_{1,H}}+\sumH  \frac{1}{C^2_{2,H}\lvert k_H\rvert^{\gamma-2}}\right)\\ & = \norm{\nabla\oA{t}}^2_{\eps}\left(\frac{\zeta^N_{H,0}}{C_{1,H}^2}+\frac{\zeta^N_{H,\gamma-2}}{N^{\gamma-2}C_{2,H}^2}\right)=\norm{\nabla\oA{t}}^2_{\eps}\left(\frac{\zeta_{H,0}}{C_{1,H}^2}+O(N^{-1})\right).
\end{align*}
Therefore 
\begin{align}\label{estimate I2}
    I^2_t \leq \sumkmeanj\norm{\skj\cdot\nabla\oA{t}}^2_{\eps}+\norm{\nabla\oA{t}}^2_{\eps}\left(\frac{\zeta_{H,0}}{C_{1,H}^2}+O(N^{-1})\right).
\end{align}
Analogously we can show that \begin{align}\label{estimate I_3}
    I^3_t & \leq \sumkoscj\norm{\skj\cdot\nabla\mA{t}}^2_{\eps}+\sumkoscj\norm{D\skj\mA{t}}^2_{\eps}\notag\\ & \leq \sumkoscj \left\lvert\tkj\right\rvert^2 \norm{\nabla \mA{t}}^2_{\eps}+\sumkoscj \left\lvert\tkj\right\rvert^2 \lvert k\rvert^2 \norm{\mA{t}}^2_{\eps}\notag \\ & \leq  \frac{(\zeta_{H,0} N^2+O(N))\lVert \nabla \mA{t}\rVert_{\eps}^2}{C_V^2} \sum_{k_3\in N\Z\setminus\{0\}}\frac{1}{\lvert k_3\rvert^{\beta}}+\frac{(\zeta_{H,0}N^2+O(N))\norm{\mA{t}}_{\eps}^2}{C_V^2}\sum_{k_3\in N\Z\setminus\{0\}}\frac{1}{\lvert k_3\rvert^{\beta-2}}\notag\\ & =\left(2\frac{\zeta_{\beta}\zeta_{H,0} }{N^{\beta-2}C_V^2}+O(N^{1-\beta})\right)\lVert \nabla \mA{t}\rVert_{\eps}^2+\left(2\frac{\zeta_{\beta-2}\zeta_{H,0}}{N^{\beta-4}C_V^2}+O(N^{3-\beta})\right)\norm{\mA{t}}_{\eps}^2.
\end{align}
The analysis of $I^4_t$ is similar recalling that by \autoref{properties of M N} the operator $N_{\eps}$ and the $\curl$ commute. Therefore we have, arguing as above,
\begin{align*}
I^4_t&=\sumkoscj\norm{P_{\eps}\left[Q\left[\left(\skj\cdot\nabla\oA{t}+D\skj\oA{t}\right)'\right]\right]}^2_{\eps}\\ & \leq \sumkoscj\norm{\left(\skj\cdot\nabla\oA{t}+D\skj\oA{t}\right)'}^2_{\eps}\\ & =\sumkoscj\norm{\skj\cdot\nabla\oA{t}+D\skj\oA{t}}^2_{\eps}-\sumkoscj\norm{\overline{\skj\cdot\nabla\oA{t}+D\skj\oA{t}}}^2_{\eps}\\ & \leq \sumkoscj\norm{\skj\cdot\nabla\oA{t}}^2_{\eps}+\sumkoscj\norm{D\skj\oA{t}}^2_{\eps}.
\end{align*}
In the last step we simply neglected the negative term and exploited \eqref{mirror symmetry product 0}. Arguing as when treating $I^2_t$, we can show by H\"older and Poincaré inequality in the thin layer \eqref{poincare thin layer} that \begin{align*}
   \sumkoscj\norm{D\skj\oA{t}}^2_{\eps} & \leq  \eps^2\sum_{\substack{k\in \Z^{3,N}_0\\ k_3\neq 0,\quad j\in\{1,2\}\\ N\leq \lvert k_H\rvert\leq 2N}} \frac{1}{C_V^2 \lvert k\rvert^{\beta-2}}\norm{\nabla \oA{t}}_{\eps}^2\\ & \leq \frac{\left(\zeta_{H,0}+O(N^{-1})\right)\norm{\nabla \oA{t}}_{\eps}^2}{C_V^2}\sum_{k_3\in N\Z\setminus\{0\}}\frac{1}{\lvert k_3\rvert^{\beta-2}}\\ & =\left(2\frac{\zeta_{\beta-2}\zeta_{H,0}}{N^{\beta-2}C_V^2}+O(N^{1-\beta})\right)\norm{\nabla \oA{t}}_{\eps}^2.
\end{align*}
In particular, it holds \begin{align}\label{estimate I_4}
    I^4_t \leq \sumkoscj\norm{\skj\cdot\nabla\oA{t}}^2_{\eps}+\left(2\frac{\zeta_{\beta-2}\zeta_{H,0}}{N^{\beta-2}C_V^2}+O(N^{1-\beta})\right)\norm{\nabla \oA{t}}_{\eps}^2.
\end{align}
For what concerns $I^5_t$, applying Cauchy-Schwarz and Young's inequalities we get 
\begin{align*}
    I^5_t&=\sumkoscj \langle P_{\eps}\left[Q\left[\skj\cdot\nabla\mA{t}+D\skj\mA{t}\right]\right], P_{\eps}\left[Q\left[\left(\smkj\cdot\nabla\oA{t}+D\smkj\oA{t}\right)'\right]\right]\rangle_{\eps}\\ & \leq \frac{I^3_t+I^4_t}{2}\\ & \leq \left(\frac{\zeta_{\beta}\zeta_{H,0} }{N^{\beta-2}C_V^2}+O(N^{1-\beta})\right)\lVert \nabla \mA{t}\rVert_{\eps}^2+\left(\frac{\zeta_{\beta-2}\zeta_{H,0}}{N^{\beta-4}C_V^2}+O(N^{3-\beta})\right)\norm{\mA{t}}_{\eps}^2\\ &+\left(\frac{\zeta_{\beta-2}\zeta_{H,0}}{N^{\beta-2}C_V^2}+O(N^{1-\beta})\right)\norm{\nabla \oA{t}}_{\eps}^2+\frac{1}{2}\sumkoscj\norm{\skj\cdot\nabla\oA{t}}^2_{\eps}.
\end{align*}
Let us analyze the last term. It holds
\begin{align*}
  \frac{1}{2}\sumkoscj\norm{\skj\cdot\nabla\oA{t}}^2_{\eps}& \leq \frac{\norm{\nabla\oA{t}}_{\eps}^2}{2C_V^2}\sum_{\substack{k\in \Z^{3,N}_0\\ k_3\neq 0,\quad j\in\{1,2\}\\ N\leq \lvert k_H\rvert\leq 2N}}\frac{1}{\lvert k\rvert^{\beta}}\\ & \leq \frac{\left(\zeta_{H,0}N^2+O(N)\right)\norm{\nabla\oA{t}}_{\eps}^2}{2C_V^2}\sum_{k_3\in N\Z\setminus\{0\}}\frac{1}{\lvert k_3\rvert^{\beta}}\\ & =\left(\frac{\zeta_{\beta}\zeta_{H,0}}{N^{\beta-2}C_V^2}+O(N^{1-\beta})\right)\norm{\nabla\oA{t}}_{\eps}^2.
\end{align*}
In conclusion it holds
\begin{align}\label{estimate I_5}
I^5_t&\leq \left(\frac{\zeta_{\beta}\zeta_{H,0} }{N^{\beta-2}C_V^2}+O(N^{1-\beta})\right)\lVert \nabla \mA{t}\rVert_{\eps}^2+\left(\frac{\zeta_{\beta-2}\zeta_{H,0}}{N^{\beta-4}C_V^2}+O(N^{3-\beta})\right)\norm{\mA{t}}_{\eps}^2\notag\\ &+\left(2\frac{\zeta_{\beta}\zeta_{H,0}}{N^{\beta-2}C_V^2}+O(N^{1-\beta})\right)\norm{\nabla \oA{t}}_{\eps}^2.    
\end{align}
Lastly we consider $I^6_t$: $\langle \Lambda^{\eps}_{\rho}\oB{t},(-\Delta)^{-1}\oB{t}\rangle_{\eps}$ can be estimated easily due to \autoref{convergence properties matrix} and \eqref{property 1 norm}, obtaining by Cauchy-Schwarz inequality
\begin{align*}
\langle \Lambda^{\eps}_{\rho}\oB{t},(-\Delta)^{-1}\oB{t}\rangle_{\eps}&\leq \frac{2\sqrt{2}\lvert \rho\rvert \zeta^N_{H,\gamma/2} N^{2-\gamma/2} }{C_{1,H}C_{2,H}}\norm{(-\Delta)^{-1/2} \oB{t}}_{\eps}\norm{\nabla \oB{t}}_{H^{-1}(\T^3_{\eps})}\\ & =\frac{2\sqrt{2}\lvert \rho\rvert \zeta^N_{H,\gamma/2} N^{2-\gamma/2} }{C_{1,H}C_{2,H}}\norm{\oA{t}}_{\eps}\norm{\nabla \oA{t}}_{\eps}.
\end{align*}
The other terms can be treated easily by Cauchy-Schwartz inequality and \autoref{Inversion of Lie derivative}. Indeed for each $\xi_1>0$ and $k\in \Z_0^{3,N}$ such that $k_3=0$ we have
\begin{align*}
    \langle \mathcal{L}_{\sk{1}},(-\Delta)^{-1}\mathcal{L}_{\smk{2}}\rangle_{\eps}&=\langle \operatorname{curl}\left(\sk{1}\cdot\nabla \oA{t}+D\sk{1}\oA{t}\right),(-\Delta)^{-1}\operatorname{curl}\left(\smk{2}\cdot\nabla \oA{t}+D\smk{2}\oA{t}\right)\rangle_{\eps}\\ & =\langle \operatorname{curl}\left(P_\eps[Q[\sk{1}\cdot\nabla \oA{t}+D\sk{1}\oA{t}]]\right),(-\Delta)^{-1}\operatorname{curl}\left(P_\eps[Q[\smk{2}\cdot\nabla \oA{t}+D\smk{2}\oA{t}]]\right)\rangle_{\eps}\\ & =\langle \left(P_\eps[Q[\sk{1}\cdot\nabla \oA{t}+D\sk{1}\oA{t}]]\right),\left(P_\eps[Q[\smk{2}\cdot\nabla \oA{t}+D\smk{2}\oA{t}]]\right)\rangle_{\eps}\\ & \leq \frac{\norm{\sk{1}\cdot\nabla \oA{t}+D\sk{1}\oA{t}}_{\eps}^2}{2\xi_1}+ \frac{\xi_1 \norm{\sk{2}\cdot\nabla \oA{t}+D\sk{2}\oA{t}}_{\eps}^2}{2}.
\end{align*}
So far, arguing when treating $I^2_t$, we proved that for each $\xi_1>0$
\begin{align}\label{estimate I6 prefinal}
   I^6_t&\leq \frac{2\sqrt{2}\lvert \rho\rvert \zeta^N_{H,\gamma/2} }{N^{\gamma/2-2}C_{1,H}C_{2,H}}\norm{\oA{t}}_{\eps}\norm{\nabla \oA{t}}_{\eps}+\frac{\lvert \rho\rvert}{\xi_1} \sumkmeanjuno \norm{\skj\cdot \nabla \oA{t}}^2_{\eps}\notag\\ &+ \lvert \rho\rvert\xi_1 \sumkmeanjdue \norm{\skj\cdot \nabla \oA{t}}^2_{\eps}+\norm{\nabla \oA{t}}_{\eps}^2\left(\frac{\lvert \rho\rvert\zeta_{H,0}}{\xi_1 C_{1,H}^2}+O(N^{-1})\right).
\end{align}
Let us analyze $\sumkmeanjuno \norm{\skj\cdot \nabla \oA{t}}^2_{\eps}$ and $\sumkmeanjdue \norm{\skj\cdot \nabla \oA{t}}^2_{\eps}$. We have
\begin{align*}
\sumkmeanjuno\norm{\skj\cdot\nabla\oA{t}}^2_{\eps}& \leq \frac{\norm{\nabla\oA{t}}_{\eps}^2}{C_{1,H}^2}\sumH \frac{1}{\lvert k_H\rvert^2}\\ & =\left(\frac{\zeta_{H,2}}{C_{1,H}^2}+O(N^{-1})\right)\norm{\nabla\oA{t}}_{\eps}^2
\end{align*}
and analogously
\begin{align*}
\sumkmeanjdue\norm{\skj\cdot\nabla\oA{t}}^2_{\eps}& \leq \left(\frac{\zeta_{H,\gamma}}{N^{\gamma-2}C_{1,H}^2}+O(N^{1-\gamma})\right)\norm{\nabla\oA{t}}_{\eps}^2.
\end{align*}
In conclusion, applying Poincaré inequality \eqref{poincare thin layer} to \eqref{estimate I6 prefinal}, it holds
\begin{align}\label{estimate I_6}
I^6_t&\leq \frac{\lvert \rho\rvert}{\xi_1}\left(\frac{\zeta_{H,0}+\zeta_{H,2}}{C_{1,H}^2}+O(N^{-1})\right)\norm{\nabla \oA{t}}_{\eps}^2.    
\end{align}
Combining \eqref{martingale term A'}, \eqref{estimate I_1}, \eqref{estimate I2}, \eqref{estimate I_3}, \eqref{estimate I_4}, \eqref{estimate I_5}, \eqref{estimate I_6} we get relation \eqref{ito formula A'}.\\
Now we move on to the analysis of $\norm{\oBcomp{t}{3}}^2_{\eps}$ which is similar, actually easier, than the previous one. Indeed, by It\^o formula it holds:
\begin{align*}
    d\norm{\oBcomp{t}{3}}^2_{\eps}=d\Tilde{W}'_t+2(-J^1_t+J^2_t+J^3_t+J^4_t+J^5_t+J^6_t)dt,
\end{align*}
where we denote by
\begin{align*}
    d\Tilde{W}'_t &=2\sumkmeanj  \langle \skj\cdot \nabla \oBcomp{t}{3}-\oB{t}\cdot\nabla\skjcomp{3},\oBcomp{t}{3} \rangle_{\eps}dW^{k,j}_t  \notag\\
&+2\sumkoscj  \langle \skj\cdot \nabla \mBcomp{t}{3}-\mB{t}\cdot\nabla\skjcomp{3},\oBcomp{t}{3} \rangle_{\eps}dW^{k,j}_t \notag\\
& +2\sumkoscj  \langle \left(\skj\cdot \nabla \oBcomp{t}{3}-\oB{t}\cdot\nabla\skjcomp{3}\right)',\oBcomp{t}{3} \rangle_{\eps}dW^{k,j}_t,\\
J^1_t&=\eta \norm{\nabla\oBcomp{t}{3}}_{\eps}^2+\sumkj\norm{\skj\cdot\nabla\oBcomp{t}{3}}_{\eps}^2,\\
J^2_t&=\sumkmeanj \langle \skj\cdot \nabla \oBcomp{t}{3}-\oB{t}\cdot\nabla\skjcomp{3}, \smkj\cdot \nabla \oBcomp{t}{3}-\oB{t}\cdot\nabla\smkjcomp{3}\rangle_{\eps},\\J^3_t&= \sumkoscj \langle \skj\cdot \nabla \mBcomp{t}{3}-\mB{t}\cdot\nabla\skjcomp{3}, \smkj\cdot \nabla \mBcomp{t}{3}-\mB{t}\cdot\nabla\smkjcomp{3}\rangle_{\eps},\\ J^4_t&=\sumkoscj \langle \left(\skj\cdot \nabla \oBcomp{t}{3}-\oB{t}\cdot\nabla\skjcomp{3}\right)', \left(\smkj\cdot \nabla \oBcomp{t}{3}-\oB{t}\cdot\nabla\smkjcomp{3}\right)'\rangle_{\eps}, \\ J^5_t& =\sumkoscj \langle \skj\cdot \nabla \mBcomp{t}{3}-\mB{t}\cdot\nabla\skjcomp{3},\left(\smkj\cdot \nabla \oBcomp{t}{3}-\oB{t}\cdot\nabla\smkjcomp{3}\right)'\rangle_{\eps},\\ J^6_t &=-\langle \operatorname{div}_H\left(\mathcal{R}_{\eps,\gamma} \oBcomp{t}{H}\right),\oBcomp{t}{3}\rangle_{\eps}\\ & +\rho\sumkmeancov\langle \sk{1}\cdot\nabla \oBcomp{t}{3},\smk{2}\cdot\nabla \oBcomp{t}{3}-\oB{t}\cdot\nabla\smkcomp{3}{2}\rangle_{\eps}+\langle \sk{2}\cdot\nabla \oBcomp{t}{3}-\oB{t}\cdot\nabla\skcomp{3}{2} ,\smk{1}\cdot\nabla \oBcomp{t}{3}\rangle_{\eps}.
\end{align*}
Since the $\skj$ are divergence free, the martingale term is equal to
\begin{align}\label{martingale term M3}
d\Tilde{W}'_t&=-2\sum_{\substack{k\in\Z^{3,N}_0\\ j\in \{1,2\}}}  \langle \oB{t}\cdot\nabla\skjcomp{3},\oBcomp{t}{3} \rangle_{\eps}dW^{k,j}_t  +2\sumkoscj  \langle \skj\cdot \nabla \mBcomp{t}{3}-\mB{t}\cdot\nabla\skjcomp{3},\oBcomp{t}{3} \rangle_{\eps}dW^{k,j}_t.    
\end{align}
The analysis of $J^2_t,\ J^3_t,\ J^4_t,\ J^5_t$ is similar to the corresponding terms when estimating $\norm{\oA{t}}_{\eps}^2$ and is based on the fact that the mixed products in each inner product are equal to $0$ thanks to \eqref{mirror symmetry product 0}. In order to treat $J_t^2$ let us preliminarily observe that if $k_3=0$, then $\skjcomp{3}\neq 0$ if and only if $j=2$. Therefore we get
\begin{align}\label{estimate J_2}
J_t^2&=\sumkmeanj\norm{\skj\cdot\nabla\oBcomp{t}{3}-\oB{t}\cdot\nabla\skjcomp{3}}^2_{\eps}\notag\\ & = \sumkmeanj\norm{\skj\cdot\nabla\oBcomp{t}{3}}^2_{\eps}+\sumkmeanjdue\norm{\oB{t}\cdot \nabla\skjcomp{3}}^2_{\eps}\notag\\ & \leq   \sumkmeanj\norm{\skj\cdot\nabla\oBcomp{t}{3}}^2_{\eps}+\sumH\frac{1}{C_{2,H}^2\lvert k_H\rvert^{\gamma-2}}\norm{\oB{t}}^2_{\eps}\notag\\
&\leq \sumkmeanj\norm{\skj\cdot\nabla\oBcomp{t}{3}}^2_{\eps}+\left(\frac{\zeta_{H,\gamma-2}}{N^{\gamma-4}C_{2,H}^2}+O(N^{3-\gamma})\right)\norm{\oB{t}}^2_{\eps}\notag\\ & =\sumkmeanj\norm{\skj\cdot\nabla\oBcomp{t}{3}}^2_{\eps}+\left(\frac{\zeta_{H,\gamma-2}}{N^{\gamma-4}C_{2,H}^2}+O(N^{3-\gamma})\right)\norm{\nabla\oA{t}}^2_{\eps},
\end{align}
exploiting relation \eqref{property 1 norm}  in the last equality.
The analysis of $J_t^3$ is analogous to that of $J_t^2$ and lead us to 
\begin{align}\label{estimate J_3}
J^3_t&\leq \sumkoscj\norm{\skj\cdot\nabla\mBcomp{t}{3}}^2_{\eps}+\left(2\frac{\zeta_{\beta-2}\zeta_{H,0}}{N^{\beta-4}C_{V}^2}+O(N^{3-\beta})\right)\norm{\mB{t}}^2_{\eps}. 
\end{align}
For what concerns $J^4_t$, exploiting the fact that $N_{\eps}$ is a projection on $\Dot L^2(\T^3_{\eps})$ we get easily    
\begin{align*}
    J_t^4&=\sumkoscj\norm{\skj\cdot \nabla \oBcomp{t}{3}-\oB{t}\cdot\nabla\skjcomp{3}}_{\eps}^2-\sumkoscj\norm{\overline{\skj\cdot \nabla \oBcomp{t}{3}-\oB{t}\cdot\nabla\skjcomp{3}}}_{\eps}^2 \\ & =\sumkoscj\left( \norm{\skj\cdot \nabla \oBcomp{t}{3}}_{\eps}^2+\norm{\oB{t}\cdot\nabla\skjcomp{3}}_{\eps}^2-\norm{\overline{\skj\cdot \nabla \oBcomp{t}{3}-\oB{t}\cdot\nabla\skjcomp{3}}}_{\eps}^2\right). 
\end{align*}
Let us analyze the second term in the last sum. We have by H\"older inequality and exploiting our definition of the $\skj$
\begin{align*}
\sumkoscj  \norm{\oB{t}\cdot\nabla\skjcomp{3}}_{\eps}^2 &\leq \sum_{\substack{k\in \Z^{3,N}_0\\ k_3\neq 0,\quad j\in\{1,2\}\\ N\leq \lvert k_H\rvert\leq 2N}} \frac{1}{C_V^2 \lvert k\rvert^{\beta-2}}\norm{ \oB{t}}_{\eps}^2\\ & \leq \left(2\frac{\zeta_{\beta-2}\zeta_{H,0}}{N^{\beta-4}C_V^2}+O(N^{3-\beta})\right)\lVert \nabla \oA{t}\rVert_{\eps}^2. 
\end{align*}
In conclusion we get
\begin{align}\label{estimate J_4}
J_t^4& \leq \sumkoscj\left( \norm{\skj\cdot \nabla \oBcomp{t}{3}}_{\eps}^2-\norm{\overline{\skj\cdot \nabla \oBcomp{t}{3}-\oB{t}\cdot\nabla\skjcomp{3}}}_{\eps}^2\right)+\left(2\frac{\zeta_{\beta-2}\zeta_{H,0}}{N^{\beta-4}C_V^2}+O(N^{3-\beta})\right)\lVert \nabla \oA{t}\rVert_{\eps}^2.    
\end{align}
Exploiting the fact that for each $f,g\in L^2(\T^3_{\eps})$ we have $\langle \overline{f},g'\rangle_{\eps}=0$ we get $J^5_t=0$. Indeed, thanks to the mirror symmetry property of the covariance function \eqref{mirror symmetry product 0} and the definition of the $\skj$ we have
\begin{align*}
    J^5_t&=\sumkoscj \langle \skj\cdot \nabla \mBcomp{t}{3}-\mB{t}\cdot\nabla\skjcomp{3},\smkj\cdot \nabla \oBcomp{t}{3}-\oB{t}\cdot\nabla\smkjcomp{3}\rangle_{\eps}\\ & =\sumkoscj \langle \skj\cdot \nabla \mBcomp{t}{3},\smkj\cdot \nabla \oBcomp{t}{3}\rangle_{\eps}+\sumkoscj \langle \mB{t}\cdot\nabla\skjcomp{3},\oB{t}\cdot\nabla\smkjcomp{3}\rangle_{\eps}\\ & =\sumkoscj \left\lvert\tkj\right\rvert^2\langle a_{k,j}\cdot \nabla \mBcomp{t}{3},a_{-k,j}\cdot \nabla \oBcomp{t}{3}\rangle_{\eps}+\sumkoscj \left\lvert\tkj\right\rvert^2 \left(a_{k,j}^3\right)^2\langle \mB{t}\cdot k, \oB{t}\cdot k\rangle\\ & =0.
\end{align*}
Lastly let us treat $J^6_t$: the term $-\langle \operatorname{div}_H\left(\mathcal{R}_{\eps,\gamma} \oBcomp{t}{H}\right),\oBcomp{t}{3}\rangle_{\eps}$ can be estimated easily due to \autoref{convergence properties matrix} and \eqref{property 1 norm} getting by Cauchy-Schwarz inequality
\begin{align*}
-\langle \operatorname{div}_H\left(\mathcal{R}_{\eps,\gamma} \oBcomp{t}{H}\right),\oBcomp{t}{3}\rangle_{\eps}&=  \langle \mathcal{R}_{\eps,\gamma} \oBcomp{t}{H},\nabla_H\oBcomp{t}{3}\rangle_{\eps}\\ & \leq  \frac{2\sqrt{2}\lvert \rho\rvert \zeta^N_{H,\gamma/2} }{N^{\gamma/2-2}C_{1,H}C_{2,H}}\norm{\nabla \oBcomp{t}{3}}_\eps \norm{\nabla \oA{t}}_\eps.
\end{align*}
The other terms can be treated easily by Cauchy-Schwartz inequality obtaining for each $\xi_2>0$ and $k\in \Z_0^{3,N}$ such that $k_3=0$ 
\begin{align*}
\langle \sk{1}\cdot\nabla \oBcomp{t}{3},\smk{2}\cdot\nabla \oBcomp{t}{3}-\oB{t}\cdot\nabla\smkcomp{3}{2}\rangle_{\eps}\leq \frac{\norm{\sk{1}\cdot\nabla \oBcomp{t}{3}}_{\eps}^2}{2\xi_2}+\frac{\xi_2 \norm{\sk{2}\cdot\nabla \oBcomp{t}{3}-\oB{t}\cdot\nabla\skcomp{3}{2}}_{\eps}^2}{2}.    
\end{align*}
So far, arguing when treating $J^2_t$, we proved that for each $\xi_2>0$
\begin{align}\label{estimate J6 prefinal}
   J^6_t&\leq \frac{2\sqrt{2}\lvert \rho\rvert \zeta^N_{H,\gamma/2}  }{N^{\gamma/2-2} C_{1,H}C_{2,H}}\norm{\nabla\oBcomp{t}{3}}_{\eps}\norm{\nabla \oA{t}}_{\eps}+\frac{\lvert \rho\rvert}{\xi_2} \sumkmeanjuno \norm{\skj\cdot \nabla \oBcomp{t}{3}}^2_{\eps}\notag\\ &+\lvert \rho\rvert\xi_2 \sumkmeanjdue \norm{\skj\cdot \nabla \oBcomp{t}{3}}^2_{\eps}+\left(\frac{\lvert \rho\rvert\xi_2\zeta_{H,\gamma-2}}{N^{\gamma-4}C_{2,H}^2}+O(N^{3-\gamma})\right)\norm{\nabla\oA{t}}^2_{\eps}.
\end{align}
Let us analyze $\sumkmeanjuno \norm{\skj\cdot \nabla \oBcomp{t}{3}}^2_{\eps}$ and $\sumkmeanjdue \norm{\skj\cdot \nabla \oBcomp{t}{3}}^2_{\eps}$. Arguing when treating $I^6_t$ we have
\begin{align*}
\sumkmeanjuno\norm{\skj\cdot\nabla\oBcomp{t}{3}}^2_{\eps}& \leq \left(\frac{\zeta_{H,2}}{C_{1,H}^2}+O(N^{-1})\right)\norm{\nabla\oBcomp{t}{3}}_{\eps}^2,\\
\sumkmeanjdue\norm{\skj\cdot\nabla\oBcomp{t}{3}}^2_{\eps}& \leq \left(\frac{\zeta_{H,\gamma}}{N^{\gamma-2}C_{1,H}^2}+O(N^{1-\gamma})\right)\norm{\nabla\oBcomp{t}{3}}_{\eps}^2.
\end{align*}
In conclusion, applying Young's inequality to the first term in \eqref{estimate J6 prefinal}, it holds
\begin{align}\label{estimate J_6}
J^6_t&\leq \frac{\lvert \rho\rvert}{\xi_2}\left(\frac{\zeta_{H,2}}{ C_{1,H}^2}+\sqrt{2}\frac{\zeta_{H,\gamma/2}}{C_{1,H}C_{2,H}}+O(N^{-1})\right)\norm{\nabla \oBcomp{t}{3}}_{\eps}^2\notag\\ &+\lvert \rho\rvert \xi_2\left(\frac{\zeta_{H,\gamma-2}}{N^{\gamma-4}C_{2,H}^2}+\frac{\zeta_{H,\gamma/2}}{N^{\gamma-4}C_{1,H}C_{2,H}}+O(N^{3-\gamma})\right)\norm{\nabla\oA{t}}^2_{\eps}.    
\end{align}
Combining \eqref{martingale term M3},  \eqref{estimate J_2}, \eqref{estimate J_3}, \eqref{estimate J_4}, \eqref{estimate J_6}, relation \eqref{ito formula M3'} follows. 
\end{proof}
\begin{lemma}\label{ito formula mean}
Assuming \autoref{HP noise}, for each $\xi_3>0$ we have
\begin{align}\label{ito formula barA3}
    d\norm{\mAcomp{t}{3}}_{\eps}^2+2\eta\norm{\nabla\mAcomp{t}{3}}_{\eps}^2 dt&\leq 2\sumkoscj\langle \skj\cdot\nabla \oAcomp{t}{3}+\partial_3\skj\cdot\oA{t},\mAcomp{t}{3}\rangle_{\eps}dW^{k,j}_t\notag\\ &+\left(4\frac{\zeta_{\beta}\zeta_{H,0}}{N^{\beta-2}C_V^2}+O(N^{1-\beta})\right)\norm{\nabla \oAcomp{t}{3}}_{\eps}^2dt+\left(4\frac{\zeta_{\beta-2}\zeta_{H,0}}{N^{\beta-4}C_V^2}+O(N^{3-\beta})\right)\norm{\oA{t}}_{\eps}^2dt,
\end{align}
\begin{align}\label{ito formula barM3}
    &d\norm{\mBcomp{t}{3}}_{\eps}^2+2\eta\norm{\nabla \mBcomp{t}{3}}_{\eps}^2+2\sumkoscj\norm{\skj\cdot\nabla\mBcomp{t}{3} }_{\eps}^2 dt\notag\\ & \leq -2\sumkmeanj\langle \mBcomp{t}{H}\cdot\nabla_H\skjcomp{3},\mBcomp{3}{H}\rangle_{\eps}dW^{k,j}_t+2\sumkoscj\langle \skj\cdot\nabla\oBcomp{t}{3}-\oB{t}\cdot\nabla\skjcomp{3},\mBcomp{t}{3}\rangle_{\eps}dW^{k,j}_t\notag\\ &+2\left(\frac{\zeta_{H,\gamma-2}}{N^{\gamma-4}C_{2,H}^2}+\lvert \rho\rvert\xi_3 \frac{\zeta_{H,\gamma-2}}{N^{\gamma-4}C_{2,H}^2}+\sqrt{2}\lvert \rho\rvert \xi_3\frac{\zeta_{H,\gamma/2}}{N^{\gamma-4}C_{1,H}C_{2,H}}+O(N^{3-\gamma})\right)\norm{\nabla \mAcomp{t}{3}}_{\eps}^2 dt\notag\\ & +2\sumkoscj\norm{\overline{\skj\cdot\nabla\oBcomp{t}{3}-\oB{t}\cdot\nabla\skjcomp{3}}}_{\eps}^2dt +2\frac{\lvert \rho\rvert}{\xi_3} \left(\frac{\zeta_{H,2}}{C_{1,H}^2}+\sqrt{2}\frac{\zeta_{H,\gamma/2}}{C_{1,H}C_{2,H}}+O(N^{-1})\right)\norm{\nabla\mBcomp{t}{3}}_{\eps}^2 dt. 
\end{align}
\end{lemma}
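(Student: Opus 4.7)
The argument runs in close parallel to the proof of \autoref{ito formula oscillation}, with the roles of $\oB{t}$ and $\mB{t}$ interchanged and the crucial simplification that $\mB{t}$ and $\mAcomp{t}{3}$ are independent of $x_{3}$. First I would derive the scalar SPDE satisfied by $\mAcomp{t}{3}$ by applying the Biot-Savart operator $K_{\eps}$ to \eqref{Introductory equation Ito mean} and extracting the third component, using \eqref{property 3 inversion curl} of \autoref{Inversion of Lie derivative} to convert each $k_{3}=0$ Lie-derivative martingale integrand into the pure transport $\skj\cdot\nabla\mAcomp{t}{3}$. Importantly the $\Lambda^{\eps}_{\rho}$ drift drops out of the equation for $\mBcomp{t}{H}$, hence for $\mAcomp{t}{3}$: inspection of the tensor structure in \eqref{uniform bound step 1} of \autoref{convergence properties matrix} shows that $\partial_{l}\overline{Q^{\eps,m,j}_{\rho}}(0)$ with $m\in\{1,2\}$ is supported on $j=3$, so $(\Lambda^{\eps}_{\rho}F)_{m}=-\sum_{l\in\{1,2\}}\partial_{l}\overline{Q^{\eps,m,3}_{\rho}}(0)\partial_{3}F_{l}=0$ whenever $F$ is $x_{3}$-independent. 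For $\mBcomp{t}{3}$ I would take the third component of \eqref{Introductory equation Ito mean} directly: using that for $k_{3}=0$ the vector $a_{k,1}=k_{H}^{\perp}/|k_{H}|$ has $\skjcomp{3}=0$ while $a_{k,2}=e_{3}$ has $\skj^{H}=0$ with $x_{3}$-independent $\skjcomp{3}$, the third components of the two $k_{3}=0$ martingale integrands reduce respectively to $\skj\cdot\nabla\mBcomp{t}{3}$ and $-\mBcomp{t}{H}\cdot\nabla_{H}\skjcomp{3}$, matching exactly the shape appearing in \eqref{ito formula barM3}.

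Next, applying It\^o's formula to the two $L^{2}$-norms, the deterministic drift yields the dissipation $-2\eta\|\nabla\cdot\|_{\eps}^{2}$ plus, from $\Lambda^{\eps}$, the horizontal dissipation $-2\eta^{\eps}_{T}\|\nabla_{H}\cdot\|_{\eps}^{2}$ (the fields are $x_{3}$-independent). The crucial algebraic input is the scalar It\^o-Stratonovich identity, obtained by testing the scalar analogue of \autoref{lemma ito strat corrector} against an $x_{3}$-independent $\phi$ and integrating by parts using that the $\skj$ are divergence-free:
\begin{align*}
\sum_{k,j}\|\skj\cdot\nabla\phi\|_{\eps}^{2}=\eta^{\eps}_{T}\|\nabla_{H}\phi\|_{\eps}^{2}.
\end{align*}
For $\mAcomp{t}{3}$ the $\rho$-cross terms vanish because $\sk{2}\propto e_{3}$ annihilates $\mAcomp{t}{3}$; combining the $\Lambda^{\eps}$ drift with the $k_{3}=0$ transport quadratic variation produces the nonpositive residual $-2\sum_{k_{3}\neq 0,j}\|\skj\cdot\nabla\mAcomp{t}{3}\|_{\eps}^{2}$, which I simply discard. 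For $\mBcomp{t}{3}$ the same algebra gives the analogous residual $-2\sum_{k_{3}\neq 0,j}\|\skj\cdot\nabla\mBcomp{t}{3}\|_{\eps}^{2}$, which is absorbed on the LHS of \eqref{ito formula barM3}; the additional $k_{3}=0$, $j=2$ stretching quadratic variation $\sum\|\mBcomp{t}{H}\cdot\nabla_{H}\skjcomp{3}\|_{\eps}^{2}$ is bounded pointwise and converted into a $\|\nabla\mAcomp{t}{3}\|_{\eps}^{2}$-contribution via the identity $\|\mBcomp{t}{H}\|_{\eps}=\|\nabla\mAcomp{t}{3}\|_{\eps}$ of \autoref{equivalence norms}, producing the $\zeta_{H,\gamma-2}N^{4-\gamma}$ prefactor. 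The $\rho$-cross contributions for $\mBcomp{t}{3}$ and the $\Lambda^{\eps}_{\rho}$ drift, whose third component equals $2\langle\nabla_{H}\mBcomp{t}{3},\mathcal{R}_{\eps,\gamma}\mBcomp{t}{H}\rangle_{\eps}$ after an integration by parts, are handled by Young's inequality with parameter $\xi_{3}$ and the sharp bound from \autoref{convergence properties matrix}, exactly as in the treatment of $J_{t}^{6}$ in the proof of \autoref{ito formula oscillation}.

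Finally, the $k_{3}\neq 0$ quadratic variations are estimated using $\|Q[\overline{\cdot}]\|_{\eps}\leq\|\cdot\|_{\eps}$ together with $\|f+g\|^{2}\leq 2\|f\|^{2}+2\|g\|^{2}$ and the high-frequency sums already used in the treatment of $I_{t}^{3}$ of the previous lemma; for $\mAcomp{t}{3}$ this produces the $\zeta_{\beta},\zeta_{\beta-2}$ prefactors in \eqref{ito formula barA3}, while for $\mBcomp{t}{3}$ the bound naturally retains the full zero-mean projection $\|\overline{\skj\cdot\nabla\oBcomp{t}{3}-\oB{t}\cdot\nabla\skjcomp{3}}\|_{\eps}^{2}$ on the RHS of \eqref{ito formula barM3}. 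I expect the main obstacle to lie not in any individual estimate but in the careful bookkeeping of the cancellation identity of the second paragraph, the correct combination of the $\rho$-correlated quadratic covariations with the $\Lambda^{\eps}_{\rho}$ drift, and the tracking of all $O(N^{-1})$ corrections coming from the Riemann-sum asymptotics $\zeta_{H,j}^{N}=\zeta_{H,j}+O(N^{-1})$. Once these ingredients are organized correctly, every remaining estimate is a direct adaptation of the computations already carried out for $\oA{t}$ and $\oBcomp{t}{3}$ in \autoref{ito formula oscillation}.
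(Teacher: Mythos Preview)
Your proposal is correct and follows essentially the same approach as the paper. The only organizational difference is that you would first derive the scalar SPDE for $\mAcomp{t}{3}$ and then apply It\^o's formula to it, whereas the paper applies It\^o's formula directly to $\norm{(-\Delta)^{-1/2}\mBcomp{t}{H}}_{\eps}^{2}$ and translates each resulting term via the curl identities of \autoref{Inversion of Lie derivative}; the cancellation you isolate as $\sum_{k,j}\norm{\skj\cdot\nabla\phi}_{\eps}^{2}=\eta^{\eps}_{T}\norm{\nabla_{H}\phi}_{\eps}^{2}$ is exactly the content of the paper's equalities $L^{1}_{t}=\eta\norm{\nabla\mAcomp{t}{3}}_{\eps}^{2}+\sumkj\norm{\skjcomp{H}\cdot\nabla_{H}\mAcomp{t}{3}}_{\eps}^{2}$ and $L^{2}_{t}=\sumkmeanj\norm{\skjcomp{H}\cdot\nabla_{H}\mAcomp{t}{3}}_{\eps}^{2}$, and your observation that the $\rho$-cross terms vanish for $\mAcomp{t}{3}$ is precisely the computation $L^{4}_{t}=0$.
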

\begin{proof}
    First let us observe that thanks to \eqref{Biot savart independent from third component} and \autoref{equivalence norms}
    \begin{align}\label{equivalence norm barv3}
       \mAcomp{t}{3}&=-\nabla_{H}^{\perp}(-\Delta)^{-1}\mBcomp{t}{H},\quad
       \norm{\mAcomp{t}{3}}_{\eps}^2=  \norm{\left(-\Delta\right)^{-1/2}\mBcomp{t}{H}}_{\eps}^2,\quad\norm{\nabla_H\mAcomp{t}{3}}_{\eps}^2= \norm{\mBcomp{t}{H}}_{\eps}^2.
    \end{align}
Thus we apply It\^o formula to $\norm{\left(-\Delta\right)^{-1/2}\mBcomp{t}{H}}_{\eps}$ obtaining due to \autoref{remark ineffective on 2D functions}
\begin{align*}
    d\norm{\mAcomp{t}{3}}_{\eps}^2=d\overline{W}_t+2(-L^1_t+L^2_t+L^3_t+L^4_t)dt,
\end{align*}
where we denote by 
\begin{align*}
    d\overline{W}_t&=2\sumkmeanj\langle \skjcomp{H}\cdot\nabla_H\mBcomp{t}{H}-\mBcomp{t}{H}\cdot\nabla_H\skjcomp{H},(-\Delta)^{-1}\mBcomp{t}{H}\rangle_{\eps}dW^{k,j}_t\\ & +2\sumkoscj\langle \skj\cdot\nabla\oBcomp{t}{H}-\oB{t}\cdot\nabla\skjcomp{H},(-\Delta)^{-1}\mBcomp{t}{H}\rangle_{\eps}dW^{k,j}_t,\\
    L^1_t&=\eta \langle \nabla_H\mBcomp{t}{H},(-\Delta)^{-1}\nabla_H\mBcomp{t}{H}\rangle_{\eps}+\sumkj\langle \mBcomp{t}{H}, (-\Delta)^{-1}\operatorname{div}_H(\skjcomp{H}\otimes\smkjcomp{H} \nabla_H \mBcomp{t}{H})\rangle_{\eps},\\
    L^2_t&=\sumkmeanj\langle \skjcomp{H}\cdot\nabla_H\mBcomp{t}{H}-\mBcomp{t}{H}\cdot\nabla_H\skjcomp{H},(-\Delta)^{-1}\left(\smkjcomp{H}\cdot\nabla_H\mBcomp{t}{H}-\mBcomp{t}{H}\cdot\nabla_H\smkjcomp{H}\right) \rangle_{\eps}\\ L^3_t &=\sumkoscj  \langle \overline{\skj\cdot\nabla\oBcomp{t}{H}-\oB{t}\cdot\nabla\skjcomp{H}} ,(-\Delta)^{-1}\overline{\smkj\cdot\nabla\oBcomp{t}{H}-\oB{t}\cdot\nabla\smkjcomp{H}}  \rangle_{\eps}\\ L^4_t &=\rho\sumkmeancov \langle \skcomp{1}{H}\cdot\nabla \mBcomp{t}{H}-\mBcomp{t}{H}\cdot\nabla\sk{1},(-\Delta)^{-1}\left(\smkcomp{2}{H}\cdot\nabla \mBcomp{t}{H}-\mBcomp{t}{H}\cdot\nabla\smk{2}\right)\rangle_{\eps}\\ & +\rho\sumkmeancov \langle \skcomp{2}{H}\cdot\nabla \mBcomp{t}{H}-\mBcomp{t}{H}\cdot\nabla\sk{2},(-\Delta)^{-1}\left(\smkcomp{1}{H}\cdot\nabla \mBcomp{t}{H}-\mBcomp{t}{H}\cdot\nabla\smk{1}\right)\rangle_{\eps}.
\end{align*}
Arguing analogously to the first part of \autoref{ito formula oscillation} we can treat all the terms above. Thanks to \eqref{property 2 inversion curl} and \eqref{property 3 inversion curl} it holds
\begin{align}
\skjcomp{H}\cdot\nabla_H\mBcomp{t}{H}-\mBcomp{t}{H}\cdot\nabla\skjcomp{H}&=\nabla_H^{\perp}(\skjcomp{H}\cdot \nabla_H \mAcomp{t}{3}) & \text{if } k_3=0\label{formula 1 inversione curl},\\
\skj\cdot\nabla\oBcomp{t}{H}-\oB{t}\cdot\nabla\skjcomp{H}&=\left(\curl\left(\skj\cdot\nabla \oA{t}+D\skj\oA{t}\right)\right)^H& \text{if } k_3\neq 0 \label{formula due inversione curl}.
\end{align}
In particular, considering the mean on the third direction of $\skj\cdot\nabla\oBcomp{t}{H}-\oB{t}\cdot\nabla\skjcomp{H}$, we have
\begin{align}\label{formula tre inversione curl}
 \overline{\skj\cdot\nabla\oBcomp{t}{H}-\oB{t}\cdot\nabla\skjcomp{H}}&=-\nabla_H^{\perp}\left(\overline{\skj\cdot\nabla \oAcomp{t}{3}+\partial_3\skj\cdot\oA{t}} \right) & \text{if } k_3\neq 0.  
\end{align}
Exploiting these relations we can obtain the required estimates. Let us start analyzing $d\overline{W}_t$.
\begin{align}\label{martingale term vbar}
d\overline{W}_t &= 2\sumkmeanj\langle \nabla_H^{\perp}(\skjcomp{H}\cdot \nabla_H \mAcomp{t}{3}),(-\Delta)^{-1}\nabla_H^{\perp}\mAcomp{t}{3}\rangle_{\eps}dW^{k,j}_t\notag\\ & +2\sumkoscj\langle \left(\curl\left(\skj\cdot\nabla \oA{t}+D\skj\oA{t}\right)\right)^H,(-\Delta)^{-1}\nabla_H^{\perp}\mAcomp{t}{3}\rangle_{\eps}dW^{k,j}_t \notag\\ & =2 \sumkmeanj\langle \skjcomp{H}\cdot \nabla_H \mAcomp{t}{3},\mAcomp{t}{3}\rangle_{\eps}dW^{k,j}_t\notag\\ & +2\sumkoscj\langle \skj\cdot\nabla \oAcomp{t}{3}+\partial_3\skj\cdot\oA{t},\mAcomp{t}{3}\rangle_{\eps}dW^{k,j}_t\notag\\ & =2\sumkoscj\langle \skj\cdot\nabla \oAcomp{t}{3}+\partial_3\skj\cdot\oA{t},\mAcomp{t}{3}\rangle_{\eps}dW^{k,j}_t.   
\end{align}
Moving to $L^1_t$ we get easily, integrating by parts, that it holds
\begin{align}\label{estimate L1}
   L^1_t&=\eta \norm{\nabla \mAcomp{t}{3}}_{\eps}^2+\sumkj \norm{\skjcomp{H}\cdot\nabla_H\mAcomp{t}{3}  }_{\eps}^2.
\end{align}
Concerning $L^2_t$ we obtain by \eqref{formula 1 inversione curl}
\begin{align}\label{estimate L2}
    L^2_t&=\sumkmeanj\langle \nabla_H^{\perp}(\skjcomp{H}\cdot \nabla_H \mAcomp{t}{3}),(-\Delta)^{-1}\nabla_H^{\perp}(\smkjcomp{H}\cdot \nabla_H \mAcomp{t}{3})  \rangle_{\eps}\notag\\ & =\sumkmeanj \norm{\skjcomp{H}\cdot\nabla_H \mAcomp{t}{3}}_{\eps}^2.
\end{align}
Thanks to \eqref{formula tre inversione curl} and computations analogous to the ones of \autoref{ito formula oscillation}, we can treat $L^3_t$ obtaining
\begin{align}\label{estimate L3}
    L^3_t&=\sumkoscj  \langle \overline{\skj\cdot\nabla\oBcomp{t}{H}-\oB{t}\cdot\nabla\skjcomp{H}} ,(-\Delta)^{-1}\overline{\smkj\cdot\nabla\oBcomp{t}{H}-\oB{t}\cdot\nabla\smkjcomp{H}}\rangle_{\eps}\notag\\ & = \sumkoscj\langle \nabla_H^{\perp}\left(\overline{\skj\cdot\nabla \oAcomp{t}{3}+\partial_3\skj\cdot\oA{t}} \right), (-\Delta)^{-1}\nabla_H^{\perp}\left(\overline{\smkj\cdot\nabla \oAcomp{t}{3}+\partial_3\smkj\cdot\oA{t}} \right)\rangle_{\eps}\notag\\ & = \sumkoscj \norm{\overline{\skj\cdot\nabla \oAcomp{t}{3}+\partial_3\skj\cdot\oA{t}}}_{\eps}^2\notag\\ & \leq  \sumkoscj \norm{\skj\cdot\nabla \oAcomp{t}{3}+\partial_3\skj\cdot\oA{t}}_{\eps}^2\notag\\ & \leq \left(2\frac{\zeta_{\beta}\zeta_{H,0}}{N^{\beta-2}C_V^2}+O(N^{1-\beta})\right)\norm{\nabla \oAcomp{t}{3}}_{\eps}^2+\left(2\frac{\zeta_{\beta-2}\zeta_{H,0}}{N^{\beta-4}C_V^2}+O(N^{3-\beta})\right)\norm{\oA{t}}_{\eps}^2.
\end{align}
Lastly $L^4_t=0$ thanks to \autoref{Inversion of Lie derivative}. Indeed for each $k\in \Z_0^{3,N}$ such that $k_3=0$ it holds
\begin{align*}
    &\langle \skcomp{1}{H}\cdot\nabla \mBcomp{t}{H}-\mBcomp{t}{H}\cdot\nabla\sk{1},(-\Delta)^{-1}\left(\smkcomp{2}{H}\cdot\nabla \mBcomp{t}{H}-\mBcomp{t}{H}\cdot\nabla\smk{2}\right)\rangle_{\eps}\\ &=\langle \operatorname{curl}\left(\sk{1}\cdot\nabla \mAcomp{t}{3}\right),(-\Delta)^{-1}\operatorname{curl}\left(\smk{2}\cdot\nabla \mAcomp{t}{3}\right)\rangle_{\eps}\\ & =\langle \sk{1}\cdot\nabla \mAcomp{t}{3},\smk{2}\cdot\nabla \mAcomp{t}{3}\rangle_{\eps}=0.
\end{align*}
Relation \eqref{ito formula barA3} then follows combining \eqref{martingale term vbar}, \eqref{estimate L1}, \eqref{estimate L2} and \eqref{estimate L3}. \\
Now we move to the analysis of $\norm{\mBcomp{t}{3}}_{\eps}^2$ which is similar and easier to the previous one. First by It\^o formula we have
\begin{align*}
    d\norm{\mBcomp{t}{3}}_{\eps}^2=d\hat{W}_t+2(-H^1_t+H^2_t+H^3_t+H^4_t)dt,
\end{align*}
where we denote by
\begin{align*}
    d\hat{W}_t&=2\sumkmeanj\langle \skjcomp{H}\cdot\nabla_H\mBcomp{t}{3}-\mBcomp{t}{H}\cdot\nabla_H\skjcomp{3},\mBcomp{t}{3}\rangle_{\eps}dW^{k,j}_t\\ & +2\sumkoscj\langle \skj\cdot\nabla\oBcomp{t}{3}-\oB{t}\cdot\nabla\skjcomp{3},\mBcomp{t}{3}\rangle_{\eps}dW^{k,j}_t,\\
    H^1_t&=\eta \norm{\nabla_H\mBcomp{t}{3}}_{\eps}^2+\sumkj\norm{ \skjcomp{H}\cdot\nabla_H\mBcomp{t}{3}}_{\eps}^2,\\
    H^2_t&=\sumkmeanj\langle \skjcomp{H}\cdot\nabla_H\mBcomp{t}{3}-\mBcomp{t}{H}\cdot\nabla_H\skjcomp{3},\smkjcomp{H}\cdot\nabla_H\mBcomp{t}{3}-\mBcomp{t}{H}\cdot\nabla_H\smkjcomp{3}\rangle_{\eps},\\ H^3_t &=\sumkoscj  \langle \overline{\skj\cdot\nabla\oBcomp{t}{3}-\oB{t}\cdot\nabla\skjcomp{3}} ,\overline{\smkj\cdot\nabla\oBcomp{t}{3}-\oB{t}\cdot\nabla\smkjcomp{3}}  \rangle_{\eps},\\
    H^4_t &=-\langle \operatorname{div}_H\left(\mathcal{R}_{\eps,\gamma} \mBcomp{t}{H}\right),\mBcomp{t}{3}\rangle_{\eps} -\rho\sumkmeancov\langle \sk{1}\cdot\nabla \mBcomp{t}{3},\mB{t}\cdot\nabla\smkcomp{3}{2}\rangle_{\eps}+\langle \mB{t}\cdot\nabla\skcomp{3}{2} ,\smk{1}\cdot\nabla \mBcomp{t}{3}\rangle_{\eps}.
\end{align*}
Analogously to the second part of \autoref{ito formula oscillation} we can treat all the terms above. Let us start from $d\hat{W}_t$. Since the $\skj$ are divergence free we get immediately
\begin{align}\label{estimate martingale Mbar3}
d\hat{W}_t &=-2\sumkmeanj\langle \mBcomp{t}{H}\cdot\nabla_H\skjcomp{3},\mBcomp{t}{3}\rangle_{\eps}dW^{k,j}_t +2\sumkoscj\langle \skj\cdot\nabla\oBcomp{t}{3}-\oB{t}\cdot\nabla\skjcomp{3},\mBcomp{t}{3}\rangle_{\eps}dW^{k,j}_t.    
\end{align}
Thanks to \eqref{mirror symmetry product 0} we get easily by H\"older inequality
\begin{align}\label{estimate H2}
H^2_t &=  \sumkmeanj \norm{\skjcomp{H}\cdot \nabla_H\mBcomp{t}{3}}_{\eps}^2+\sumkmeanj \norm{\mBcomp{t}{H}\cdot \nabla_H\skjcomp{3}}_{\eps}^2\notag\\ & \leq \sumkmeanj \norm{\skjcomp{H}\cdot \nabla_H\mBcomp{t}{3}}_{\eps}^2+\sumkmeanjdue \left\lvert\tkj\right\rvert^2\lvert k\rvert^2 \norm{\mBcomp{t}{H}}_{\eps}^2\notag\\ & \leq \sumkmeanj \norm{\skjcomp{H}\cdot \nabla_H\mBcomp{t}{3}}_{\eps}^2+\sum_{\substack{k_H\in \Z^2_0\\ N\leq\lvert k_H\rvert\leq 2N}}\frac{1}{C_{2,H}^2\lvert k\rvert^{\gamma-2}}\norm{\nabla \mAcomp{t}{3}}_{\eps}^2 \notag \\ & \leq \sumkmeanj \norm{\skjcomp{H}\cdot \nabla_H\mBcomp{t}{3}}_{\eps}^2+\left(\frac{\zeta_{H,\gamma-2}}{N^{\gamma-4}C_{2,H}^2}+O(N^{3-\gamma})\right)\norm{\nabla \mAcomp{t}{3}}_{\eps}^2.
\end{align}
We rewrite $H_t^3$ as 
\begin{align}\label{estimate H3}
H^3_t&=\sumkoscj\norm{\overline{\skj\cdot\nabla\oBcomp{t}{3}-\oB{t}\cdot\nabla\skjcomp{3}}}_{\eps}^2.    
\end{align}
Lastly let us treat $H^4_t$: the term $-\langle \operatorname{div}_H\left(\mathcal{R}_{\eps,\gamma} \mBcomp{t}{H}\right),\mBcomp{t}{3}\rangle_{\eps}$ can be estimated easily due to \autoref{convergence properties matrix} and \eqref{property 2 norm} obtaining by Cauchy-Schwarz inequality
\begin{align*}
-\langle \operatorname{div}_H\left(\mathcal{R}_{\eps,\gamma} \oBcomp{t}{H}\right),\oBcomp{t}{3}\rangle_{\eps}&=  -\langle \mathcal{R}_{\eps,\gamma}\cdot \nabla^{\perp}_H\mAcomp{t}{3},\nabla_H\mBcomp{t}{3}\rangle_{\eps}\\ & \leq  \frac{2\sqrt{2}\lvert \rho\rvert \zeta^N_{H,\gamma/2}  }{N^{\gamma/2-2}C_{1,H}C_{2,H}}\norm{\nabla \mBcomp{t}{3}}_\eps \norm{\nabla \mAcomp{t}{3}}_\eps.
\end{align*}
The other terms can be treated easily by Cauchy-Schwartz inequality. Indeed, for each $\xi_3>0$ and $k\in \Z_0^{3,N}$ such that $k_3=0$ it holds
\begin{align*}
\langle \sk{1}\cdot\nabla \mBcomp{t}{3},\mB{t}\cdot\nabla\smkcomp{3}{2}\rangle_{\eps}\leq \frac{\norm{\sk{1}\cdot\nabla \mBcomp{t}{3}}_{\eps}^2}{2\xi_3}+\frac{\xi_3 \norm{\mBcomp{t}{H}\cdot\nabla_H\skcomp{3}{2}}_{\eps}^2}{2}.    
\end{align*}
In conclusion arguing as when treating $H^2_t$, so far, we have proved that for each $\xi_3>0$
\begin{align}\label{estimate H4 prefinal}
   H^4_t&\leq \frac{2\sqrt{2}\lvert \rho\rvert \zeta^N_{H,\gamma/2} }{N^{\gamma/2-2}C_{1,H}C_{2,H}}\norm{\nabla\mBcomp{t}{3}}_{\eps}\norm{\nabla \mAcomp{t}{3}}_{\eps}+\frac{\lvert \rho\rvert}{\xi_3} \sumkmeanjuno \norm{\skj\cdot \nabla \mBcomp{t}{3}}^2_{\eps}\notag\\ &+\lvert \rho\rvert\xi_3 \left(\frac{\zeta_{H,\gamma-2}}{N^{\gamma-4}C_{2,H}^2}+O(N^{3-\gamma})\right)\norm{\nabla \mAcomp{t}{3}}_{\eps}^2.
\end{align}
Let us analyze $\sumkmeanjuno \norm{\skj\cdot \nabla \mBcomp{t}{3}}^2_{\eps}$. Arguing as in the second part of \autoref{ito formula oscillation} we have
\begin{align*}
\sumkmeanjuno\norm{\skj\cdot\nabla\mBcomp{t}{3}}^2_{\eps}& \leq \left(\frac{\zeta_{H,2}}{C_{1,H}^2}+O(N^{-1})\right)\norm{\nabla\mBcomp{t}{3}}_{\eps}^2.
\end{align*}
In conclusion, applying Young's inequality to the first term of \eqref{estimate H4 prefinal}, it holds
\begin{align}\label{estimate H_4}
H^4_t&\leq \frac{\lvert \rho\rvert}{\xi_3} \left(\frac{\zeta_{H,2}}{C_{1,H}^2}+\sqrt{2}\frac{\zeta_{H,\gamma/2}}{C_{1,H}C_{2,H}}+O(N^{-1})\right)\norm{\nabla\mBcomp{t}{3}}_{\eps}^2\notag\\ &+\lvert \rho\rvert\xi_3 \left(\frac{\zeta_{H,\gamma-2}}{N^{\gamma-4}C_{2,H}^2}+\sqrt{2}\frac{\zeta_{H,\gamma/2}}{N^{\gamma-4}C_{1,H}C_{2,H}}+O(N^{3-\gamma})\right)\norm{\nabla \mAcomp{t}{3}}_{\eps}^2.    
\end{align}
Combining \eqref{estimate martingale Mbar3}, \eqref{estimate H2}, \eqref{estimate H3} and \eqref{estimate H_4} we get relation \eqref{ito formula barM3}.
\end{proof}
\autoref{ito formula oscillation} and \autoref{ito formula mean} imply the following version of \eqref{dream relation}:
\begin{proposition}\label{proposition a priori estimate 1}
Assuming \autoref{HP noise}, for $N>0$ large enough
\begin{align}\label{apriori estimate barA3 CL LH}
\expt{\sup_{t\in [0,T]}\norm{\mAcomp{t}{3}}_{\eps}^2}+\expt{\int_0^T\norm{\nabla\mAcomp{s}{3}}_{\eps}^2 ds} & \lesssim {\norm{B_0^{3,\eps}}_{\eps}^2}+{\norm{\mAcomp{0}{3}}_{\eps}^2}+{\norm{\oA{0}}_{\eps}^2}.\\
\expt{\sup_{t\in [0,T]}\norm{\mBcomp{t}{3}}_{\eps}^2}+\expt{\int_0^T\norm{\nabla\mBcomp{s}{3}}_{\eps}^2 ds} & \lesssim {\norm{B_0^{3,\eps}}_{\eps}^2}+{\norm{\mAcomp{0}{3}}_{\eps}^2}+{\norm{\oA{0}}_{\eps}^2} \label{a priori estimate barB3 CL LH}.\\
\label{apriori estimate A' CL LH}
\expt{\sup_{t\in [0,T]}\norm{\oA{t}}_{\eps}^2}+\expt{\int_0^T\norm{\nabla\oA{s}}_{\eps}^2 ds}&\lesssim {\norm{B_0^{3,\eps}}_{\eps}^2}+{\norm{\mAcomp{0}{3}}_{\eps}^2}+{\norm{\oA{0}}_{\eps}^2}.\\  
\label{apriori estimate B3' CL LH}
\expt{\sup_{t\in [0,T]}\norm{\oBcomp{t}{3}}_{\eps}^2}+\expt{\int_0^T\norm{\nabla\oBcomp{s}{3}}_{\eps}^2 ds}&\lesssim {\norm{B_0^{3,\eps}}_{\eps}^2}+{\norm{\mAcomp{0}{3}}_{\eps}^2}+{\norm{\oA{0}}_{\eps}^2}.  
\end{align}
The hidden constants above all depend only on $\eta,\beta,\gamma,C_V,C_{1,H},C_{2,H},\lvert \rho\rvert,T$.
\end{proposition}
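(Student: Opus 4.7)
The plan is to integrate each of the four differential inequalities from \autoref{ito formula oscillation} and \autoref{ito formula mean} on $[0,t]$, take expectations to eliminate the martingale parts, and exploit a precise cancellation when combining them. The key observation is that adding \eqref{ito formula M3'} and \eqref{ito formula barM3} makes the bad stretching contribution $+2\sumkoscj\norm{\skj\cdot\nabla\mBcomp{t}{3}}_{\eps}^2 dt$ on the right-hand side of \eqref{ito formula M3'} cancel exactly with the identical quantity appearing on the left-hand side of \eqref{ito formula barM3}, and similarly the pair $\mp 2\sumkoscj\norm{\overline{\skj\cdot\nabla\oBcomp{t}{3}-\oB{t}\cdot\nabla\skjcomp{3}}}_{\eps}^2 dt$ cancels; the analogous sum of \eqref{ito formula A'} and \eqref{ito formula barA3} leaves, at the potential level, only the $O(1)$-sized coefficient $2(\zeta_{H,0}/C_{1,H}^2+\ldots)$ in front of $\norm{\nabla\oA{t}}_{\eps}^2$ on the right-hand side.

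Next, I would fix the free parameters $\xi_1,\xi_2,\xi_3$ (introduced to treat the $\rho$-cross terms via Young's inequality) large enough so that each coefficient of the form $\tfrac{\lvert\rho\rvert}{\xi_i}(\ldots)$ multiplying an $\norm{\nabla\,\cdot\,}_{\eps}^2$ becomes as small as needed relative to $\eta$, uniformly in $N$. Combined with \autoref{HP noise}, which guarantees $\zeta_{H,0}/C_{1,H}^2<\eta$, this makes the net coefficient of $\norm{\nabla\oA{t}}_{\eps}^2$ on the right of the combined $A$-level inequality strictly less than $2\eta$, producing a positive net dissipation after absorption into the left-hand side. All remaining source terms on the right-hand side carry prefactors $O(N^{2-\beta})$ or $O(N^{2-\gamma})$ with $\beta,\gamma\ge 4$; using the elementary bounds $\norm{\nabla\mA{t}}_{\eps}^2 \lesssim \norm{\mBcomp{t}{3}}_{\eps}^2+\norm{\nabla\mAcomp{t}{3}}_{\eps}^2$ and $\norm{\mA{t}}_{\eps}^2 \lesssim \norm{\oA{t}}_{\eps}^2+\norm{\mAcomp{t}{3}}_{\eps}^2$ (which follow from \eqref{Biot savart independent from third component} and \autoref{equivalence norms}), these residual terms are absorbed either into the dissipation or into a Grönwall energy for $N$ sufficiently large. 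Summing the $A$-level and the $B^3$-level inequalities gives a closed estimate of the form
\begin{align*}
    \frac{d}{dt}\mathbb{E}[E_t]+\eta'\,\mathbb{E}[D_t] \leq C\,\mathbb{E}[E_t],
\end{align*}
with $E_t=\norm{\oA{t}}_{\eps}^2+\norm{\mAcomp{t}{3}}_{\eps}^2+\norm{\oBcomp{t}{3}}_{\eps}^2+\norm{\mBcomp{t}{3}}_{\eps}^2$ and $D_t$ the corresponding sum of squared gradient norms; Grönwall then yields both the pointwise-in-time expectation bound and, by integration, the dissipation bound. Note that $\norm{E_0}\lesssim \norm{B^{3,\eps}_0}_{\eps}^2+\norm{\mAcomp{0}{3}}_{\eps}^2+\norm{\oA{0}}_{\eps}^2$ since $\norm{B^{3,\eps}_0}_{\eps}^2=\norm{\mBcomp{0}{3}}_{\eps}^2+\norm{\oBcomp{0}{3}}_{\eps}^2$ by orthogonality of $M_\eps$ and $N_\eps$.

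To upgrade to the $\mathbb{E}[\sup_{t\in[0,T]}\norm{\cdot}_{\eps}^2]$ estimate, I would take $\sup_{t\in[0,T]}$ of the integrated Itô inequalities before expectation and control each martingale term via the Burkholder-Davis-Gundy inequality; the resulting quadratic variations are dominated (via Cauchy-Schwarz in the inner product and the same Poincaré / $\norm{D\skj}_{L^{\infty}}$ bounds used in \autoref{ito formula oscillation}) by $\int_0^T \mathbb{E}[D_s]\,ds$ times a fraction of $\mathbb{E}[\sup_{t}\norm{\cdot}_\eps^2]$, and a Young splitting absorbs the latter into the left-hand side. The principal difficulty is the multi-parameter bookkeeping in the absorption step: the coefficients depending on $\xi_1,\xi_2,\xi_3$ and on $N$ interact nontrivially, so one must first fix the $\xi_i$ (depending only on $\eta,\rho,C_{1,H},C_{2,H}$) to tame the $\rho$-cross contributions, and only then choose $N_0$ large enough for the remaining $N$-dependent source terms to be dominated by the residual dissipation $\eta'$.
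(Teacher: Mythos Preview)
Your overall plan is right—the cancellations you identify between \eqref{ito formula M3'} and \eqref{ito formula barM3} are exactly the mechanism that makes the system close, the role of \autoref{HP noise} in absorbing the $2(\zeta_{H,0}/C_{1,H}^2+\ldots)\norm{\nabla\oA{t}}_{\eps}^2$ term is correct, and the BDG upgrade at the end is standard. But there is a real gap in the absorption step.

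You claim that after fixing $\xi_1,\xi_2,\xi_3$ large, ``all remaining source terms carry prefactors $O(N^{2-\beta})$ or $O(N^{2-\gamma})$''. This is not true at the $B^3$-level. In \eqref{ito formula M3'} the coefficient of $\norm{\nabla\oA{t}}_{\eps}^2$ is of order $N^{4-\gamma}(1+\lvert\rho\rvert\xi_2)+N^{4-\beta}$, and in \eqref{ito formula barM3} the coefficient of $\norm{\nabla\mAcomp{t}{3}}_{\eps}^2$ is of order $N^{4-\gamma}(1+\lvert\rho\rvert\xi_3)$. For $\gamma=4$ (the main case, where the $\alpha$-term survives) these are $O(1)$ and \emph{grow linearly} in the very parameters $\xi_2,\xi_3$ you just sent to infinity. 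In an equal-weight sum of all four inequalities, the left-hand side only supplies $2\eta\norm{\nabla\oA{t}}_{\eps}^2$ and $2\eta\norm{\nabla\mAcomp{t}{3}}_{\eps}^2$ of dissipation, so these terms cannot be absorbed by taking $N$ large.

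The paper avoids this by proceeding sequentially rather than summing everything at once: it first closes \eqref{ito formula A'} \emph{alone} (only $\xi_1$ enters there), obtaining a bound on $\expt{\int_0^t\norm{\nabla\oA{s}}_{\eps}^2\,ds}$ in terms of initial data plus $O(N^{2-\beta})$-small multiples of $\int\norm{\nabla\mA{s}}_{\eps}^2\,ds$. This bound is then \emph{substituted} into \eqref{ito formula M3'}, so the dangerous $O(1)(1+\lvert\rho\rvert\xi_2)\norm{\nabla\oA{t}}_{\eps}^2$ term is converted into initial data plus genuinely small residuals before it ever competes with the $B^3$-dissipation. The same cascade handles $\norm{\nabla\mAcomp{t}{3}}_{\eps}^2$ via \eqref{ito formula barA3}. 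Your approach can be repaired by inserting this bootstrap step, or equivalently by multiplying the $A$-level inequalities by a large weight $\lambda$ (chosen after $\xi_2,\xi_3$, before $N_0$)—but as written, the single equal-weight Gr\"onwall does not close.
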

\begin{proof}
In order to simplify the notation, we will denote by $C_{par}$ several constants depending from $\eta$, $\lvert \rho\rvert$, $\gamma$, $\beta$, $C_{1,H}$, $C_{2,H}$, $C_V$, $T$, possibly changing their values line by line but independent from $N$ in this proof. Let us introduce a new parameter $\eta_{aux}$ such that
\begin{align*}
 \frac{\zeta_{H,0}}{C_{1,H}^2}<\eta_{aux}< \eta.   
\end{align*}
Recalling the discussion at the beginning of \autoref{section a priori estimates}, it is easy to show that for each $\eps>0$ the stochastic integrals appearing in \eqref{ito formula A'}, \eqref{ito formula M3'}, \eqref{ito formula barA3}, \eqref{ito formula barM3} are true martingales. 
Let us start considering the  expected value of \eqref{ito formula A'}. We get for each $t\in [0,T]$
\begin{align*}
&\expt{\norm{\oA{t}}_{\eps}^2} +2\eta\expt{\int_0^t\norm{\nabla \oA{s}}^2_{\eps}ds}\notag\\& \leq {\norm{\oA{0}}_{\eps}^2} +\left(6\frac{\zeta_{\beta}\zeta_{H,0} }{N^{\beta-2}C_V^2}+O(N^{1-\beta})\right)\expt{\int_0^t\lVert \nabla \mA{s}\rVert_{\eps}^2ds}+\left(6\frac{\zeta_{\beta-2}\zeta_{H,0}}{N^{\beta-4}C_V^2}+O(N^{3-\beta})\right)\expt{\int_0^t\norm{\mA{s}}_{\eps}^2 ds}\notag\\ & +2\left(\frac{\zeta_{H,0}}{C_{1,H}^2}+\frac{\lvert \rho\rvert}{\xi_1}\frac{\left(\zeta_{H,0}+\zeta_{H,2}\right)}{C_{1,H}^2}+O(N^{-1})\right)\expt{\int_0^t\norm{\nabla\oA{s}}^2_{\eps}ds}.      
\end{align*}
Therefore, if we choose $\xi_1$ large enough, there exists $N_0\in \N$ such that for each $N\geq N_0$ it holds
\begin{align}\label{absorbing estimate}
    \frac{\zeta_{H,0}}{C_{1,H}^2}+\frac{\lvert \rho\rvert}{\xi_1}\frac{\left(\zeta_{H,0}+\zeta_{H,2}\right)}{C_{1,H}^2}+O(N^{-1})\leq \eta_{aux}
\end{align}
and we get 
\begin{align}\label{estimate 1 a priori 1}
\expt{\norm{\oA{t}}_{\eps}^2} +(\eta-\eta_{aux})\expt{\int_0^t\norm{\nabla \oA{s}}^2_{\eps}ds}& \leq {\norm{\oA{0}}_{\eps}^2} +\left(6\frac{\zeta_{\beta}\zeta_{H,0} }{N^{\beta-2}C_V^2}+O(N^{1-\beta})\right)\expt{\int_0^t\lVert \nabla \mA{s}\rVert_{\eps}^2ds}\notag\\&+\left(6\frac{\zeta_{\beta-2}\zeta_{H,0}}{N^{\beta-4}C_V^2}+O(N^{3-\beta})\right)\expt{\int_0^t\norm{\mA{s}}_{\eps}^2 ds}.
\end{align}
Now we take the expected value of \eqref{ito formula barA3} at time $t\in [0,T]$ obtaining by Poincaré inequality \eqref{poincare thin layer}
\begin{align*}
\expt{\norm{\mAcomp{t}{3}}_{\eps}^2}+2\eta\expt{\int_0^t\norm{\nabla\mAcomp{s}{3}}_{\eps}^2 ds}&\leq {\norm{\mAcomp{0}{3}}_{\eps}^2}+4\left(\frac{\zeta_{\beta}\zeta_{H,0}}{N^{\beta-2}C_V^2}+\frac{\zeta_{\beta-2}\zeta_{H,0}}{N^{\beta-2}C_V^2}+O(N^{1-\beta})\right)\expt{\int_0^t \norm{\nabla \oA{s}}_{\eps}^2 ds}.  
\end{align*}
Plugging \eqref{estimate 1 a priori 1} in the relation above we get
\begin{align}\label{estimate 1 a priori 3}
&\expt{\norm{\mAcomp{t}{3}}_{\eps}^2}+2\eta\expt{\int_0^t\norm{\nabla\mAcomp{s}{3}}_{\eps}^2 ds}\notag\\ &\leq {\norm{\mAcomp{0}{3}}_{\eps}^2}+\left(\frac{C_{par}}{N^{\beta-2}}+O(N^{1-\beta})\right)\times\notag\\ & \quad \quad \left( \norm{\oA{0}}_{\eps}^2+\left(\frac{C_{par}}{N^{\beta-2}}+O(N^{1-\beta})\right)\expt{\int_0^t\lVert \nabla \mA{s}\rVert_{\eps}^2ds}+\left(\frac{C_{par}}{N^{\beta-4}}+O(N^{3-\beta})\right)\expt{\int_0^t\norm{\mA{s}}_{\eps}^2 ds}\right).   
\end{align}
In order to simplify the expression above, let us observe that since $\mA{t}$ is independent of the third component, Poincaré inequality with constant equal to $1$ holds. Therefore we have
\begin{align}\label{poincare 2Dtorus}
\expt{\int_0^t\norm{\mA{s}}_{\eps}^2 ds}\leq \expt{\int_0^t\norm{\nabla\mA{s}}_{\eps}^2 ds}.   
\end{align}
Secondly, thanks to \autoref{equivalence norms} it holds
\begin{align}\label{equivalence norm}
\norm{\nabla\mA{t}}_{\eps}^2&=\norm{\mB{t}}_{\eps}^2=\norm{\mBcomp{t}{3}}_{\eps}^2+\norm{\mBcomp{t}{H}}_{\eps}^2=\norm{\mBcomp{t}{3}}_{\eps}^2+\norm{\nabla\mAcomp{t}{3}}_{\eps}^2,\\
\label{poincare norm }
\norm{\mA{t}}_{\eps}^2&=\norm{(-\Delta)^{-1/2}\mB{t}}_{\eps}^2=\norm{(-\Delta)^{-1/2}\mBcomp{t}{3}}_{\eps}^2+\norm{(-\Delta)^{-1/2}\mBcomp{t}{H}}_{\eps}^2\leq \norm{\mBcomp{t}{3}}_{\eps}^2+\norm{\mAcomp{t}{3}}_{\eps}^2.
\end{align}
Combining \eqref{poincare 2Dtorus} and \eqref{equivalence norm} in relation \eqref{estimate 1 a priori 3} we get
\begin{align*}
\expt{\norm{\mAcomp{t}{3}}_{\eps}^2}+2\eta\expt{\int_0^t\norm{\nabla\mAcomp{s}{3}}_{\eps}^2 ds} & \leq {\norm{\mAcomp{0}{3}}_{\eps}^2}+\frac{C_{par}}{N^{2-\beta}}{\norm{\oA{0}}_{\eps}^2}\notag\\ &
+\left(\frac{C_{par}}{N^{\beta-2}}+O(N^{1-\beta})\right)\expt{\int_0^t\lVert \nabla \mAcomp{s}{3}\rVert_{\eps}^2ds}\\ & +\left(\frac{C_{par}}{N^{\beta-2}}+O(N^{1-\beta})\right)\expt{\int_0^t\lVert \nabla \mBcomp{s}{3}\rVert_{\eps}^2ds}.
\end{align*}
Therefore, choosing $N_0\in\N$ large enough such that for each $N\geq N_0$ it holds
\begin{align*}
    \frac{C_{par}}{N^{\beta-2}}+O(N^{1-\beta})\leq \frac{\eta}{2},
\end{align*}
we get
\begin{align}\label{estimate 4 proposition 1}
\expt{\norm{\mAcomp{t}{3}}_{\eps}^2}+\eta\expt{\int_0^t\norm{\nabla\mAcomp{s}{3}}_{\eps}^2 ds} & \leq {\norm{\mAcomp{0}{3}}_{\eps}^2}+\frac{C_{par}}{N^{2-\beta}}\norm{\oA{0}}_{\eps}^2\notag\\ & +\left(\frac{C_{par}}{N^{2-\beta}}+O(N^{1-\beta})\right)\expt{\int_0^t\lVert \nabla \mBcomp{s}{3}\rVert_{\eps}^2ds}.   
\end{align}
 Now we consider the expected value of \eqref{ito formula M3'}:
\begin{align}\label{aux estimate expected value}
&\expt{\norm{\oBcomp{t}{3}}^2_{\eps}}+2\eta\expt{\int_0^t\norm{\nabla \oBcomp{s}{3}}^2_{\eps}ds}+2\sumkoscj \expt{\int_0^t\norm{\overline{\skj\cdot \nabla \oBcomp{s}{3}-\oB{s}\cdot\nabla\skjcomp{3}}}_{\eps}^2 ds}\notag\\& \leq \norm{\oBcomp{0}{3}}^2_{\eps}+2\frac{\lvert \rho\rvert}{\xi_2}\left(\frac{\zeta_{H,2}}{ C_{1,H}^2}+\sqrt{2}\frac{\zeta_{H,\gamma/2}}{C_{1,H}C_{2,H}}+O(N^{-1})\right)\expt{\int_0^t \norm{\nabla \oBcomp{s}{3}}_{\eps}^2 ds} \notag\\ & +2\left(\frac{\zeta_{H,\gamma-2}}{N^{\gamma-4}C_{2,H}^2}+2\frac{\zeta_{\beta-2}\zeta_{H,0}}{N^{\beta-4}C_V^2}+\frac{\lvert \rho\rvert \xi_2}{N^{\gamma-4}}\left(\frac{\zeta_{H,\gamma-2}}{C_{2,H}^2}+\sqrt{2}\frac{\zeta_{H,\gamma/2}}{C_{1,H}C_{2,H}}\right)+O(N^{3-\gamma}+N^{3-\beta})\right)\expt{\int_0^t \norm{\nabla\oA{s}}^2_{\eps}ds}\notag\\ & +2\sumkoscj \expt{\int_0^t\norm{\skj\cdot\nabla\mBcomp{s}{3}}^2_{\eps}ds}+\left(4\frac{\zeta_{\beta-2}\zeta_{H,0}}{N^{\beta-4}C_{V}^2}+O(N^{3-\beta})\right)\expt{\int_0^t\norm{\mB{s}}^2_{\eps}ds}.
\end{align} 
Considering $\xi_2>0$ large enough, we can find $N_0\in \N$ such that for each $N\geq N_0$ it holds 
\begin{align*}
\frac{\lvert \rho\rvert}{\xi_2}\left(\frac{\zeta_{H,2}}{ C_{1,H}^2}+\sqrt{2}\frac{\zeta_{H,\gamma/2}}{C_{1,H}C_{2,H}}+O(N^{-1})\right)\leq \frac{\eta}{2}.
\end{align*}
Therefore \eqref{aux estimate expected value} reduces to
\begin{align*}
&\expt{\norm{\oBcomp{t}{3}}^2_{\eps}}+\eta\expt{\int_0^t\norm{\nabla \oBcomp{s}{3}}^2_{\eps}ds}+2\sumkoscj \expt{\int_0^t\norm{\overline{\skj\cdot \nabla \oBcomp{s}{3}-\oB{s}\cdot\nabla\skjcomp{3}}}_{\eps}^2 ds}\notag\\& \leq \norm{\oBcomp{0}{3}}^2_{\eps}+2\sumkoscj \expt{\int_0^t\norm{\skj\cdot\nabla\mBcomp{s}{3}}^2_{\eps}ds}+\left(\frac{C_{par}}{N^{\beta-4}}+O(N^{3-\beta})\right)\expt{\int_0^t\norm{\mB{s}}^2_{\eps}ds} \notag\\ & +\left(C_{par}\left(\frac{1}{N^{\gamma-4}}+\frac{1}{N^{\beta-4}}\right)+O(N^{3-\gamma}+N^{3-\beta})\right)\expt{\int_0^t \norm{\nabla\oA{s}}^2_{\eps}ds}\notag. 
\end{align*}
Combining \eqref{estimate 1 a priori 1} and the relation above we get
\begin{align}\label{estimate 2 a priori 1}
&\expt{\norm{\oBcomp{t}{3}}^2_{\eps}}+\eta\expt{\int_0^t\norm{\nabla \oBcomp{s}{3}}^2_{\eps}ds}+2\sumkoscj\expt{\int_0^t \norm{\overline{\skj\cdot \nabla \oBcomp{s}{3}-\oB{s}\cdot\nabla\skjcomp{3}}}_{\eps}^2 ds}\notag\\ & \leq {\norm{\oBcomp{0}{3}}^2_{\eps}}  +2\sumkoscj\expt{\int_0^t\norm{\skj\cdot\nabla\mBcomp{s}{3}}^2_{\eps}ds}+\left(\frac{C_{par}}{N^{\beta-4}}+O(N^{3-\beta})\right)\expt{\int_0^t\norm{\mB{s}}^2_{\eps}ds}\notag\\ & +\left(C_{par}\left(\frac{1}{N^{\gamma-4}}+\frac{1}{N^{\beta-4}}\right)+O(N^{3-\gamma}+N^{3-\beta})\right)\times\notag\\ & \quad\quad \left({\norm{\oA{0}}_{\eps}^2} +\left(\frac{C_{par}}{N^{\beta-2}}+O(N^{1-\beta})\right)\expt{\int_0^t\lVert \nabla \mA{s}\rVert_{\eps}^2ds}+\left(\frac{C_{par}}{N^{\beta-4}}+O(N^{3-\beta})\right)\expt{\int_0^t\norm{\mA{s}}_{\eps}^2 ds}\right).
\end{align}
Combining \eqref{estimate 2 a priori 1}, \eqref{poincare 2Dtorus}, \eqref{equivalence norm} and \eqref{poincare norm } we obtain 
\begin{align}\label{formula tremenda B3'}
&\expt{\norm{\oBcomp{t}{3}}^2_{\eps}}+\eta\expt{\int_0^t\norm{\nabla \oBcomp{s}{3}}^2_{\eps}ds}+2\sumkoscj\expt{\int_0^t\norm{\overline{\skj\cdot \nabla \oBcomp{s}{3}-\oB{s}\cdot\nabla\skjcomp{3}}}_{\eps}^2 ds}\notag\\ & \leq {\norm{\oBcomp{0}{3}}^2_{\eps}}+ C_{par}\left(\frac{1}{N^{\gamma-4}}+\frac{1}{N^{\beta-4}}\right){\norm{\oA{0}}_{\eps}^2} +2\sumkoscj\expt{\int_0^t\norm{\skj\cdot\nabla\mBcomp{s}{3}}^2_{\eps}ds}\notag\\ 
&+\left(C_{par}\left(\frac{1}{N^{\gamma-4}}+\frac{1}{N^{\beta-4}}\right)+O(N^{3-\beta}+N^{3-\gamma})\right)\times\notag\\&\quad \quad \quad\left(\expt{\int_0^t \norm{\mBcomp{s}{3}}_{\eps}^2 ds}+\expt{\int_0^t \norm{\nabla\mAcomp{s}{3}}_{\eps}^2 ds}+\expt{\int_0^t \norm{\mAcomp{s}{3}}_{\eps}^2 ds}\right).    
\end{align}
Therefore, thanks to \eqref{estimate 4 proposition 1} we get
\begin{align}\label{formula finale B3'}
&\expt{\norm{\oBcomp{t}{3}}^2_{\eps}}+2\eta\expt{\int_0^t\norm{\nabla \oBcomp{s}{3}}^2_{\eps}ds}+2\sumkoscj\expt{\int_0^t \norm{\overline{\skj\cdot \nabla \oBcomp{s}{3}-\oB{s}\cdot\nabla\skjcomp{3}}}_{\eps}^2 ds}\notag\\ & \leq {\norm{\oBcomp{0}{3}}^2_{\eps}}+C_{par}\left(\frac{1}{N^{\gamma-4}}+\frac{1}{N^{\beta-4}}\right) \left({\norm{\oA{0}}_{\eps}^2}+{\norm{\mAcomp{0}{3}}_{\eps}^2} \right)  +2\sumkoscj\expt{\int_0^t\norm{\skj\cdot\nabla\mBcomp{s}{3}}^2_{\eps}ds}\notag\\ 
&+\left(C_{par}\left(\frac{1}{N^{\gamma-4}}+\frac{1}{N^{\beta-4}}\right)+O(N^{3-\gamma}+N^{3-\beta})\right)\expt{\int_0^t \norm{\mBcomp{s}{3}}_{\eps}^2 ds}\notag\\ & +\left( \frac{C_{par}}{N^{2-\beta}}+O(N^{1-\beta}) \right)\expt{\int_0^t\lVert \nabla \mBcomp{s}{3}\rVert_{\eps}^2ds}.
\end{align}
Lastly we take the expected value of \eqref{ito formula barM3} obtaining 
\begin{align}\label{auxiliary expected value 2}
&\expt{\norm{\mBcomp{t}{3}}_{\eps}^2}+2\eta\expt{\int_0^t\norm{\nabla \mBcomp{s}{3}}_{\eps}^2 ds}+2\sumkoscj\expt{\int_0^t\norm{\skj\cdot\nabla\mBcomp{s}{3} }_{\eps}^2 ds}\notag\\ &\leq \norm{\mBcomp{0}{3}}_{\eps}^2+ 2\left(\frac{\zeta_{H,\gamma-2}}{N^{\gamma-4}C_{2,H}^2}+\lvert \rho\rvert\xi_3 \frac{\zeta_{H,\gamma-2}}{N^{\gamma-4}C_{2,H}^2}+\sqrt{2}\lvert \rho\rvert \xi_3\frac{\zeta_{H,\gamma/2}}{N^{\gamma-4}C_{1,H}C_{2,H}}+O(N^{3-\gamma})\right)\expt{\norm{\nabla \mAcomp{s}{3}}_{\eps}^2 ds}\notag\\ & +2\frac{\lvert \rho\rvert}{\xi_3} \left(\frac{\zeta_{H,2}}{C_{1,H}^2}+\sqrt{2}\frac{\zeta_{H,\gamma/2}}{C_{1,H}C_{2,H}}+O(N^{-1})\right)\expt{\int_0^t \norm{\nabla\mBcomp{s}{3}}_{\eps}^2 ds}\notag\\ & +2\sumkoscj\expt{\int_0^t\norm{\overline{\skj\cdot\nabla\oBcomp{s}{3}-\oB{s}\cdot\nabla\skjcomp{3}}}_{\eps}^2ds}.
\end{align}
Considering $\xi_3>0$ large enough we can find $N_0\in \N$ such that for each $N\geq N_0$ it holds 
\begin{align*}
\frac{\lvert \rho\rvert}{\xi_3} \left(\frac{\zeta_{H,2}}{C_{1,H}^2}+\sqrt{2}\frac{\zeta_{H,\gamma/2}}{C_{1,H}C_{2,H}}+O(N^{-1})\right) \leq \frac{\eta}{2}.
\end{align*}
Therefore \eqref{auxiliary expected value 2} reduces to
\begin{align*}
&\expt{\norm{\mBcomp{t}{3}}_{\eps}^2}+\eta\expt{\int_0^t\norm{\nabla \mBcomp{s}{3}}_{\eps}^2 ds}+2\sumkoscj\expt{\int_0^t\norm{\skj\cdot\nabla\mBcomp{s}{3} }_{\eps}^2 ds}\notag\\ & \leq \norm{\mBcomp{0}{3}}_{\eps}^2+\left(\frac{C_{par}}{N^{\gamma-4}}+O(N^{3-\gamma})\right)\expt{\norm{\nabla \mAcomp{s}{3}}_{\eps}^2 ds} +2\sumkoscj\expt{\int_0^t\norm{\overline{\skj\cdot\nabla\oBcomp{s}{3}-\oB{s}\cdot\nabla\skjcomp{3}}}_{\eps}^2ds}.
\end{align*}
Combining relations \eqref{estimate 4 proposition 1} and \eqref{formula finale B3'} in the formula above we obtain
\begin{align*}
   \expt{\norm{\mBcomp{t}{3}}_{\eps}^2}+\eta\expt{\int_0^t\norm{\nabla \mBcomp{s}{3}}_{\eps}^2ds }& \leq {\norm{\mBcomp{0}{3}}_{\eps}^2}+{\norm{\oBcomp{0}{3}}^2_{\eps}}+C_{par}\left(\frac{1}{N^{\gamma-4}}+\frac{1}{N^{\beta-4}}\right) \left({\norm{\oA{0}}_{\eps}^2}+{\norm{\mAcomp{0}{3}}_{\eps}^2} \right)\notag\\ 
&+\left(C_{par}\left(\frac{1}{N^{\gamma-4}}+\frac{1}{N^{\beta-4}}\right)+O(N^{3-\gamma}+N^{3-\beta})\right)\expt{\int_0^t \norm{\mBcomp{s}{3}}_{\eps}^2 ds}\notag\\ &+\left(\frac{C_{par}}{N^{\beta-2}}+O(N^{1-\beta})\right)\expt{\int_0^t\lVert \nabla \mBcomp{s}{3}\rVert_{\eps}^2ds}.
\end{align*}
Choosing $N_0\in \N$ large enough such that for each $N\geq N_0$ it holds
\begin{align*}
\frac{C_{par}}{N^{\beta-2}}+O(N^{1-\beta}) \leq \frac{\eta}{2}.  
\end{align*}
Analogously, the term which multiplies $\expt{\int_0^t\lVert \nabla \mBcomp{s}{3}\rVert_{\eps}^2ds}$ and the initial conditions can be bounded uniformly in $N$ depending only from $\eta, \beta,\gamma, C_V,C_{1,H}, C_{2,H},\lvert \rho\rvert, T$. Therefore previous expression simplifies and becomes, due to \autoref{properties of M N},
\begin{align}\label{final form BarB3}
\expt{\norm{\mBcomp{t}{3}}_{\eps}^2}+\eta\expt{\int_0^t\norm{\nabla \mBcomp{s}{3}}_{\eps}^2ds } & \lesssim_{\eta,\beta,\gamma,C_V,C_{1,H},C_{2,H},\lvert \rho\rvert,T} {\norm{\mBcomp{0}{3}}_{\eps}^2}+{\norm{\oBcomp{0}{3}}^2_{\eps}}\notag\\ & +\left(\frac{1}{N^{\beta-4}}+\frac{1}{N^{\gamma-4}}\right)\left({\norm{\mAcomp{0}{3}}_{\eps}^2}+{\norm{\oA{0}}_{\eps}^2}+\expt{\int_0^t \norm{\mBcomp{s}{3}}_{\eps}^2 ds}\right).     
\end{align}
Applying Grownall Lemma to \eqref{final form BarB3}
we obtain that for each $t\in [0,T]$ it holds
\begin{align}\label{estimate BarB3}
 \expt{\norm{\mBcomp{t}{3}}_{\eps}^2}+&\expt{\int_0^t\norm{\nabla \mBcomp{s}{3}}_{\eps}^2ds }\lesssim_{\eta,\beta,\gamma,C_V,C_{1,H},C_{2,H},\lvert \rho\rvert,T} {\norm{B^{3,\eps}_0}_{\eps}^2}+{\norm{\mAcomp{0}{3}}_{\eps}^2}+{\norm{\oA{0}}_{\eps}^2}.   
\end{align}
Exploiting the estimates implied by \eqref{estimate BarB3} in \eqref{estimate 4 proposition 1} and \eqref{formula finale B3'}, we get 
\begin{align}
\label{estimate barA3}
 \expt{\norm{\mAcomp{t}{3}}_{\eps}^2}+\expt{\int_0^t\norm{\nabla \mAcomp{s}{3}}_{\eps}^2ds }& \lesssim_{\eta,\beta,\gamma,C_V,C_{1,H},C_{2,H},\lvert \rho\rvert,T} {\norm{B^{3,\eps}_0}_{\eps}^2}+{\norm{\mAcomp{0}{3}}_{\eps}^2}+{\norm{\oA{0}}_{\eps}^2},\\
\label{estimate B3'}
 \expt{\norm{\oBcomp{t}{3}}_{\eps}^2}+\expt{\int_0^t\norm{\nabla \oBcomp{s}{3}}_{\eps}^2ds }& \lesssim_{\eta,\beta,\gamma,C_V,C_{1,H},C_{2,H},\lvert \rho\rvert,T} {\norm{B^{3,\eps}_0}_{\eps}^2}+{\norm{\mAcomp{0}{3}}_{\eps}^2}+{\norm{\oA{0}}_{\eps}^2}.    
\end{align}
Combining \eqref{equivalence norm}, \eqref{poincare norm }, \eqref{estimate BarB3} and \eqref{estimate barA3} we get 
\begin{align}
\label{estimate A'}
 \expt{\norm{\oA{t}}_{\eps}^2}+&\expt{\int_0^t\norm{\nabla \oA{s}}_{\eps}^2ds }\lesssim_{\eta,\beta,\gamma,C_V,C_{1,H},C_{2,H},\lvert \rho\rvert,T} {\norm{B^{3,\eps}_0}_{\eps}^2}+{\norm{\mAcomp{0}{3}}_{\eps}^2}+{\norm{\oA{0}}_{\eps}^2}.   
\end{align}
We omit the easy details in order to obtain \eqref{estimate barA3}, \eqref{estimate B3'} and \eqref{estimate A'}. Having already shown \eqref{estimate BarB3}, \eqref{estimate barA3}, \eqref{estimate B3'}, \eqref{estimate A'}, in order to conclude the proof we need only to put the supremum in time inside the expected value. Since this is a standard procedure we do it only $\oA{t}$, the other being analogous. Considering the expected value of the supremum in time of \eqref{ito formula A'} for $\xi_1=1$ we obtain, thanks to \eqref{estimate BarB3} and \eqref{estimate barA3} 
\begin{align}\label{supremum in time A' step 1}
\expt{\operatorname{sup}_{t\in [0,T]}\norm{\oA{t}}_{\eps}^2} & \leq {\norm{\oA{0}}_{\eps}^2}+2\left(\frac{\zeta_{H,0}}{C_{1,H}^2}+\lvert \rho\rvert\frac{\left(\zeta_{H,0}+\zeta_{H,2}\right)}{C_{1,H}^2}+O(N^{-1})\right)\expt{\int_0^T\norm{\nabla\oA{s}}^2_{\eps}ds} \notag\\ & +\left(6\frac{\zeta_{\beta}\zeta_{H,0} }{N^{\beta-2}C_V^2}+O(N^{1-\beta})\right)\expt{\int_0^T\lVert \nabla \mA{s}\rVert_{\eps}^2ds}\notag\\ & +\left(6\frac{\zeta_{\beta-2}\zeta_{H,0}}{N^{\beta-4}C_V^2}+O(N^{3-\beta})\right)\expt{\int_0^T\norm{\mA{s}}_{\eps}^2 ds} +\expt{\operatorname{sup}_{t\in [0,T]}\lvert W'_t\rvert}\notag\\ & \leq C_{par} \left({\norm{B_0^{3,\eps}}_{\eps}^2}+{\norm{\mAcomp{0}{3}}_{\eps}^2}+{\norm{\oA{0}}_{\eps}^2}\right)+\expt{\operatorname{sup}_{t\in [0,T]}\lvert W'_t\rvert}, 
\end{align}
$W'_t$ being defined in the first part of \autoref{ito formula oscillation}. Therefore we are left to analyze $\expt{\operatorname{sup}_{t\in [0,T]}\lvert W'_t\rvert}$ which can be treated by Burkholder-Davis-Gundy inequality, H\"older and Young inequalities obtaining
\begin{align}\label{supremum in time A' 2}
\expt{\operatorname{sup}_{t\in [0,T]}\lvert W'_t\rvert}& \lesssim_{\lvert \rho\rvert} \expt{\left(\sumkj \int_0^T \langle D\skj\oA{t},\oA{t} \rangle^2_{\eps}dt\right)^{1/2}}\notag\\ &  + \expt{\left(\sumkoscj \int_0^T  \langle \skj\cdot \nabla \mA{t}+D\skj\mA{t},\oA{t} \rangle^2_{\eps}dt\right)^{1/2}}\notag\\ & \leq \expt{\operatorname{sup}_{t\in [0,T]}\norm{\oA{t}}_\eps\left(\sumkj \int_0^T \norm{ D\skj\oA{t}}^2_{\eps}dt\right)^{1/2}}\notag\\ &  + \expt{\operatorname{sup}_{t\in [0,T]}\norm{\oA{t}}_\eps\left(\sumkoscj \int_0^T  \norm{ \skj\cdot \nabla \mA{t}+D\skj\mA{t} }^2_{\eps}dt\right)^{1/2}}\notag\\ & \leq \frac{1}{2}\expt{\operatorname{sup}_{t\in [0,T]}\norm{\oA{t}}_{\eps}^2}+ C_{par}\expt{\sumkj \int_0^T \norm{ D\skj\oA{t}}^2_{\eps}dt}\notag\\ &  + C_{par}\expt{\sumkoscj \int_0^T  \norm{ \skj\cdot \nabla \mA{t}+D\skj\mA{t}}^2_{\eps}dt}\notag\\ & \leq  C_{par} \left({\norm{B_0^{3,\eps}}_{\eps}^2}+{\norm{\mAcomp{0}{3}}_{\eps}^2}+{\norm{\oA{0}}_{\eps}^2}\right)+\frac{1}{2}\expt{\operatorname{sup}_{t\in [0,T]}\norm{\oA{t}}_{\eps}^2},
\end{align}
where in the last step we combine the computations in order to obtain \eqref{estimate I2}, \eqref{estimate I_3}, \eqref{estimate I_4} and the bounds \eqref{estimate BarB3}, \eqref{estimate barA3}, \eqref{estimate A'}. Combining relations \eqref{supremum in time A' step 1} and \eqref{supremum in time A' 2}, the a priori bound \eqref{apriori estimate A' CL LH} follows. 
\end{proof}
As a corollary of \eqref{apriori estimate barA3 CL LH}, \eqref{a priori estimate barB3 CL LH} we get 
\begin{corollary}\label{corollary compactness in space}
Assuming \autoref{HP noise} and there exists $N_0\in \N$ such that for each $N\geq N_0$ it holds  \begin{align}\label{HP passage to the limit}
{\norm{B_0^{3,\eps}}_{\eps}^2}+{\norm{\mAcomp{0}{3}}_{\eps}^2}+{\norm{\oA{0}}_{\eps}^2}\lesssim \eps,
\end{align} then there exists $N_1\in \N$ large enough such that for each $N\geq N_1$ we have   
\begin{align}\label{space compactness mean}
\expt{\sup_{t\in [0,T]}\left(\norm{\mBcomp{t}{3}}^2+\norm{\mAcomp{t}{3}}^2 \right)+\int_0^T\norm{\nabla\mBcomp{s}{3}}^2 ds +\int_0^T\norm{\nabla\mAcomp{s}{3}}^2 ds } \lesssim_{} 1,
\end{align}
where the hidden constants depend only on $\eta,\beta,\gamma,C_V,C_{1,H},C_{2,H},\lvert \rho\rvert,T$

\end{corollary}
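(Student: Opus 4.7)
The plan is to combine the a priori estimates of \autoref{proposition a priori estimate 1} with the elementary scaling between the $L^2(\T^3_\eps)$ and $L^2(\T^2)$ norms for $x_3$-independent functions, using the smallness assumption \eqref{HP passage to the limit} on the initial data to absorb the resulting factor of $\eps$.

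The first step is to exploit that, by construction, $\mBcomp{t}{3} = M_\eps B^{3,\eps}_t$ and $\mAcomp{t}{3}$ do not depend on $x_3$. Viewing them as functions on $\T^2$, one has
\begin{align*}
\norm{\mBcomp{t}{3}}_\eps^2 = 2\pi\eps\,\norm{\mBcomp{t}{3}}^2, \qquad \norm{\nabla \mBcomp{t}{3}}_\eps^2 = 2\pi\eps\,\norm{\nabla \mBcomp{t}{3}}^2,
\end{align*}
and the analogous identities for $\mAcomp{t}{3}$; the gradient identity uses that $\partial_3$ annihilates $x_3$-independent functions, so the full three-dimensional gradient on $\T^3_\eps$ reduces to the horizontal gradient on $\T^2$.

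The second step is to invoke \eqref{apriori estimate barA3 CL LH} and \eqref{a priori estimate barB3 CL LH} for all $N$ above the threshold $N_0^{\mathrm{prop}}$ provided by \autoref{proposition a priori estimate 1}; adding them controls the expectation of $\sup_{t\in[0,T]}(\norm{\mBcomp{t}{3}}_\eps^2 + \norm{\mAcomp{t}{3}}_\eps^2) + \int_0^T (\norm{\nabla \mBcomp{s}{3}}_\eps^2 + \norm{\nabla \mAcomp{s}{3}}_\eps^2)\,ds$ by a constant, depending only on the parameters listed in the proposition, times $\norm{B_0^{3,\eps}}_\eps^2 + \norm{\mAcomp{0}{3}}_\eps^2 + \norm{\oA{0}}_\eps^2$. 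By hypothesis \eqref{HP passage to the limit}, there exists an $N_0^{\mathrm{hyp}}$ such that this last quantity is itself bounded by a constant multiple of $\eps$ for $N \geq N_0^{\mathrm{hyp}}$. Setting $N_1 := \max\{N_0^{\mathrm{prop}}, N_0^{\mathrm{hyp}}\}$ and dividing through by $2\pi\eps$ via the scaling of the first step yields precisely \eqref{space compactness mean}.

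There is essentially no genuine obstacle: all the delicate work --- control of the stochastic stretching terms, handling of the interconnected system for $\mBcomp{\cdot}{3}$, $\mAcomp{\cdot}{3}$, $\oBcomp{\cdot}{3}$ and $\oA{\cdot}$, the careful absorption of the alpha-type contributions into dissipation through the choice of the auxiliary parameters $\xi_1, \xi_2, \xi_3$, and the Gr\"onwall closure --- is already carried out inside \autoref{proposition a priori estimate 1}. The corollary is a bookkeeping step that converts the $\eps$-weighted bounds on $\T^3_\eps$ into uniform-in-$\eps$ bounds on the limiting two-dimensional geometry; the only mild care required is to choose $N_1$ large enough so that both the proposition and the hypothesis apply simultaneously.
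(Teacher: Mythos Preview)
Your proposal is correct and follows exactly the paper's approach: the paper's own proof is a single sentence stating that the result follows immediately by combining \autoref{proposition a priori estimate 1} with assumption \eqref{HP passage to the limit}. You have simply made explicit the rescaling step $\norm{\cdot}_\eps^2 \sim \eps\,\norm{\cdot}^2$ for $x_3$-independent functions that the paper leaves implicit.
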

\begin{proof}
    The thesis follows immediately combining \autoref{proposition a priori estimate 1} and assumption \eqref{HP passage to the limit}.
\end{proof}

\section{Proof of \autoref{main Theorem}}\label{sec proof main thm}
Let $\ke:=\eta+\eta^{\eps}_T\geq \eta>0$ and let us introduce the following notation
\begin{align*}
Z^{1,\eps}_t&=\sumkmeanj\int_0^t e^{(t-s)\ke\Delta}\left(\skj\cdot\nabla\mBcomp{s}{3}-\mB{s}\cdot\nabla\skjcomp{3}\right)dW^{k,j}_s\\ &+\sumkoscj\int_0^t  e^{(t-s)\ke\Delta} \overline{\skj\cdot\nabla\oBcomp{s}{3}-\oB{s}\cdot\nabla\skjcomp{3}} dW^{k,j}_s,  \\
Z^{2,\eps}_t&=\sumkmeanj\int_0^t e^{(t-s)\ke\Delta}\skj\cdot\nabla\mAcomp{s}{3}dW^{k,j}_s\\ &+\sumkoscj\int_0^t  e^{(t-s)\ke\Delta}Q\left[\overline{\skj\cdot\nabla\oAcomp{s}{3}+\partial_3\skj\cdot \oA{t}}\right] dW^{k,j}_s.
\end{align*}
The stochastic integrals above are well defined thanks to the regularity properties of $\mB{t},\ \oB{t},\mA{t},\oA{t}$ and the summability properties of the coefficients in \autoref{HP noise}.
The first step in order to prove \autoref{main Theorem} is to rewrite $\mBcomp{t}{3}$ and $\mAcomp{t}{3}$ in mild form. Indeed, recalling the definition of $\mathcal{R}_{\eps,\gamma}$, see \eqref{definition matrix aepsgamma}, the following lemma holds true:
\begin{lemma}\label{mild form}
The following mild formulas hold true
\begin{align}
\label{mild A^3}  
\mAcomp{t}{3}&=e^{t\ke\Delta}\mAcomp{0}{3}+Z^{2,\eps}_t,\\ \label{mild B^3}
   \mBcomp{t}{3}&=e^{t\ke\Delta}\mBcomp{0}{3}+Z^{1,\eps}_t+\int_0^t e^{(t-s)\kappa_{\eps}\Delta}\operatorname{div}\left(\mathcal{R}_{\eps,\gamma}\nabla^{\perp}\mAcomp{s}{3}\right) ds.
\end{align}
\end{lemma}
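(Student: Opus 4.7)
The plan is to first derive the scalar SPDEs satisfied by $\mBcomp{t}{3}$ and $\mAcomp{t}{3}$ (both $x_3$-independent, hence naturally viewed as processes on $\T^2$), and then apply the stochastic variation of constants formula with the analytic semigroup $e^{t\ke\Delta}$ on $\Dot L^2(\T^2)$, whose well-definedness as a $C_0$-analytic semigroup is recalled in \autoref{function spaces periodic}.

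For the equation of $\mBcomp{t}{3}$, I would take the third component of every term in \eqref{Introductory equation Ito mean}. Because $\mB{t}=M_{\eps}B^{\eps}_t$ is $x_3$-independent, $\partial_3^2\mB{t}=0$, so $\Lambda^{\eps}\mB{t}=\eta^{\eps}_T\Delta\mB{t}$ and the symmetric diffusive drift of the third component collapses to $\ke\Delta\mBcomp{t}{3}$. For $\Lambda^{\eps}_{\rho}\mB{t}$, the explicit formula in \autoref{lemma ito strat corrector} combined with the algebraic structure of the vectors $a_{k,j}$ from \autoref{remark coefficients } shows that the only nonvanishing contributions to the third scalar drift come from the $(3,\cdot)$-block of $\partial_l\overline{Q^{\eps}_{\rho}}(0)$ contracted against $\partial_j\mB{t}_l$ with $l,j\in\{1,2\}$; all other blocks pair with $\partial_3$, which annihilates $x_3$-independent fields. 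Substituting the Biot--Savart identity $\mBcomp{t}{H}=-\nabla_H^{\perp}\mAcomp{t}{3}$ from \eqref{Biot savart independent from third component} and using the definition \eqref{definition matrix aepsgamma} of $\mathcal{R}_{\eps,\gamma}$ rewrites this drift as $\operatorname{div}(\mathcal{R}_{\eps,\gamma}\nabla^{\perp}\mAcomp{t}{3})$, matching the drift appearing in \eqref{mild B^3}. The martingale contributions are immediate: $\LL_{\skj}\mB{t}|_3=\skj\cdot\nabla\mBcomp{t}{3}-\mB{t}\cdot\nabla\skjcomp{3}$ for the $2$D modes, while for the oscillating ones the third component of $\overline{\LL_{\skj}\oB{t}}$ reads $\overline{\skj\cdot\nabla\oBcomp{t}{3}-\oB{t}\cdot\nabla\skjcomp{3}}$, which are exactly the integrands of $Z^{1,\eps}_t$.

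For the equation of $\mAcomp{t}{3}$, the natural route is via $\mAcomp{t}{3}=(-\Delta)^{-1}\nabla^{\perp}_H\cdot\mBcomp{t}{H}$, obtained by applying $(-\Delta)^{-1}\nabla^{\perp}_H\cdot$ to the horizontal part of \eqref{Introductory equation Ito mean}. A direct inspection based on \autoref{remark coefficients } shows that the $(1,\cdot)$- and $(2,\cdot)$-blocks of $\overline{Q^{\eps}_{\rho}}(0)$ vanish identically; consequently the horizontal part of $\Lambda^{\eps}_{\rho}\mB{t}$ is zero and only the symmetric drift $\ke\Delta\mBcomp{t}{H}$ survives, contributing $\ke\Delta\mAcomp{t}{3}$ after the Biot--Savart step. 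The stochastic terms are handled with \autoref{Inversion of Lie derivative}: for $k_3=0$, identity \eqref{property 3 inversion curl} rewrites the horizontal Lie derivative as $-\nabla^{\perp}_H(\skj\cdot\nabla\mAcomp{t}{3})$, from which $(-\Delta)^{-1}\nabla^{\perp}_H\cdot(-\nabla^{\perp}_H\,\cdot)$ simply returns $\skj\cdot\nabla\mAcomp{t}{3}$; for $k_3\neq 0$, identity \eqref{property 2 inversion curl} expresses $\LL_{\skj}\oB{t}$ as a curl of an expression built from $\skj$ and $\oA{t}$, and since $M_{\eps}$ commutes with $\operatorname{curl}$, $Q$ and $P_{\eps}$ (\autoref{properties of M N}) and kills every $\partial_3$ derivative by periodicity in $x_3$, the horizontal curl of the mean, once fed through $(-\Delta)^{-1}\nabla^{\perp}_H\cdot$, is exactly the integrand of $Z^{2,\eps}_t$.

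Once these two scalar SPDEs are established, the mild forms \eqref{mild A^3} and \eqref{mild B^3} follow from the classical stochastic variation of constants formula for linear parabolic SPDEs driven by an analytic semigroup (cf.\ \cite[Chapter 5]{Flandoli_Book_95}); the a priori bounds in \autoref{proposition a priori estimate 1} guarantee that all stochastic convolutions are well defined in the required $L^2_{\mathcal{F}}$ sense. The only genuinely delicate step is the identification of the alpha-term: matching the scalar drift $\Lambda^{\eps}_{\rho}\mB{t}|_3$ with $\operatorname{div}(\mathcal{R}_{\eps,\gamma}\nabla^{\perp}\mAcomp{t}{3})$ requires threading the antisymmetric algebraic structure of $\overline{Q^{\eps}_{\rho}}(0)$ from \autoref{convergence properties matrix} through the Biot--Savart identity for $\mBcomp{t}{H}$; the remainder is bookkeeping.
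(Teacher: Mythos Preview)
Your approach is correct and coincides with the paper's: both derive the scalar SPDEs for $\mBcomp{t}{3}$ and $\mAcomp{t}{3}$ and then pass to mild form, the paper doing so concretely by testing \eqref{weak formulation mean} against Fourier modes $\psi_h=e^{ih\cdot x}$ (with $\phi=(0,0,\psi_h)^t$ for $\mBcomp{t}{3}$ and $\phi=(-\nabla_H^\perp(-\Delta)^{-1}\psi_h,0)^t$ for $\mAcomp{t}{3}$) and applying It\^o's formula to $e^{t\ke|h|^2}\langle\,\cdot\,,\psi_h\rangle$, while you invoke the abstract variation of constants formula. Two minor points to fix: the vanishing of the horizontal part of $\Lambda^{\eps}_{\rho}\mB{t}$ is \emph{not} a consequence of the $(1,\cdot)$- and $(2,\cdot)$-blocks of $\overline{Q^{\eps}_{\rho}}(0)$ vanishing (the whole matrix $\overline{Q^{\eps}_{\rho}}(0)$ is zero), but of the structure of $\nabla\overline{Q^{\eps}_{\rho}}(0)$ combined with $\partial_3\mB{t}=0$---this is precisely \autoref{remark ineffective on 2D functions}, which you should cite; and well-definedness of $Z^{1,\eps}_t,Z^{2,\eps}_t$ follows already from the regularity provided by \autoref{thm well posed}, so invoking \autoref{proposition a priori estimate 1} (which is established afterwards) is unnecessary and mildly circular.
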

\begin{proof}
Let us take $\phi=(-\nabla^{\perp}_H(-\Delta)^{-1}\psi_h,0),\ \psi_h=e^{ih\cdot x}$ as test function in \eqref{weak formulation mean} obtaining, since all the functions appearing in the integrals are independent from $x_3$, that it holds
\begin{align}
     \langle \mBcomp{t}{H},-\nabla^{\perp}_H(-\Delta)^{-1}\psi_h\rangle
    &=  \langle \mBcomp{0}{H},-\nabla^{\perp}_H(-\Delta)^{-1}\psi_h\rangle+ \ke \int_0^t  \langle \mBcomp{s}{H},\nabla_{H}^{\perp}\psi_h\rangle \, ds\notag\\ &+\sumkmeanj\int_0^t\langle\skj\cdot \nabla \mBcomp{s}{H}-\mB{s}\cdot\nabla\skjcomp{H},-\nabla^{\perp}_H(-\Delta)^{-1}\psi_h\rangle   dW^{k,j}_s\notag\notag\\ & + \sumkoscj\int_0^t\langle\overline{\skj\cdot \nabla \oBcomp{s}{H}-\oB{s}\cdot\nabla\skjcomp{H}},-\nabla^{\perp}_H(-\Delta)^{-1}\psi_h\rangle   dW^{k,j}_s\label{mild form A3 step 1}.
\end{align}
Recalling that, by \eqref{Biot savart independent from third component} and \autoref{Inversion of Lie derivative}, it holds \begin{align*}
 \mB{t}&=\operatorname{curl}\mA{t}\\
 \sumkmeanj \skj\cdot \nabla \mB{t}-\mB{t}\cdot\nabla\skj&=\sumkmeanj\operatorname{curl}(\skj\cdot \nabla \mA{t}+D\skj \mA{t})\\
 \sumkoscj \skj\cdot \nabla \oB{t}-\oB{t}\cdot\nabla\skj&=\sumkoscj\operatorname{curl}(\skj\cdot \nabla \oA{t}+D\skj \oA{t}),
\end{align*}
looking at the horizontal component, we have in particular that
\begin{align}
 \label{formula 1 mild form A3}\mBcomp{t}{H}&=-\nabla_{H}^{\perp}\mAcomp{t}{3}\\
 \label{formula 2 mild form A3}\sumkmeanj \skj\cdot \nabla \mBcomp{t}{H}-\mB{t}\cdot\nabla\skjcomp{H}&=-\sumkmeanj \nabla_{H}^{\perp}(\skj\cdot \nabla \mAcomp{t}{3})\\
 \sumkoscj \overline{\skj\cdot \nabla \oBcomp{t}{H}-\oB{t}\cdot\nabla\skjcomp{H}}&=-\sumkoscj\nabla_H^{\perp}\left(\overline{\skj\cdot\nabla \oAcomp{t}{3}+\partial_3\skj\cdot\oA{t}} \right)\notag\\ & =-\sumkoscj\nabla_H^{\perp}Q\left(\overline{\skj\cdot\nabla \oAcomp{t}{3}+\partial_3\skj\cdot\oA{t}} \right). \label{formula 3 mild form A3}   
\end{align}
Now both $\skj\cdot \nabla \mAcomp{t}{3}$ an $Q\left(\overline{\skj\cdot\nabla \oAcomp{t}{3}+\partial_3\skj\cdot\oA{t}} \right)$ are zero mean. Substituting \eqref{formula 1 mild form A3}, \eqref{formula 2 mild form A3}, \eqref{formula 3 mild form A3} in \eqref{mild form A3 step 1} and integrating by parts we obtain
\begin{align*}
     \langle \mAcomp{t}{3},\psi_h\rangle
    &=  \langle \mAcomp{0}{3},\psi_h\rangle- \ke\lvert h\rvert^2 \int_0^t  \langle \mAcomp{s}{3},\psi_h\rangle \, ds\\ &+\sumkmeanj\int_0^t\langle\skj\cdot \nabla \mAcomp{s}{3},\psi_h\rangle   dW^{k,j}_s\notag\\ & + \sumkoscj\int_0^t\langle Q\left[\overline{\skj\cdot \nabla \oAcomp{s}{3}+\partial_3\skj\cdot\oA{s}}\right],\psi_h\rangle   dW^{k,j}_s.
\end{align*}
Therefore, applying It\^o formula to $e^{t\ke\lvert h\rvert^2}\langle \mAcomp{t}{3},\psi_h\rangle$ we have
\begin{align}\label{fourier mild form barA3}
\langle \mAcomp{t}{3},\psi_h\rangle&=e^{-t\ke\lvert h\rvert^2}   \langle \mAcomp{0}{3},\psi_h\rangle+\sumkmeanj\int_0^t e^{-(t-s)\ke\lvert h\rvert^2}\langle\skj\cdot \nabla \mAcomp{s}{3},\psi_h\rangle   dW^{k,j}_s\notag\\ & + \sumkoscj\int_0^t e^{-(t-s)\ke\lvert h\rvert^2}\langle Q\left[\overline{\skj\cdot \nabla \oAcomp{s}{3}+\partial_3\skj\cdot\oA{s}}\right],\psi_h\rangle   dW^{k,j}_s\quad \mathbb{P}-a.s.\quad \forall t\in [0,T].
\end{align}
We can find $\Gamma\subseteq \Omega$ of full probability such that the above equality holds for all $t\in [0,T]$ and all $h\in \Z^2_0$. But this is exactly \eqref{mild A^3} written in Fourier modes.\\
Let us consider for $h\in \Z^2_0$ $\phi=(0,0,\psi_h)^t,\ \psi_h=e^{ih\cdot x}$ as test function in \eqref{weak formulation mean} obtaining, since all the functions appearing in the integrals are independent from $x_3$, that it holds
\begin{align*}
     \langle \mBcomp{t}{3},\psi_h\rangle
    &=  \langle \mBcomp{0}{3},\psi_h\rangle- \ke \lvert h\rvert^2\int_0^t  \langle \mBcomp{s}{3},\psi_h\rangle \, ds+\int_0^t \langle \operatorname{div}\left(\mathcal{R}_{\eps,\gamma}\nabla^{\perp}\mAcomp{s}{3}\right),\psi_h\rangle ds \notag\\ &+\sumkmeanj\int_0^t\langle\skj\cdot \nabla \mBcomp{s}{3}-\mB{s}\cdot\nabla\skjcomp{3},\psi_h\rangle   dW^{k,j}_s\notag\\ & + \sumkoscj\int_0^t\langle\overline{\skj\cdot \nabla \oBcomp{s}{3}-\oB{s}\cdot\nabla\skjcomp{3}},\psi_h\rangle   dW^{k,j}_s.
\end{align*}
Therefore, applying It\^o formula to $e^{t\ke\lvert h\rvert^2}\langle \mBcomp{t}{3},\psi_h\rangle$ we obtain 
\begin{align}\label{fourier mild form barB3}
\langle \mBcomp{t}{3},\psi_h\rangle&=e^{-t\ke\lvert h\rvert^2}   \langle \mBcomp{0}{3},\psi_h\rangle+ \int_0^t e^{-(t-s)\kappa_{\eps}\lvert h\rvert^2}\langle \operatorname{div}\left(\mathcal{R}_{\eps,\gamma}\nabla^{\perp}\mAcomp{s}{3}\right),\psi_h\rangle ds \notag\\ & +\sumkmeanj\int_0^t e^{-(t-s)\ke\lvert h\rvert^2}\langle\skj\cdot \nabla \mBcomp{s}{3}-\mB{s}\cdot\nabla\skjcomp{3},\psi_h\rangle   dW^{k,j}_s\notag\\ & + \sumkoscj\int_0^t e^{-(t-s)\ke\lvert h\rvert^2}\langle\overline{\skj\cdot \nabla \oBcomp{s}{3}-\oB{s}\cdot\nabla\skjcomp{3}},\psi_h\rangle   dW^{k,j}_s\quad \mathbb{P}-a.s.\quad \forall t\in [0,T].
\end{align}
We can then find $\Gamma\subseteq \Omega$ of full probability such that the above equality holds for all $t\in [0,T]$ and all $h\in \Z^2_0$. But this is exactly \eqref{mild B^3} written in Fourier modes.
\end{proof}
Secondly we need to study some properties of the stochastic convolutions $Z^{1,\eps}_t,Z^{2,\eps}_t$. \autoref{Lemma 1 Stochastic Convolution} and \autoref{lemma 2 Stochastic Convolution} below are the analogous of \cite[Lemma 2.5]{flandoli2021quantitative} in our framework. In particular it is important to point out that \cite[Assumption 2.4]{flandoli2021quantitative} is false in our case and we have to deal also with the stochastic stretching, which was neglected in previous results. 
\begin{lemma}\label{Lemma 1 Stochastic Convolution}
For each $\delta>0$ and $N\in \N$ large enough such that \autoref{proposition a priori estimate 1} holds, then
\begin{align}\label{estimate 1 conv 1}
 \operatorname{sup}_{t\in [0,T]}\expt{\norm{Z^{1,\eps}_t}_{\Dot{H}^{-\delta}}^2}& \lesssim_{}  \frac{1}{\eps\delta}\left({\norm{B_0^{3,\eps}}_{\eps}^2}+{\norm{\mAcomp{0}{3}}_{\eps}^2}+{\norm{\oA{0}}_{\eps}^2}\right).   \\
\label{estimate 1 conv 2}
 \operatorname{sup}_{t\in [0,T]}\expt{\norm{Z^{2,\eps}_t}_{\Dot{H}^{-\delta}}^2}& \lesssim_{} \frac{1}{\eps\delta}\left({\norm{B_0^{3,\eps}}_{\eps}^2}+{\norm{\mAcomp{0}{3}}_{\eps}^2}+{\norm{\oA{0}}_{\eps}^2}\right).   
\end{align}
The hidden constants in the inequality above depend only on $\eta,\beta,\gamma,C_V,C_{1,H},C_{2,H},\lvert \rho\rvert,T$.
\end{lemma}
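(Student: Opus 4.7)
The plan is to apply It\^o's isometry to $\expt{\norm{Z^{i,\eps}_t}^2_{\Dot{H}^{-\delta}}}$, split each stochastic convolution into the mean ($k_3=0$) and oscillating ($k_3\neq 0$) contributions, and reduce to pointwise $L^2$-type bounds via the divergence structure of the integrands and the a priori estimates of \autoref{proposition a priori estimate 1}. The factor $1/\eps$ will originate from the identity $\norm{f}^2_\eps=2\pi\eps\norm{f}^2$ for $x_3$-independent functions, and from Jensen's inequality $\norm{\overline g}^2\leq\tfrac{1}{2\pi\eps}\norm{g}^2_\eps$ on the oscillating part. The factor $1/\delta$ will arise from the time integral $\int_0^t(t-s)^{-(1-\delta)}\,ds=t^\delta/\delta$ after using the smoothing estimate $\norm{e^{s\Delta}\operatorname{div}_H g}_{\Dot{H}^{-\delta}}\lesssim s^{-(1-\delta)/2}\norm{g}$ from \autoref{Properties semigroup} with $\alpha=-1,\ \varphi=1-\delta$ (valid for $\delta\in(0,1]$; for $\delta>1$ one reduces to $\delta=1$ via the embedding $\Dot{H}^{-\delta}\supset\Dot{H}^{-1}$).

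For the mean contribution I would first rewrite each integrand as a horizontal divergence: using $\operatorname{div}\skj=0$ and the $x_3$-independence of everything when $k_3=0$, one has $\skj\cdot\nabla\mBcomp{s}{3}-\mB{s}\cdot\nabla\skjcomp{3}=\operatorname{div}_H(\skjcomp{H}\mBcomp{s}{3}-\mBcomp{s}{H}\skjcomp{3})$ and $\skj\cdot\nabla\mAcomp{s}{3}=\operatorname{div}_H(\skjcomp{H}\mAcomp{s}{3})$, the latter vanishing for $j=2$ since $a_{k,2}=e_3$. It\^o's isometry (handling the $\rho$-correlation cross terms between $j=1,2$ with $k_3=0$ by Cauchy--Schwarz) together with the smoothing estimate reduces the mean contribution to the form $C(t^\delta/\delta)\,\sumkmeanj|\tkj|^2\sup_s\expt{\norm{\mBcomp{s}{3}}^2+\norm{\mAcomp{s}{3}}^2+\norm{\mBcomp{s}{H}}^2}$. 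Since $\sumkmeanj|\tkj|^2$ is bounded uniformly in $\eps$ under \autoref{HP noise}, and since the term involving $\norm{\mBcomp{s}{H}}^2$ carries only the small prefactor $\sumkmeanjdue|\tkj|^2=O(N^{2-\gamma})$ (which comfortably absorbs even a rough bound through \autoref{proposition a priori estimate 1} via the identity $\norm{\mBcomp{s}{H}}=\norm{\nabla\mAcomp{s}{3}}$), the $\eps$-conversion $\norm{\cdot}^2_\eps=2\pi\eps\norm{\cdot}^2$ combined with \autoref{proposition a priori estimate 1} delivers the main $\frac{1}{\eps\delta}$ factor times the initial data.

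For the oscillating contribution I would forgo the smoothing and use only the contraction $\norm{e^{s\Delta}f}_{\Dot{H}^{-\delta}}\leq\norm{f}$ valid for $\delta>0$; It\^o's isometry then yields the bound $\frac{1}{2\pi\eps}\sumkoscj\expt{\int_0^T\norm{f^{k,j}_s}^2_\eps\,ds}$. For $Z^{1,\eps}$ one has $\norm{f^{k,j}_s}^2_\eps\lesssim|\tkj|^2\norm{\nabla\oBcomp{s}{3}}^2_\eps+|k|^2|\tkj|^2\norm{\oB{s}}^2_\eps$. For $Z^{2,\eps}$ the stretching term $\partial_3\skj\cdot\oA{s}$ presents the main obstacle, since it lacks an obvious divergence structure usable for smoothing; I would absorb the resulting factor $|k_3|$ via the thin-direction Poincar\'e inequality $\norm{\oA{s}}_\eps\leq\eps\norm{\partial_3\oA{s}}_\eps$. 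Under \autoref{HP noise} the coefficient sums $\sumkoscj|\tkj|^2=O(N^{2-\beta})$, $\sumkoscj|k|^2|\tkj|^2=O(N^{4-\beta})$, and $\sumkoscj\eps^2|k_3|^2|\tkj|^2=O(N^{2-\beta})$ all decay fast enough ($\beta,\gamma\ge 4$) that, combined with the time-integrated a priori bounds on $\norm{\oB{\cdot}}^2_\eps$ and $\norm{\nabla\oA{\cdot}}^2_\eps$ from \autoref{proposition a priori estimate 1}, the oscillating contribution is at most $O(N^{5-\beta})\cdot(\text{initial data})=O(1/\eps)\cdot(\text{initial data})$ for $\beta=4$, sub-leading with respect to the $\frac{1}{\eps\delta}$ mean contribution. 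The hardest step is precisely this Poincar\'e-based treatment of $\partial_3\skj\cdot\oA{s}$, which essentially requires the full decay hypothesis $\beta\ge 4$.
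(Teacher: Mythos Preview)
Your approach is essentially the paper's: It\^o isometry (the paper writes Burkholder--Davis--Gundy, same thing at second moment), divergence form plus the smoothing $\norm{e^{s\ke\Delta}\operatorname{div}_H g}_{\dot H^{-\delta}}\lesssim s^{-(1-\delta)/2}\norm{g}$ for the transport pieces to produce the $1/\delta$, direct $L^2$ bounds for the stretching pieces, and the conversion $\norm{\cdot}^2_\eps=2\pi\eps\norm{\cdot}^2$ (respectively $\norm{\overline g}^2\le\tfrac{1}{2\pi\eps}\norm{g}^2_\eps$) for the $1/\eps$.

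One small correction in your mean contribution: by writing the \emph{entire} integrand $\skj\cdot\nabla\mBcomp{s}{3}-\mB{s}\cdot\nabla\skjcomp{3}$ as $\operatorname{div}_H(\skjcomp{H}\mBcomp{s}{3}-\mBcomp{s}{H}\skjcomp{3})$ and then applying the smoothing estimate uniformly, you are left with $\int_0^t(t-s)^{-(1-\delta)}\norm{\mBcomp{s}{H}}^2\,ds$ for the $j=2$ piece. Since $\norm{\mBcomp{s}{H}}^2=\norm{\nabla\mAcomp{s}{3}}^2$ and \autoref{proposition a priori estimate 1} only controls the \emph{time integral} of this quantity, not its supremum, the singular weight cannot be handled this way; the small prefactor $O(N^{2-\gamma})$ does not help here. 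The paper instead splits the mean integrand from the start, keeps the smoothing only on the transport piece $\skj\cdot\nabla\mBcomp{s}{3}$, and bounds the stretching piece directly via $\norm{e^{(t-s)\ke\Delta}(\mB{s}\cdot\nabla\skjcomp{3})}_{\dot H^{-\delta}}\le\norm{\mB{s}\cdot\nabla\skjcomp{3}}_\eps/\sqrt{2\pi\eps}$, picking up $\sumkmeanjdue|k|^2|\tkj|^2=O(N^{4-\gamma})$ against the available $\int_0^T\expt{\norm{\nabla\mAcomp{s}{3}}^2_\eps}\,ds$. With this adjustment your argument goes through; your treatment of the oscillating contributions (no smoothing, Poincar\'e for $\partial_3\skj\cdot\oA{s}$) is a valid variant of the paper's, which instead keeps the smoothing weight on the oscillating transport term as well.
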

\begin{proof}
By Burkholder-Davis-Gundy inequality and \autoref{Properties semigroup} we have for each $t\in [0,T]$
\begin{align}\label{estimate 1 conv 1 step 1}
 \expt{\norm{Z^{1,\eps}_t}_{\Dot{H}^{-\delta}}^2}& \lesssim_{\lvert \rho\rvert} \sumkmeanj\expt{\int_0^t \norm{e^{(t-s)\ke\Delta}\left(\skj\cdot\nabla\mBcomp{s}{3}-\mB{s}\cdot\nabla\skjcomp{3}\right)}_{\Dot{H}^{-\delta}}^2 ds}\notag\\ &+\sumkoscj\expt{\int_0^t  \norm{e^{(t-s)\ke\Delta} \overline{\skj\cdot\nabla\oBcomp{s}{3}-\oB{s}\cdot\nabla\skjcomp{3}}}_{\Dot{H}^{-\delta}}^2 ds} \notag\\ &  \lesssim_{\eta}
  \sumkmeanj\expt{\int_0^t \frac{1}{(t-s)^{1-\delta}}\norm{\skj\cdot\nabla\mBcomp{s}{3}}_{\Dot{H}^{-1}}^2+\norm{\mB{s}\cdot\nabla\skjcomp{3}}^2 ds}\notag\\ &+\sumkoscj\expt{\int_0^t  \frac{1}{(t-s)^{1-\delta}} \norm{\overline{\skj\cdot\nabla\oBcomp{s}{3}}}_{\Dot{H}^{-1}}^2+\norm{\overline{\oB{s}\cdot\nabla\skjcomp{3}}}^2ds}\notag\\ & =\frac{1}{\eps}\sumkmeanj\expt{\int_0^t \frac{1}{(t-s)^{1-\delta}}\norm{\skj\cdot\nabla\mBcomp{s}{3}}_{\Dot{H}^{-1}(\T^{3}_{\eps})}^2+\norm{\mB{s}\cdot\nabla\skjcomp{3}}_{\eps}^2 ds}\notag\\ &+\frac{1}{\eps}\sumkoscj\expt{\int_0^t  \frac{1}{(t-s)^{1-\delta}}\norm{\overline{\skj\cdot\nabla\oBcomp{s}{3}}}_{\Dot{H}^{-1}(\T^{3}_{\eps})}^2 +\norm{\overline{\oB{s}\cdot\nabla\skjcomp{3}}}_{\eps}^2ds}\notag\\ &  = V^{1,\eps}_t+V^{2,\eps}_t+V^{3,\eps}_t+V^{4,\eps}_t.
\end{align}
Now let us estimate $V^{1,\eps}_t,\ V^{2,\eps}_t,\ V^{3,\eps}_t,\ V^{4,\eps}_t $. First we have by H\"older inequality since the $\skj$ are divergence free 

\begin{align}\label{estimate 1 conv 1 step 2}
V^{1,\eps}_t+V^{3,\eps}_t & \leq \frac{1}{\eps}\expt{\sumkmeanjuno {\int_0^t \frac{1}{(t-s)^{1-\delta}} \norm{\skj\mBcomp{s}{3}}^2_\eps ds}+\sumkoscj{\int_0^t \frac{1}{(t-s)^{1-\delta} } \norm{\skj\oBcomp{s}{3}}^2_\eps ds}}\notag\\ & \lesssim_{\beta,C_{1,H},C_V} \frac{1}{\eps}\left({\int_0^t \frac{1}{(t-s)^{1-\delta}} \expt{\norm{\mBcomp{s}{3}}^2_\eps} ds}+\frac{1}{N^{\beta-2}}{\int_0^t \frac{1}{(t-s)^{1-\delta} }} \expt{\norm{\oBcomp{s}{3}}^2_\eps} ds\right)\notag\\ & \lesssim_{\eta,\beta,\gamma,C_V,C_{1,H},C_{2,H},\lvert \rho\rvert,T} \frac{1}{\eps \delta}\left({\norm{B_0^{3,\eps}}_{\eps}^2}+{\norm{\mAcomp{0}{3}}_{\eps}^2}+{\norm{\oA{0}}_{\eps}^2}\right), 
\end{align}
where in the last step we exploit \eqref{a priori estimate barB3 CL LH} and \eqref{apriori estimate B3' CL LH}. For what concerns the other two terms, noting that if $k_3=0$ then $\skjcomp{3}\neq 0$ if and only if $j=2$, thanks to Holder inequality and \eqref{apriori estimate barA3 CL LH}, \eqref{apriori estimate A' CL LH} we get
\begin{align}\label{estimate 1 conv 1 step 3}
    V^{2,\eps}_t+V^{3,\eps}_t &\lesssim_{\beta, \gamma,C_{2,H},C_V} \frac{1}{\eps} \int_0^t \frac{1}{N^{\gamma-4}}\expt{\norm{\mBcomp{s}{H}}_{\eps}^2}+\frac{1}{N^{\beta-4}} \expt{\norm{\oB{s}}^2_{\eps}}ds\notag \\ & = \frac{1}{\eps} \int_0^t \frac{1}{N^{\gamma-4}}\expt{\norm{\nabla\mAcomp{s}{3}}_{\eps}^2}+\frac{1}{N^{\beta-4}} \expt{\norm{\nabla\oA{s}}^2_{\eps}}ds\notag\\ & \lesssim_{\eta,\beta,\gamma,C_V,C_{1,H},C_{2,H},\lvert \rho\rvert,T} \frac{1}{\eps}\left( {\norm{B_0^{3,\eps}}_{\eps}^2}+{\norm{\mAcomp{0}{3}}_{\eps}^2}+{\norm{\oA{0}}_{\eps}^2}\right).
\end{align}
Combining \eqref{estimate 1 conv 1 step 1}, \eqref{estimate 1 conv 1 step 2}, \eqref{estimate 1 conv 1 step 3} relation \eqref{estimate 1 conv 1} follows. The proof of \eqref{estimate 1 conv 2} is similar and we omit the details.
\end{proof}
\begin{lemma}\label{lemma 2 Stochastic Convolution}
For each $\delta>0$ and $N\in \N$ large enough such that \autoref{proposition a priori estimate 1} holds, then
\begin{align}\label{estimate 2 conv 1}
 \operatorname{sup}_{t\in [0,T]}\expt{\norm{Z^{1,\eps}_t}_{\Dot{H}^{-1-2\delta}}^2}\lesssim_{}   &\frac{1}{\eps\delta^2N^2}\left({\norm{B_0^{3,\eps}}_{\eps}^2}+{\norm{\mAcomp{0}{3}}_{\eps}^2}+{\norm{\oA{0}}_{\eps}^2}\right), \\ 
\label{estimate 2 conv 2}
 \operatorname{sup}_{t\in [0,T]}\expt{\norm{Z^{2,\eps}_t}_{\Dot{H}^{-1-2\delta}}^2}\lesssim_{}   &\frac{1}{\eps\delta^2N^2}\left({\norm{B_0^{3,\eps}}_{\eps}^2}+{\norm{\mAcomp{0}{3}}_{\eps}^2}+{\norm{\oA{0}}_{\eps}^2}\right),   
\end{align}
where the hidden constants depend only on $\eta,\beta,\gamma,C_V,C_{1,H},C_{2,H},\lvert \rho\rvert,T$.
\end{lemma}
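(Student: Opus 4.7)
The plan is to mirror the scheme of \autoref{Lemma 1 Stochastic Convolution}, but to target the lower regularity space $\Dot{H}^{-1-2\delta}$ in order to harvest an extra factor $N^{-2}$ from the high-frequency support of the coefficients $\skj$. As in that lemma, I would begin with Burkholder--Davis--Gundy combined with It\^o isometry, reducing to bounds of the form
\[
\expt{\int_0^t \norm{e^{(t-s)\ke\Delta} F_{k,j}(s)}_{\Dot{H}^{-1-2\delta}}^2 ds},
\]
where $F_{k,j}$ denotes the integrand in the Brownian integral indexed by $(k,j)$, summed over the appropriate sector of noise indices.

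The heart of the argument is to apply \autoref{Properties semigroup}(i) with $\alpha=-2$ and $\varphi=1-2\delta$, which yields $\norm{e^{t\Delta}g}_{\Dot{H}^{-1-2\delta}}^2\lesssim t^{-(1-2\delta)}\norm{g}_{\Dot{H}^{-2}}^2$, and then to estimate $F_{k,j}$ in $\Dot{H}^{-2}$ rather than merely in $\Dot{H}^{-1}$. To this end, I would use that the full transport--stretching combination $\skj\cdot\nabla B-B\cdot\nabla\skj$ with $B$ divergence-free can be rewritten as $-\nabla\times(\skj\times B)$ via the standard vector identity; taking the third component exhibits $F_{k,j}$ as a first-order spatial derivative of $(\skj\times B)_H$, so that its $\Dot{H}^{-2}$ norm is controlled by the $\Dot{H}^{-1}$ norm of $\skj\times B$. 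Since $\skj=\tkj a_{k,j}e^{ik\cdot x}$ acts as a pure Fourier shift by $k$, and \autoref{HP noise} localizes $k$ at frequency $|k|\geq N$, a straightforward high/low-frequency splitting of $B$ should give
\[
\norm{\skj\times B}_{\Dot{H}^{-1}}^2\lesssim \frac{|\tkj|^2}{N^2}\bigl(\norm{B}^2+\norm{\nabla B}^2\bigr),
\]
delivering the desired $N^{-2}$ gain at the cost of an additional $\norm{\nabla B}^2$ term, which is absorbed by the a priori estimates.

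Assembling these pieces, I would sum over $k,j$ using the summability of $|\tkj|^2$ from \autoref{HP noise}, integrate in time, and apply \eqref{apriori estimate barA3 CL LH}--\eqref{apriori estimate B3' CL LH} to bound the resulting $\expt{\int_0^T \norm{B}^2+\norm{\nabla B}^2\,ds}$. The $\delta^{-2}$ factor in the final bound is the accumulation of one $\delta^{-1}$ from $\int_0^t(t-s)^{-(1-2\delta)}ds\sim\delta^{-1}$ and a second from a Cauchy--Schwarz step used to trade the time-singularity against the $\int_0^T\expt{\norm{\nabla B}^2}ds$ control. The case of $Z^{2,\eps}_t$ is essentially identical; the only genuinely new ingredient is the extra contribution $\partial_3\skj\cdot\oA{s}$ in the oscillating-mode integrand, which is handled by combining the pointwise bound $|\partial_3\skj|\lesssim|k_3||\tkj|$ with the thin-layer Poincar\'e inequality \eqref{poincare thin layer} to rewrite it as a gradient of $\oA{s}$ already controlled by \eqref{apriori estimate A' CL LH}. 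The main obstacle, I expect, is arranging this curl/divergence structure \emph{uniformly} across all noise sectors, particularly in the 2D modes $(k_3=0,\ j=2)$ where $\skjcomp{3}$ is not small: here the vector identity packages the potentially problematic stretching term together with the transport term into a single curl, so that the $N^{-2}$ shift argument applies to every term at once.
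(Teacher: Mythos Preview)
Your curl-and-shift scheme has a genuine gap precisely in the sector you flag as delicate. For the 2D modes with $j=2$ one has $\skj=\tkj\, e_3\, e^{ik\cdot x}$, so the horizontal components of $\skj\times\mB{s}$ are $(-\skjcomp{3}\mBcomp{s}{2},\skjcomp{3}\mBcomp{s}{1})$; your high/low-frequency shift bound $\norm{e^{ik\cdot x}f}_{\Dot H^{-1}}^2\lesssim N^{-2}\norm{f}_{H^1}^2$ therefore demands control on $\norm{\nabla\mBcomp{s}{H}}^2=\norm{\mAcomp{s}{3}}_{\Dot H^2}^2$, which the a priori estimates of \autoref{proposition a priori estimate 1} do not furnish --- and, as \autoref{sec strategy proof} stresses, cannot furnish with the present tools. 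The same obstruction recurs for the oscillating modes, where your bound would need $\norm{\nabla\oBcomp{s}{H}}^2$. So packaging everything as a single curl does \emph{not} let the $N^{-2}$ shift apply uniformly. There is also a secondary issue: even in the $j=1$ sector, where $(\skj\times\mB{s})_H$ involves only $\mBcomp{s}{3}$, your approach still asks for $\norm{\nabla\mBcomp{s}{3}}^2$, and \eqref{a priori estimate barB3 CL LH} controls this only in $L^1_t$; the vague ``Cauchy--Schwarz step'' meant to pair it with the singular kernel $(t-s)^{-(1-2\delta)}$ does not close for small $\delta$.

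The paper extracts the $N^{-2}$ from \emph{different} mechanisms in different sectors, never invoking gradient control on $B$. For the dominant $(k_3=0,\,j=1)$ modes it pulls out $\lvert\tkj\rvert^2\leq N^{-2}$, extends the $k$-sum to all of $\Z^2_0$, and uses Plancherel: exchanging the $k$- and $j$-sums in $\sum_k\norm{e^{ik\cdot x}\mBcomp{s}{3}}_{\Dot H^{-1-\delta}}^2$ collapses to $\norm{\mBcomp{s}{3}}^2\sum_{j\in\Z^2_0}\lvert j\rvert^{-2-2\delta}\lesssim\delta^{-1}\norm{\mBcomp{s}{3}}^2$, so that only the $\sup_t$ bound on $\expt{\norm{\mBcomp{s}{3}}^2}$ is used and the second $\delta^{-1}$ comes from this lattice sum rather than from any time-integral trick. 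For the $(k_3=0,\,j=2)$ and $k_3\neq0$ sectors no shift is applied at all: the stretching term is written as a divergence, estimated directly in $\Dot H^{-1}$, and the stronger coefficient decay $\sum_k\lvert\tkj\rvert^2\lesssim N^{2-\gamma}$ (resp.\ $N^{2-\beta}$), already $\leq N^{-2}$ under \autoref{HP noise}, together with the $L^2$-level identities $\norm{\mBcomp{s}{H}}=\norm{\nabla\mAcomp{s}{3}}$ and $\norm{\oB{s}}=\norm{\nabla\oA{s}}$, closes the bound.
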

\begin{proof}
By Burkholder-Davis-Gundy inequality and \autoref{Properties semigroup} we have for each $t\in [0,T]$
\begin{align}\label{estimate 2 conv 1 step 1}
 \expt{\norm{Z^{1,\eps}_t}_{\Dot{H}^{-1-2\delta}}^2}& \lesssim_{\lvert \rho\rvert} \sumkmeanj\expt{\int_0^t \norm{e^{(t-s)\ke\Delta}\left(\skj\cdot\nabla\mBcomp{s}{3}-\mB{s}\cdot\nabla\skjcomp{3}\right)}_{\Dot{H}^{-1-2\delta}}^2 ds}\notag\\ &+\sumkoscj\expt{\int_0^t  \norm{e^{(t-s)\ke\Delta} \overline{\skj\cdot\nabla\oBcomp{s}{3}-\oB{s}\cdot\nabla\skjcomp{3}}}_{\Dot{H}^{-1-2\delta}}^2 ds} \notag\\ &  \lesssim_{\eta}
  \sumkmeanj\expt{\int_0^t \frac{1}{(t-s)^{1-\delta}}\norm{\skj\cdot\nabla\mBcomp{s}{3}}_{\Dot{H}^{-2-\delta}}^2+\norm{\mB{s}\cdot\nabla\skjcomp{3}}_{\Dot{H}^{-1-2\delta}}^2 ds}\notag\\ &+\sumkoscj\expt{\int_0^t  {\norm{\overline{\skj\cdot\nabla\oBcomp{s}{3}}}_{\Dot{H}^{-1}}^2}+\norm{\overline{\oB{s}\cdot\nabla\skjcomp{3}}}_{\Dot{H}^{-1-2\delta}}^2ds}\notag\\ & =\sumkmeanj\expt{\int_0^t \frac{1}{(t-s)^{1-\delta}}\norm{\skj\cdot\nabla\mBcomp{s}{3}}_{\Dot{H}^{-2-\delta}}^2+\norm{\mB{s}\cdot\nabla\skjcomp{3}}_{\Dot{H}^{-1-2\delta}}^2 ds}\notag\\ &+\sumkoscj\expt{\int_0^t  \frac{1}{\eps}\norm{\overline{\skj\cdot\nabla\oBcomp{s}{3}}}_{\Dot{H}^{-1}(\T^{3}_{\eps})}^2 +\norm{\overline{\oB{s}\cdot\nabla\skjcomp{3}}}_{\Dot{H}^{-1-2\delta}}^2ds}\notag\\ &  = Y^{1,\eps}_t+Y^{2,\eps}_t+Y^{3,\eps}_t+Y^{4,\eps}_t.
\end{align}    
The term $Y^{3,\eps}_t$ can be simply treated as $V^{3,\eps}_t$ in the proof of \autoref{Lemma 1 Stochastic Convolution} obtaining 
\begin{align}\label{estimate 2 conv 1 step 2}
    Y^{3,\eps}_t \lesssim_{\eta,\beta,\gamma,C_V,C_{1,H},C_{2,H},\lvert \rho\rvert,T} \frac{1}{\eps N^{\beta-2}}\left({\norm{B_0^{3,\eps}}_{\eps}^2}+{\norm{\mAcomp{0}{3}}_{\eps}^2}+{\norm{\oA{0}}_{\eps}^2}\right).
\end{align}
Let us consider now the others which are the more difficult ones. We will use in the following, even if not specified later, that all the vector fields considered are divergence free, moreover  for $k_3=0$ we $\skjcomp{H}\neq 0$ if and only if $j=1$ and $\skjcomp{3}\neq 0$ if and only if $j=2$. Let us start considering $Y^{1,\eps}_t$ obtaining, thanks to the definition of Sobolev norms in the two dimensional torus 
\begin{align}\label{estimate 2 conv 1 step 3}
 Y^{1,\eps}_t& \lesssim \sumkmeanjuno\expt{\int_0^t \frac{1}{(t-s)^{1-\delta}} \norm{\skjcomp{H} \mBcomp{s}{3}}_{\Dot{H}^{-1-\delta}}^2 ds}\notag\\ & \lesssim   \frac{1}{N^2}\sum_{k\in \Z^2_0}\expt{\int_0^t \frac{1}{(t-s)^{1-\delta}} \norm{ e^{ik\cdot x} \mBcomp{s}{3}}_{\Dot{H}^{-1-\delta}}^2 ds}\notag\\ & =  \frac{1}{2\pi N^2}{\int_0^t \frac{1}{(t-s)^{1-\delta}}\sum_{k\in \Z^2_0}\sum_{j\in \Z^2_0}\frac{1}{\lvert j\rvert^{2+2\delta}} \expt{\langle \mBcomp{s}{3}e^{ik\cdot x},e^{-ij\cdot x} \rangle^2} ds}\notag\\ & =\frac{1}{2\pi N^2}{\int_0^t \frac{1}{(t-s)^{1-\delta}}\sum_{k\in \Z^2_0} \expt{\langle \mBcomp{s}{3},e^{-ik\cdot x} \rangle^2}\sum_{j\in \Z^2_0}\frac{1}{\lvert j\rvert^{2+2\delta}}ds}\notag\\ & \lesssim \frac{1}{\delta N^2} \int_0^t \frac{1}{(t-s)^{1-\delta}}\expt{\norm {\mBcomp{s}{3}}^2} ds \notag\\ & \lesssim_{\eta,\beta,\gamma,C_V,C_{1,H},C_{2,H},\lvert \rho\rvert,T} \frac{1}{\eps\delta^2 N^2} \left({\norm{B_0^{3,\eps}}_{\eps}^2}+{\norm{\mAcomp{0}{3}}_{\eps}^2}+{\norm{\oA{0}}_{\eps}^2}\right),
\end{align}
where in the last step we exploited \eqref{a priori estimate barB3 CL LH}. Let us consider $Y^{2,\eps}_t$ obtaining, thanks to \eqref{equivalence norm barv3}
\begin{align}\label{estimate 2 conv 1 step 4}
Y^{2,\eps}_t & \lesssim \sumkmeanjdue \int_0^t \expt{\norm{\skjcomp{3}\mBcomp{s}{H}}^2} ds \notag \\ & \lesssim_{C_{2,H}} \sum_{\substack{k\in\Z^2_0\\ N\leq \lvert k\rvert \leq 2N}} \frac{1}{\lvert k\rvert^{\gamma}} \int_0^T \expt{\norm{\mBcomp{s}{H}}^2} ds\notag \\ & \lesssim_{\gamma} \frac{1}{N^{\gamma-2}} \int_0^T \expt{\norm{\nabla\mAcomp{s}{3}}^2} ds\notag\\ &  \lesssim_{\eta,\beta,\gamma,C_V,C_{1,H},C_{2,H},\lvert \rho\rvert,T} \frac{1}{\eps N^{\gamma-2}} \left({\norm{B_0^{3,\eps}}_{\eps}^2}+{\norm{\mAcomp{0}{3}}_{\eps}^2}+{\norm{\oA{0}}_{\eps}^2}\right),
\end{align}
where in the last step we exploited \eqref{apriori estimate barA3 CL LH}. Lastly we treat $Y^{4,\eps}_t$, obtaining thanks to \eqref{equivalence norm A'} and the fact that $\operatorname{div}$ and $M^{\eps}$ commute
\begin{align}\label{estimate 2 conv 1 step 5}
    Y^{4,\eps}_t& \lesssim\sumkoscj\expt{\int_0^t  \norm{\overline{\oB{s}\skjcomp{3}}}^2ds} \notag\\
     &= \frac{1}{\eps}\sumkoscj\expt{\int_0^t  \norm{\overline{\oB{s}\skjcomp{3}}}_{\eps}^2ds} \notag\\ & \leq \frac{1}{\eps}\sumkoscj\expt{\int_0^t  \norm{{\oB{s}\skjcomp{3}}}_{\eps}^2ds} \notag
     \\ & \lesssim_{\beta, C_V} \frac{1}{\eps N^{\beta-2}} \expt{\int_0^t  \norm{\oB{s}}_{\eps}^2ds }\notag\\ & \lesssim_{\eta,\beta,\gamma,C_V,C_{1,H},C_{2,H},\lvert \rho\rvert,T} \frac{1}{\eps N^{\beta-2}} \left({\norm{B_0^{3,\eps}}_{\eps}^2}+{\norm{\mAcomp{0}{3}}_{\eps}^2}+{\norm{\oA{0}}_{\eps}^2}\right).
\end{align}
Relation \eqref{estimate 2 conv 1} follows combining \eqref{estimate 2 conv 1 step 1}, \eqref{estimate 2 conv 1 step 2}, \eqref{estimate 2 conv 1 step 3}, \eqref{estimate 2 conv 1 step 4}, \eqref{estimate 2 conv 1 step 5}. The proof of \eqref{estimate 2 conv 2} is similar and we omit the details.
\end{proof}
Interpolating between \autoref{Lemma 1 Stochastic Convolution} and \autoref{lemma 2 Stochastic Convolution} we obtain:
\begin{corollary}\label{corollary stochastic convolution}
For each $\theta\in (0,1]$ and $\delta\in (0,\theta]$ it holds
\begin{align*}
\operatorname{sup}_{t\in [0,T]}\expt{\norm{Z^{1,\eps}_t}_{\Dot{H}^{-\theta}}^2} \lesssim_{}   &\frac{1}{\eps\delta^{\frac{1+\theta}{1+\delta}}N^{\frac{2(\theta-\delta)}{1+\delta}}}\left({\norm{B_0^{3,\eps}}_{\eps}^2} +{\norm{\mAcomp{0}{3}}_{\eps}^2}+{\norm{\oA{0}}_{\eps}^2}\right).   \\
 \operatorname{sup}_{t\in [0,T]}\expt{\norm{Z^{2,\eps}_t}_{\Dot{H}^{-\theta}}^2}\lesssim_{}   &\frac{1}{\eps\delta^{\frac{1+\theta}{1+\delta}}N^{\frac{2(\theta-\delta)}{1+\delta}}}\left({\norm{B_0^{3,\eps}}_{\eps}^2}+{\norm{\mAcomp{0}{3}}_{\eps}^2}+{\norm{\oA{0}}_{\eps}^2}\right).    
\end{align*}
The hidden constants in the inequality above depend only on  $\eta,\beta,\gamma,C_V,C_{1,H},C_{2,H},\lvert \rho\rvert,T$.
\end{corollary}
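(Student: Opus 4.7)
The proof is a direct interpolation argument combining \autoref{Lemma 1 Stochastic Convolution} and \autoref{lemma 2 Stochastic Convolution}, with no new probabilistic input required. I write it out for $Z^{1,\eps}_t$; the argument for $Z^{2,\eps}_t$ is identical.

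The plan is to use the standard interpolation inequality for the Bessel scale, namely
\[
\norm{f}_{\Dot{H}^{(1-\lambda)s_0 + \lambda s_1}} \leq \norm{f}_{\Dot{H}^{s_0}}^{1-\lambda}\, \norm{f}_{\Dot{H}^{s_1}}^{\lambda}, \qquad \lambda \in [0,1],
\]
with the endpoints $s_0 = -\delta$ and $s_1 = -1-2\delta$. Solving $(1-\lambda)(-\delta) + \lambda(-1-2\delta) = -\theta$ gives
\[
\lambda = \frac{\theta-\delta}{1+\delta} \in [0,1),
\]
which is admissible because the hypotheses $\delta\in(0,\theta]$ and $\theta\in(0,1]$ give $\theta-\delta \in [0,\theta) \subset [0,1)$.

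Squaring the interpolation inequality and taking expectation, then applying H\"older's inequality with conjugate exponents $\frac{1}{1-\lambda}$ and $\frac{1}{\lambda}$, yields
\[
\expt{\norm{Z^{1,\eps}_t}_{\Dot{H}^{-\theta}}^2} \leq \left(\expt{\norm{Z^{1,\eps}_t}_{\Dot{H}^{-\delta}}^2}\right)^{1-\lambda} \left(\expt{\norm{Z^{1,\eps}_t}_{\Dot{H}^{-1-2\delta}}^2}\right)^{\lambda}.
\]
Inserting \eqref{estimate 1 conv 1} into the first factor and \eqref{estimate 2 conv 1} into the second, and using that both right-hand sides share the common factor $\bigl(\norm{B_0^{3,\eps}}_{\eps}^2 + \norm{\mAcomp{0}{3}}_{\eps}^2 + \norm{\oA{0}}_{\eps}^2\bigr)$, gives an overall bound proportional to
\[
\frac{1}{\eps^{(1-\lambda)+\lambda}}\cdot \frac{1}{\delta^{(1-\lambda)+2\lambda}}\cdot \frac{1}{N^{2\lambda}} \cdot \bigl(\norm{B_0^{3,\eps}}_{\eps}^2 + \norm{\mAcomp{0}{3}}_{\eps}^2 + \norm{\oA{0}}_{\eps}^2\bigr).
\]
A direct computation gives $(1-\lambda)+\lambda = 1$, $\;(1-\lambda) + 2\lambda = 1+\lambda = \frac{1+\theta}{1+\delta}$, and $2\lambda = \frac{2(\theta-\delta)}{1+\delta}$, which is exactly the exponent structure in the statement. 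Taking the supremum over $t\in[0,T]$ (which commutes with the above pointwise-in-$t$ interpolation) produces the claimed bound; the hidden constants depend only on the parameters inherited from \autoref{Lemma 1 Stochastic Convolution} and \autoref{lemma 2 Stochastic Convolution}, namely $\eta,\beta,\gamma,C_V,C_{1,H},C_{2,H},\lvert \rho\rvert,T$.

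There is essentially no obstacle: the nontrivial work was already done in the two preceding lemmas (in particular, the $\eps^{-1}$ loss cannot be improved because it is inherited from bounding the vertical-independent pieces $\skj\cdot\nabla\mBcomp{s}{3}$ via $\norm{\cdot}_{\Dot{H}^{-1}(\T^3_\eps)} = \eps^{1/2}\norm{\cdot}_{\Dot{H}^{-1}}$). The only point demanding a bit of care is the verification that $\lambda = (\theta-\delta)/(1+\delta)$ lies in $[0,1)$, which is needed to legitimize the use of both the interpolation inequality and H\"older's inequality, and this follows immediately from the range of $\theta$ and $\delta$ assumed in the statement.
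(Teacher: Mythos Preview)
Your proof is correct and follows exactly the approach indicated in the paper, which simply states that the corollary is obtained by interpolating between \autoref{Lemma 1 Stochastic Convolution} and \autoref{lemma 2 Stochastic Convolution}; you have filled in the routine details of that interpolation precisely.
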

Moreover, we recall that by classical theory of evolution equations, see for example \cite{Lunardi}, or arguing as in \autoref{mild form}, the unique weak solutions of \eqref{limit solution B3}, \eqref{limit solution A3} can be written in mild form as
\begin{align*}
\overline{A_t^{3,\gamma}}=e^{t(\eta+\eta_T)\Delta}\overline{A^3_0}, \quad \overline{B_t^{3,\gamma}}=e^{t(\eta+\eta_T)\Delta}\overline{B^3_0}+\int_0^t e^{(t-s)(\eta+\eta_T)\Delta}\operatorname{div}\left(\mathcal{R}_{\gamma}\nabla^{\perp}\overline{A^3_s}\right) ds.
\end{align*}
Now we introduce some intermediate functions between $(\mBcomp{t}{3},\mAcomp{t}{3})$ and $(\overline{B_t^3},\overline{A_t^3})$. Let $\left(\widehat{B_t^{3,\eps}},\widehat{A_t^{3,\eps}}\right)$ the unique weak solution of the linear system
\begin{align}\label{intermediate systems A}
&\begin{cases}
\partial_t \widehat{A^{3,\eps}_t}&=(\eta+\eta^{\eps}_T)\Delta \widehat{A^{3,\eps}_t}\quad x\in \T^2,\ t\in (0,T)\\
\widehat{A^{3,\eps}_t}|_{t=0}&=\mAcomp{0}{3},   
\end{cases}\\
\label{intermediate systems B}
&\begin{cases}
\partial_t \widehat{B^{3,\eps}_t}&=(\eta+\eta^{\eps}_T)\Delta \widehat{B^{3,\gamma}_t}+\operatorname{div}\left(\mathcal{R}_{\eps,\gamma}\nabla^{\perp}\widehat{A^{3,\eps}_t}\right)\quad x\in \T^2,\ t\in (0,T)\\
\widehat{B^{3,\eps}_t}|_{t=0}&=\mBcomp{0}{3}.   
\end{cases}    
\end{align}
Again, by classical theory of evolution equations, see for example \cite{Lunardi}, or arguing as in \autoref{mild form}, the unique weak solutions of \eqref{intermediate systems A}, \eqref{intermediate systems B} can be written in mild form as
\begin{align*}
\widehat{A_t^{3,\eps}}=e^{t(\eta+\eta^{\eps}_T)\Delta}\mAcomp{0}{3}, \quad \widehat{B_t^{3,\eps}}=e^{t(\eta+\eta^{\eps}_T)\Delta}\mBcomp{0}{3}+\int_0^t e^{(t-s)(\eta+\eta_T^{\eps})\Delta} \operatorname{div}\left(\mathcal{R}_{\eps,\gamma}\nabla^{\perp}\widehat{A^{3,\eps}_s}\right) ds.
\end{align*}
Now we prove that $(\widehat{B^{3,\eps}_t},\widehat{A^{3,\eps}_t})$
and $(\overline{B^{3,\gamma}_t},\overline{A^{3,\gamma}_t})$ are close.
\begin{lemma}\label{preliminary convergence}
Under the same assumptions of \autoref{main Theorem}, for each  $\theta_1\in (0,1],\ \theta_2\in (0,\theta_1)$ we have
\begin{align}
\operatorname{sup}_{t\in [0,T]}\norm{\widehat{A^{3,\eps}_t}-\overline{A^{3,\gamma}_t}}_{\Dot{H}^{-\theta_2}}&\lesssim_{\eta,T}\lvert \eta^{\eps}_T-\eta_T\rvert^{\theta_2/2}+\norm{\overline{v^3_0}-\mAcomp{0}{3}}_{\Dot{H}^{-\theta_2}}, \label{intermediate estimate A}   \\
\operatorname{sup}_{t\in [0,T]}\norm{\widehat{B^{3,\eps}_t}-\overline{B^{3,\gamma}_t}}_{\Dot{H}^{-\theta_1}}&\lesssim_{\eta,T} \begin{cases}
    \frac{\lvert\eta^{\eps}_T-\eta_T\rvert^{\theta_2/2}+\norm{\overline{A^3_0}-\mAcomp{0}{3}}_{\Dot{H}^{-\theta_2}}}{\theta_1-\theta_2}+\norm{\overline{B^3_0}-\mBcomp{0}{3}}_{\Dot{H}^{-\theta_1}}+\frac{1}{\theta_1 N}\\  \text{if }\gamma=4,\\
    \frac{\lvert\eta^{\eps}_T-\eta_T\rvert^{\theta_2/2}+\norm{\overline{A^3_0}-\mAcomp{0}{3}}_{\Dot{H}^{-\theta_2}}}{\theta_1-\theta_2}+\norm{\overline{B^3_0}-\mBcomp{0}{3}}_{\Dot{H}^{-\theta_1}}+\frac{1}{\theta_1 N^{\gamma/2-2}} \\ \text{if }\gamma>4.
\end{cases}\label{intermediate estimate B}    
\end{align}
\end{lemma}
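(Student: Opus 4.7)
The entire argument is deterministic: each of the four objects lives on $\T^2$ and is written in mild form via the heat semigroup (as recalled just above the statement), so the problem reduces to quantitative comparisons of heat semigroups with close diffusion constants and of drifts with close matrix coefficients. The three tools I will use are \autoref{Properties semigroup} (smoothing and approximation for the heat semigroup), \autoref{convergence properties matrix} (giving $\norm{\mathcal{R}_{\eps,\gamma}}_{HS}=O(1)$ uniformly and $\norm{\mathcal{R}_{\eps,\gamma}-\mathcal{R}_\gamma}_{HS}\lesssim N^{-1}$ or $\lesssim N^{2-\gamma/2}$ according to whether $\gamma=4$ or $\gamma>4$), together with standard add-and-subtract decompositions.

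\textbf{Bound on $A$.} I write
\[
\widehat{A_t^{3,\eps}} - \overline{A_t^{3,\gamma}} = e^{t(\eta+\eta^\eps_T)\Delta}\bigl(\mAcomp{0}{3} - \overline{A^3_0}\bigr) + \bigl(e^{t(\eta+\eta^\eps_T)\Delta} - e^{t(\eta+\eta_T)\Delta}\bigr)\overline{A^3_0}.
\]
The first term is immediately bounded in $\Dot{H}^{-\theta_2}$ by the contractivity of the heat semigroup. For the second I factorise $e^{t(\eta+\eta^\eps_T)\Delta} - e^{t(\eta+\eta_T)\Delta} = e^{t(\eta+\min(\eta^\eps_T,\eta_T))\Delta}\bigl(e^{t|\eta^\eps_T-\eta_T|\Delta}-I\bigr)$ and apply \autoref{Properties semigroup}\,(ii) with $\varphi=\theta_2$, obtaining a factor $(T|\eta^\eps_T-\eta_T|)^{\theta_2/2}\norm{\overline{A^3_0}}_{L^2}$. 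Summing gives \eqref{intermediate estimate A}.

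\textbf{Bound on $B$.} Subtracting the mild formulas I obtain a linear part of the same form as for $A$, now estimated in $\Dot{H}^{-\theta_1}$ (since $\theta_1\geq\theta_2$ and $|\eta^\eps_T-\eta_T|\leq 1$ for $N$ large, the error $|\eta^\eps_T-\eta_T|^{\theta_1/2}$ is absorbed into $|\eta^\eps_T-\eta_T|^{\theta_2/2}$), plus a drift part which I split by adding and subtracting into three integrals:
\begin{align*}
(\mathrm{i})\ &\int_0^t e^{(t-s)\ke\Delta}\operatorname{div}\bigl(\mathcal{R}_{\eps,\gamma}\nabla^\perp(\widehat{A^{3,\eps}_s}-\overline{A^{3,\gamma}_s})\bigr)ds,\\
(\mathrm{ii})\ &\int_0^t e^{(t-s)\ke\Delta}\operatorname{div}\bigl((\mathcal{R}_{\eps,\gamma}-\mathcal{R}_\gamma)\nabla^\perp\overline{A^{3,\gamma}_s}\bigr)ds,\\
(\mathrm{iii})\ &\int_0^t \bigl(e^{(t-s)\ke\Delta}-e^{(t-s)(\eta+\eta_T)\Delta}\bigr)\operatorname{div}\bigl(\mathcal{R}_\gamma\nabla^\perp\overline{A^{3,\gamma}_s}\bigr)ds.
\end{align*}
For (i) I apply \autoref{Properties semigroup}\,(i) with $\varphi=2-\theta_1+\theta_2$ together with the elementary inequality $\norm{\operatorname{div}(\mathcal{R}\nabla^\perp g)}_{\Dot{H}^{-2-\theta_2}}\lesssim \norm{\mathcal{R}}_{HS}\norm{g}_{\Dot{H}^{-\theta_2}}$; the time integral $\int_0^t(t-s)^{-1+(\theta_1-\theta_2)/2}ds$ yields the factor $1/(\theta_1-\theta_2)$ and the $A$-estimate just proved supplies the $\Dot{H}^{-\theta_2}$ control of $\widehat A^{3,\eps}_s-\overline A^{3,\gamma}_s$. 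For (ii) I use instead $\varphi=2-\theta_1$, so that $\overline{A^{3,\gamma}_s}$ appears in $L^2$; the resulting $\int_0^t(t-s)^{-1+\theta_1/2}ds$ produces the $1/\theta_1$ factor and \autoref{convergence properties matrix} provides the $N^{-1}$ or $N^{2-\gamma/2}$ decay. Integral (iii) is nontrivial only when $\gamma=4$ (otherwise $\mathcal{R}_\gamma=0$) and is treated by combining the semigroup factorisation used for $A$ with the div estimate, contributing an extra multiple of $|\eta^\eps_T-\eta_T|^{\theta_2/2}/(\theta_1-\theta_2)$, which is absorbed in the first term of \eqref{intermediate estimate B}.

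\textbf{Main obstacle.} The delicate point is selecting the smoothing exponent $\varphi$ in \autoref{Properties semigroup}\,(i) independently in integrals (i) and (ii). Routing through $\Dot{H}^{-\theta_2}$ is mandatory in (i) because $\widehat A^{3,\eps}-\overline A^{3,\gamma}$ is not small in $L^2$ as $\eps\to 0$; routing through $L^2$ is mandatory in (ii) in order to attach the matrix error directly to the $\eps$-uniform quantity $\norm{\overline A^3_0}_{L^2}$. This dichotomy is exactly what produces the two distinct denominators $\theta_1-\theta_2$ and $\theta_1$ appearing in \eqref{intermediate estimate B}; everything else is a direct application of \autoref{Properties semigroup} and \autoref{convergence properties matrix}.
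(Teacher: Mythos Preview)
Your proof is correct and follows essentially the same strategy as the paper: mild formulation, semigroup factorisation for the $A$-difference, and an add-and-subtract telescoping for the drift part of $B$. The only cosmetic difference is the order of the telescoping: the paper routes the intermediate terms through $\widehat{A^{3,\eps}}$ (so that the matrix-difference piece and the semigroup-difference piece sit against $\widehat{A^{3,\eps}}$, while the $A$-difference piece carries $\mathcal{R}_\gamma$), whereas you route through $\overline{A^{3,\gamma}}$; both choices lead to the same three types of contributions and the same bounds. Your identification of the two distinct smoothing exponents $\varphi=2-\theta_1+\theta_2$ for (i) and $\varphi=2-\theta_1$ for (ii), producing the denominators $\theta_1-\theta_2$ and $\theta_1$, matches the paper's treatment of its $I_4$ and $I_3$ exactly.
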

\begin{proof}
 First let us observe that by assumptions
\begin{align*}
\mBcomp{0}{3}\rightharpoonup \overline{B^3_0} \text{ in }\Dot L^2(\T^2),\quad \mAcomp{0}{3}\rightharpoonup \overline{A^3_0} \text{ in }\Dot L^2(\T^2)   
\end{align*}
 we have in particular that the families $\left\{\norm{\mBcomp{0}{3}}^2\right\}_{\eps\in (0,1)},\ \left\{\norm{\mAcomp{0}{3}}^2\right\}_{\eps\in (0,1)}$ are bounded. Therefore
\begin{align}\label{assumptions initial conditions preliminary lemma}
    \operatorname{sup}_{\eps\in (0,1)}\frac{{\norm{\mBcomp{0}{3}}_{\eps}^2}}{\eps}=\operatorname{sup}_{\eps\in (0,1)}{\norm{\mBcomp{0}{3}}}^2<+\infty,\quad \mBcomp{0}{3}\rightarrow \overline{B^3_0} \text{ in }\Dot{H}^{-\theta_2}(\T^2)\quad \forall \theta_2>0,\notag\\
    \operatorname{sup}_{\eps\in (0,1)}\frac{{\norm{\mAcomp{0}{3}}_{\eps}^2}}{\eps}=\operatorname{sup}_{\eps\in (0,1)}{\norm{\mAcomp{0}{3}}}^2<+\infty,\quad \mAcomp{0}{3}\rightarrow \overline{A^3_0} \text{ in }\Dot{H}^{-\theta_2}(\T^2)\quad \forall \theta_2>0.
\end{align}    
Moreover, since by \autoref{convergence properties matrix} $\mathcal{R}_{\eps,\gamma}\rightarrow \mathcal{R}_{\eps},$ then in particular we have
\begin{align}\label{uniform bound matrix}
    \operatorname{sup}_{\eps\in (0,1)}\norm{\mathcal{R}_{\eps,\gamma}}_{HS}<+\infty.
\end{align}
Lastly, by simple energy estimates on \eqref{intermediate systems A} we have for each $\eps\in (0,1)$
\begin{align}\label{uniform bound auxiliary A}
    \operatorname{sup}_{t\in [0,T]}\norm{\widehat{A^{3,\eps}_t}}^2+\int_0^T \norm{\nabla\widehat{A^{3,\eps}_s}}^2 ds\lesssim_{\eta} \operatorname{sup}_{\eps\in (0,1)}{\norm{\mAcomp{0}{3}}}^2<+\infty.
\end{align}
The convergence of $\widehat{A^{\eps,3}_t}$ to $\overline{A^{3,\gamma}_t}$ then follows by triangle inequality, \autoref{Properties semigroup} and \eqref{assumptions initial conditions preliminary lemma}. Indeed for each $t\in [0,T]$ it holds
\begin{align*}
    \norm{\widehat{A^{3,\eps}_t}-\overline{A^{3,\gamma}_t}}_{\Dot{H}^{-\theta_2}}&\leq  \norm{\left(e^{t(\eta+\eta_T^{\eps})\Delta}-e^{t(\eta+\eta_T)\Delta}\right)\overline{A^3_0}}_{\Dot{H}^{-\theta_2}}+\norm{e^{t(\eta+\eta^{\eps}_T)\Delta}\left(\mAcomp{0}{3}-\overline{A^3_0}\right)}_{\Dot{H}^{-\theta_2}}\\ & \lesssim_{\eta,T}\lvert \eta^{\eps}_T-\eta_T\rvert^{\theta_2/2}\norm{\overline{A^3_0}}+\norm{\overline{A^3_0}-\mAcomp{0}{3}}_{\Dot{H}^{-\theta_2}}\\ & \lesssim \lvert \eta^{\eps}_T-\eta_T\rvert^{\theta_2/2}+\norm{\overline{A^3_0}-\mAcomp{0}{3}}_{\Dot{H}^{-\theta_2}}.
\end{align*}
Thanks to \eqref{intermediate estimate A} we can obtain also \eqref{intermediate estimate B}. Indeed thanks to triangle inequality for each $t\in [0,T]$ we have
\begin{align*}
\norm{\widehat{B^{3,\eps}_t}-\overline{B^{3,\gamma}_t}}_{\Dot{H}^{-\theta_1}}& \leq \left(\norm{\left(e^{t(\eta+\eta_T^{\eps})\Delta}-e^{t(\eta+\eta_T)\Delta}\right)\overline{B^3_0}}_{\Dot{H}^{-\theta_1}}+\norm{e^{t(\eta+\eta^{\eps}_T)\Delta}\left(\mBcomp{0}{3}-\overline{B^3_0}\right)}_{\Dot{H}^{-\theta_1}}\right)\\& +\norm{\int_0^t \left(e^{(t-s)(\eta+\eta_T^{\eps})\Delta}-e^{(t-s)(\eta+\eta_T)\Delta}\right) \operatorname{div}\left(\mathcal{R}_{\eps,\gamma}\nabla^{\perp}\widehat{A^{3,\eps}_s}\right) ds}_{\Dot{H}^{-\theta_1}}  \\ & + \norm{\int_0^t e^{(t-s)(\eta+\eta_T)\Delta} \operatorname{div}\left(\left(\mathcal{R}_{\eps,\gamma}-\mathcal{R}_{\gamma}\right)\nabla^{\perp}\widehat{A^{3,\eps}_s}\right) ds}_{\Dot{H}^{-\theta_1}}\\ & + \norm{\int_0^t e^{(t-s)(\eta+\eta_T)\Delta} \operatorname{div}\left(\mathcal{R}_{\gamma}\left(\nabla^{\perp}\widehat{A^{3,\eps}_s}-\nabla^{\perp}\overline{A^{3,\gamma}_s}\right)\right) ds}_{\Dot{H}^{-\theta_1}}=I_1+I_2+I_3+I_4. 
\end{align*}
$I_1$ can be treated as above obtaining:
\begin{align}\label{intermediate B step 1}
I_1\lesssim_{\eta,T}\lvert \eta^{\eps}_T-\eta_T\rvert^{\theta_1/2}+\norm{\overline{B^3_0}-\mBcomp{0}{3}}_{\Dot{H}^{-\theta_1}}.    
\end{align}
Thanks to \autoref{Properties semigroup}, \eqref{uniform bound matrix} and \eqref{uniform bound auxiliary A} it holds
\begin{align}\label{intermediate B step 2}
I_2 & \lesssim \lvert \eta^{\eps}_T-\eta_T\rvert^{\theta_1/2}\int_0^t\lvert t-s\rvert^{\theta_1/2}\norm{e^{(t-s)\eta\Delta}\operatorname{div}\left(\mathcal{R}_{\eps,\gamma}\nabla^{\perp}\widehat{A^{3,\eps}_s}\right)} ds\notag\\ & \lesssim_{\eta}  \lvert\eta^{\eps}_T-\eta_T\rvert^{\theta_1/2}\int_0^t \frac{1}{(t-s)^{1-\theta_1/2}}\norm{\widehat{A^{3,\eps}_s}} ds\notag\\ & \lesssim_{T} \frac{1}{\theta_1} \lvert\eta^{\eps}_T-\eta_T\rvert^{\theta_1/2}.    
\end{align}
Due to \autoref{Properties semigroup} and \eqref{uniform bound auxiliary A} we can treat $I_3$ obtaining:
\begin{align}\label{intermediate B step 3}
  I_3&\lesssim_{\eta,\theta_1} \int_0^T \frac{1}{(t-s)^{1-\theta_1/2}} \norm{\operatorname{div}\left(\left(\mathcal{R}_{\eps,\gamma}-\mathcal{R}_{\gamma}\right)\nabla^{\perp}\widehat{A^{3,\eps}_s}\right)}_{\Dot{H}^{-2}} ds \notag\\ & \lesssim \norm{\mathcal{R}_{\eps,\gamma}-\mathcal{R}_{\gamma}}_{HS}\int_0^T \frac{1}{(t-s)^{1-\theta_1/2}} \norm{\widehat{A^{3,\eps}_s}} ds \notag\\ & \lesssim_{T}\frac{1}{\theta_1}\norm{\mathcal{R}_{\eps,\gamma}-\mathcal{R}_{\gamma}}_{HS}.
\end{align}
In particular, combining \eqref{intermediate B step 3} and \autoref{convergence properties matrix} we have that 
\begin{align}\label{intermediate B step 4}
    I_3\lesssim_{\eta,T}\begin{cases}
        \frac{1}{\theta_1 N} & \text{if } \gamma=4,\\
        \frac{1}{\theta_1 N^{\gamma/2-2}} & \text{if } \gamma>4. 
    \end{cases}
\end{align}
Lastly we treat $I_4$ combining \autoref{Properties semigroup} and \eqref{intermediate estimate A}:
\begin{align}\label{intermediate B step 5}
 I_4 & \lesssim_{\eta} \int_0^t \frac{1}{(t-s)^{1+\theta_2/2-\theta_1/2}} \norm{\widehat{A^{3,\eps}_s}-\overline{A^{3,\gamma}_s}}_{\Dot{H}^{-\theta_2}}ds\notag\\ & \lesssim_{T} \frac{1}{\theta_1-\theta_2}\operatorname{sup}_{t\in [0,T]}  \norm{\widehat{A^{3,\eps}_t}-\overline{A^{3,\gamma}_t}}_{\Dot{H}^{-\theta_2}}\notag\\ & \lesssim_{\eta,T} \frac{\lvert \eta^{\eps}_T-\eta_T\rvert^{\theta_2/2}+\norm{\overline{A^3_0}-\mAcomp{0}{3}}_{\Dot{H}^{-\theta_2}}}{\theta_1-\theta_2} .
\end{align}
Combining \eqref{intermediate B step 1},\eqref{intermediate B step 2},\eqref{intermediate B step 4},\eqref{intermediate B step 5} we get the thesis.
\end{proof}
Secondly we provide a quantitative result on the closeness of 
$(\widehat{B^{3,\eps}_t},\widehat{A^{3,\eps}_t})$
and $(\mBcomp{t}{3},\mAcomp{t}{3}).$
\begin{lemma}\label{preliminary convergence 2}
Under the same assumptions of \autoref{main Theorem}, there exists $N_0$ large enough such that for each $N\geq N_0$, for each  $\theta_1\in (0,1],\ \theta_2\in (0,\theta_1),\ \delta\in (0,\theta_2)$ we have
\begin{align}
\operatorname{sup}_{t\in [0,T]}\expt{\norm{\widehat{A^{3,\eps}_t}-\mAcomp{t}{3}}^2_{\Dot{H}^{-\theta_2}}}&\lesssim_{}   \frac{1}{\delta^{\frac{1+\theta_2}{1+\delta}}N^{\frac{2(\theta_2-\delta)}{1+\delta}}}, \label{intermediate estimate 2 A}   \\
\operatorname{sup}_{t\in [0,T]}\expt{\norm{\widehat{B^{3,\eps}_t}-\mBcomp{t}{3}}_{\Dot{H}^{-\theta_1}}^2}&\lesssim_{}   \frac{1}{\theta_2^{2-\theta_1+\theta_2}\delta^{\frac{(\theta_1-\theta_2)(1+\theta_2)}{2(1+\delta)}}N^{\frac{(\theta_2-\delta)(\theta_1-\theta_2)}{(1+\delta)}}},
\label{intermediate estimate 2 B}    
\end{align}    
where the hidden constants depend only on $\eta,\beta,\gamma,C_V,C_{1,H},C_{2,H},\lvert \rho\rvert, T$.
\end{lemma}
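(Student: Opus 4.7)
The plan rests on the observation that, because $\kappa_\eps=\eta+\eta_T^\eps$ by definition, the heat semigroups in the mild formulas for the stochastic objects $(\mBcomp{t}{3},\mAcomp{t}{3})$ of \autoref{mild form} and for the deterministic auxiliary system $(\widehat{B_t^{3,\eps}},\widehat{A_t^{3,\eps}})$ are \emph{identical}. Consequently the differences telescope:
\begin{align*}
\widehat{A^{3,\eps}_t}-\mAcomp{t}{3}=-Z^{2,\eps}_t,\qquad
\widehat{B^{3,\eps}_t}-\mBcomp{t}{3}=-Z^{1,\eps}_t+\int_0^t e^{(t-s)\ke\Delta}\operatorname{div}\!\left(\mathcal{R}_{\eps,\gamma}\nabla^{\perp}\bigl(\widehat{A^{3,\eps}_s}-\mAcomp{s}{3}\bigr)\right)\,ds.
\end{align*}
Thus everything reduces to estimates on $Z^{1,\eps}_t$, $Z^{2,\eps}_t$ (already encoded in \autoref{corollary stochastic convolution}) and on a parabolic convolution whose source is the first difference.

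The estimate \eqref{intermediate estimate 2 A} is then immediate: applying \autoref{corollary stochastic convolution} with $\theta=\theta_2$ gives $\expt{\norm{Z^{2,\eps}_t}_{\Dot H^{-\theta_2}}^2}\lesssim \eps^{-1}\delta^{-(1+\theta_2)/(1+\delta)} N^{-2(\theta_2-\delta)/(1+\delta)}(\norm{B_0^{3,\eps}}_{\eps}^2+\norm{\mAcomp{0}{3}}_{\eps}^2+\norm{\oA{0}}_{\eps}^2)$, and the $\eps^{-1}$ is killed by the hypothesis \eqref{HP passage to the limit}.

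For \eqref{intermediate estimate 2 B}, triangle inequality splits the task. The $Z^{1,\eps}_t$ contribution is again controlled by \autoref{corollary stochastic convolution}. For the convolution term, I would use the semigroup smoothing \autoref{Properties semigroup}(i) with parameter $\varphi=2-\theta_1+\theta'$ (for some auxiliary $\theta'\in(0,\theta_2)$ to be tuned) together with the continuity bound $\norm{\operatorname{div}(\mathcal{R}_{\eps,\gamma}\nabla^{\perp}g)}_{\Dot H^{-2-\theta'}}\lesssim \norm{\mathcal{R}_{\eps,\gamma}}_{HS}\norm{g}_{\Dot H^{-\theta'}}$, which is uniform in $\eps$ by \autoref{convergence properties matrix}. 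This yields an integrand of order $(t-s)^{-1+(\theta_1-\theta')/2}\norm{\widehat{A^{3,\eps}_s}-\mAcomp{s}{3}}_{\Dot H^{-\theta'}}$, integrable in $s$ as long as $\theta'<\theta_1$. Squaring via Cauchy--Schwarz in $s$ and taking expectations reduces the problem to bounding $\sup_s\expt{\norm{\widehat{A^{3,\eps}_s}-\mAcomp{s}{3}}_{\Dot H^{-\theta'}}^2}$.

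The key trick is to produce this intermediate bound by \emph{interpolation}: combining the $L^2$-bound on $\widehat{A^{3,\eps}_s}-\mAcomp{s}{3}$ (which is uniformly $O(1)$, using \autoref{corollary compactness in space} for $\mAcomp{s}{3}$ and classical energy estimates for $\widehat{A^{3,\eps}_s}$) with the $\Dot H^{-\theta_2}$-bound already obtained in \eqref{intermediate estimate 2 A}, one has
\begin{align*}
\expt{\norm{\widehat{A^{3,\eps}_s}-\mAcomp{s}{3}}_{\Dot H^{-\theta'}}^2}\lesssim \bigl(\expt{\norm{\cdot}_{L^2}^2}\bigr)^{1-\theta'/\theta_2}\bigl(\expt{\norm{\cdot}_{\Dot H^{-\theta_2}}^2}\bigr)^{\theta'/\theta_2}.
\end{align*}
Choosing $\theta'=\theta_2(\theta_1-\theta_2)/2$ makes the weight $\lambda=\theta'/\theta_2=(\theta_1-\theta_2)/2$ so that the $N$-exponent becomes exactly $(\theta_2-\delta)(\theta_1-\theta_2)/(1+\delta)$ and the $\delta$-exponent becomes $(\theta_1-\theta_2)(1+\theta_2)/[2(1+\delta)]$, matching the claim. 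The main obstacle is then purely a bookkeeping exercise: tracking the accumulated singularities from the time integral $\int_0^t(t-s)^{-1+(\theta_1-\theta')/2}ds$ and the interpolation constants, which together produce the $\theta_2^{-(2-\theta_1+\theta_2)}$ factor. The hypothesis $\theta_2<\theta_1$ guarantees both $\theta'<\theta_2\leq\theta_1$ and the integrability of the parabolic kernel, so no further compatibility issue arises.
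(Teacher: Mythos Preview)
Your proposal is correct and follows essentially the same route as the paper: telescope the mild formulas so that $\widehat{A^{3,\eps}_t}-\mAcomp{t}{3}=-Z^{2,\eps}_t$ and $\widehat{B^{3,\eps}_t}-\mBcomp{t}{3}=-Z^{1,\eps}_t+\text{convolution}$, dispatch the $Z$-terms via \autoref{corollary stochastic convolution}, and handle the convolution by interpolating between the small $\dot H^{-\theta_2}$ norm of $d^\eps_s=\mAcomp{s}{3}-\widehat{A^{3,\eps}_s}$ and its uniformly bounded $L^2$ norm, with interpolation weight $(\theta_1-\theta_2)/2$.

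The only organizational difference is \emph{where} the interpolation is carried out. You introduce an auxiliary exponent $\theta'=\theta_2(\theta_1-\theta_2)/2$, estimate the convolution in $\dot H^{-\theta_1}$ directly by $\int(t-s)^{-1+(\theta_1-\theta')/2}\|d^\eps_s\|_{\dot H^{-\theta'}}\,ds$, and then interpolate $\|d^\eps_s\|_{\dot H^{-\theta'}}$ between $L^2$ and $\dot H^{-\theta_2}$. The paper instead interpolates the $\dot H^{-\theta_1}$ norm of the whole convolution integral between its $\dot H^{-2-\theta_2}$ norm (controlled by $\int\|d^\eps_s\|_{\dot H^{-\theta_2}}ds$) and its $\dot H^{-\theta_2}$ norm (controlled by $\theta_2^{-1}\sup_s\|d^\eps_s\|_{L^2}$), then applies H\"older in $\omega$. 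Both routes feed on exactly the same two a~priori bounds for $d^\eps_s$ and produce the same $N$- and $\delta$-exponents. One small caveat: your assertion that the time-integral singularity ``produces the $\theta_2^{-(2-\theta_1+\theta_2)}$ factor'' is not quite right---with your choice of $\theta'$ one has $\theta_1-\theta'\ge\theta_1/2$, so the prefactor you actually get is $\lesssim\theta_1^{-2}$ rather than $\theta_2^{-(2-\theta_1+\theta_2)}$. This is a harmless (indeed slightly sharper) variant, not a gap.
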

\begin{proof}
From \autoref{mild form}, we already know that $\mBcomp{t}{3}$ and $\mAcomp{t}{3}$ can be rewritten in mild form as 
\begin{align*}
   \mBcomp{t}{3}=e^{t(\eta+\eta_T^{\eps})\Delta}\mBcomp{0}{3}+Z_{t}^{1,\eps}+\int_0^t e^{(t-s)(\eta+\eta^{\eps}_T)\Delta}\operatorname{div}\left(\mathcal{R}_{\eps,\gamma}\nabla^{\perp}\mAcomp{s}{3}\right) ds,\quad \mAcomp{t}{3}=e^{t(\eta+\eta_T^{\eps})\Delta}\mAcomp{0}{3}+Z_{t}^{2,\eps}.
\end{align*}
Therefore denoting by 
\begin{align*}
    D^{\eps}_t=\mBcomp{t}{3}-\widehat{B^{3,\eps}_t},\quad d^{\eps}_t=\mAcomp{t}{3}-\overline{A^3_t},
\end{align*}
we have
\begin{align*}
    D^{\eps}_t=Z_{t}^{1,\eps}+\int_0^t e^{(t-s)(\eta+\eta^{\eps}_T)\Delta}\operatorname{div}\left(\mathcal{R}_{\eps,\gamma}\nabla^{\perp}d^{\eps}_s\right) ds,\quad d^{\eps}_s=Z_{t}^{2,\eps}.
\end{align*}    
Due to \autoref{corollary stochastic convolution} we have immediately \eqref{intermediate estimate 2 A}. Now we move to the analysis of $D^{\eps}_t$. $Z^{1,\eps}_t$ can be treated easily by \autoref{corollary stochastic convolution} obtaining 
\begin{align}\label{intermediate D step 1}
\operatorname{sup}_{t\in [0,T]}\expt{\norm{Z^{1,\eps}_t}^2_{\Dot{H}^{-\theta_1}}}&\lesssim_{\eta,\beta,\gamma,C_V,C_{1,H},C_{2,H},\lvert \rho\rvert, T}   \frac{1}{\delta^{\frac{1+\theta_1}{1+\delta}}N^{\frac{2(\theta_1-\delta)}{1+\delta}}}.    
\end{align}
In order to treat the deterministic convolution, we first argue by interpolation obtaining thanks to \autoref{Properties semigroup}, \eqref{uniform bound matrix}
\begin{align}\label{intermediate D step 2}
 &\norm{\int_0^t e^{(t-s)(\eta+\eta^{\eps}_T)\Delta}\operatorname{div}\left(\mathcal{R}_{\eps,\gamma}\nabla^{\perp}d^{\eps}_s\right) ds}^2_{\Dot{H}^{-\theta_1}}\notag\\ &\leq \norm{\int_0^t e^{(t-s)(\eta+\eta^{\eps}_T)\Delta}\operatorname{div}\left(\mathcal{R}_{\eps,\gamma}\nabla^{\perp}d^{\eps}_s\right) ds}^{\theta_1-\theta_2}_{\Dot{H}^{-2-\theta_2}}   \norm{\int_0^t e^{(t-s)(\eta+\eta^{\eps}_T)\Delta}\operatorname{div}\left(\mathcal{R}_{\eps,\gamma}\nabla^{\perp}d^{\eps}_s\right) ds}^{2-\theta_1+\theta_2}_{\Dot{H}^{-\theta_2}}\notag\\ &   \lesssim_{\eta} \left(\int_0^t \norm{d^{\eps}_s}_{\Dot{H}^{-\theta_2}}ds \right)^{\theta_1-\theta_2}\left(\int_0^t \frac{1}{(t-s)^{1-\theta_2/2}} \norm{\widehat{A^{3,\eps}_s}-\mAcomp{s}{3}} ds\right)^{2-\theta_1+\theta_2}\notag\\ & \lesssim_{T} \frac{1}{\theta_2^{2-\theta_1+\theta_2}}\left(\int_0^t \norm{d^{\eps}_s}_{\Dot{H}^{-\theta_2}}^2ds \right)^{\frac{\theta_1-\theta_2}{2}}\operatorname{sup}_{t\in [0,T]}\left(\norm{\mAcomp{t}{3}}+\norm{\widehat{A^{3,\eps}_s}}\right)^{2-\theta_1+\theta_2}.
\end{align}
Combining \eqref{intermediate D step 2} and \eqref{intermediate estimate 2 A}, \eqref{corollary compactness in space}, \eqref{uniform bound auxiliary A} we obtain an estimate of the deterministic convolution. Indeed, by Holder inequality, for each $t\in [0,T]$ it holds:
\begin{align}\label{intermediate D step 3}
&\expt{\norm{\int_0^t e^{(t-s)(\eta+\eta^{\eps}_T)\Delta}\operatorname{div}\left(\mathcal{R}_{\eps,\gamma}\nabla^{\perp}d^{\eps}_s\right) ds}^2_{\Dot{H}^{-\theta_1}}} \notag\\ & \lesssim_{\eta,T}\frac{1}{\theta_2^{2-\theta_1+\theta_2}} \expt{\left(\int_0^T \norm{d^{\eps}_s}_{\Dot{H}^{-\theta_2}}^2ds \right)^{\frac{\theta_1-\theta_2}{2}}\operatorname{sup}_{t\in [0,T]}\left(\norm{\mAcomp{t}{3}}+\norm{\widehat{A^{3,\eps}_s}}\right)^{2-\theta_1+\theta_2}}\notag\\ & \leq \frac{1}{\theta_2^{2-\theta_1+\theta_2}} \expt{\int_0^T \norm{d^{\eps}_s}_{\Dot{H}^{-\theta_2}}^2ds }^{\frac{\theta_1-\theta_2}{2}}\expt{\operatorname{sup}_{t\in [0,T]}\left(\norm{\mAcomp{t}{3}}^2+\norm{\widehat{A^{3,\eps}_s}}^2\right)}^{\frac{2-\theta_1+\theta_2}{2}}\notag \\ & \lesssim_{\eta,\beta,\gamma,C_V,C_{1,H},C_{2,H},\lvert \rho\rvert, T}   \frac{1}{\theta_2^{2-\theta_1+\theta_2}\delta^{\frac{(\theta_1-\theta_2)(1+\theta_2)}{2(1+\delta)}}N^{\frac{(\theta_2-\delta)(\theta_1-\theta_2)}{(1+\delta)}}}.
\end{align}
Combining \eqref{intermediate D step 1} and \eqref{intermediate D step 3} the thesis follows.
\end{proof}

Now we are ready to prove \autoref{main Theorem}.
\begin{proof}[Proof of \autoref{main Theorem}]
Recalling that, due to the properties of Riemann sums,
\begin{align*}
    \lvert \eta^{\eps}_T-\eta_T\rvert\lesssim \frac{1}{N},
\end{align*}then the thesis follows immediately by triangle inequality combining \autoref{preliminary convergence} and \autoref{preliminary convergence 2}. 
\end{proof}

\begin{acknowledgements}
    We thank Professor Dejun Luo for useful discussions and valuable insights into the subject.
\end{acknowledgements}
\begin{funding}
    The research of the second author is funded by the European Union (ERC, NoisyFluid, No. 101053472). Views and opinions expressed are however those of the authors only and do not necessarily reflect those of the European Union or the European Research Council. Neither the European Union nor the granting authority can be held responsible for them. 
\end{funding}
\appendix 

\section{Proof of \autoref{lemma ito strat corrector}}\label{appendix ito strat corr}
We start showing that 
\begin{align*}
    \sumkj \mathcal{L}_{\skj}\mathcal{L}_{\smkj} F&=\Lambda^{\eps}F.
\end{align*}
First let us observe that it holds
\begin{align*}
 \sumkj \mathcal{L}_{\skj}\mathcal{L}_{\smkj} F&= \sumkj \operatorname{div}(\skj\otimes \smkj \nabla F)-R\\& =\operatorname{div}\left(Q^{\eps}_0(0) \nabla F\right)-R\\ & = \Lambda^{\eps}F-R,
\end{align*}
where we denoted by 
\begin{align*}
R=\sumkj\skj\cdot\nabla\left(F\cdot\nabla\smkj\right)+\left(\smkj\cdot\nabla F\right)\cdot\nabla\skj-(F\cdot\nabla\smkj)\cdot\nabla\skj.
\end{align*}
Therefore, it is enough to show that 
\begin{align}\label{claim ito strat corrector}
R=0.
\end{align}
First we observe that 
\begin{align*}
   \skj\cdot\nabla\left(F\cdot\nabla\smkj\right)=(\skj\cdot\nabla F)\cdot\nabla\smkj+\skj\cdot \nabla^2 \smkj F. 
\end{align*}
Therefore 
\begin{align}\label{relation 1}
    R&=\left(\sumkj  (\skj\cdot\nabla F)\cdot\nabla\smkj+\left(\smkj\cdot\nabla F\right)\cdot\nabla\skj\right)+\left(\sumkj \skj\cdot \nabla^2 \smkj F-(F\cdot\nabla\smkj)\cdot\nabla\skj \right)\notag\\ & =  R_1+R_2.
\end{align}
$R_1$ is easily $0$ due to \eqref{property derivatives 5}. In order to show that $R_2=0$, we observe that, recalling the definition of $\skj$ and expanding the sum, it holds for each $k\in \Z^{3,N}_0,\ j\in \{1,2\}$
\begin{align}\label{relation 2}
\skj\cdot \nabla^2 \smkj F-(F\cdot\nabla\smkj)\cdot\nabla\skj&= -2\sum_{l,m\in\{1,2,3\}}\left\lvert\tkj\right\rvert^2 a_{k,j} k_l k_m(a_{k,j})_l F_m \notag\\ & =2\skj\cdot \nabla^2 \smkj F\notag\\ & =2\sum_{m\in \{1,2,3\}}\skj\cdot \nabla\partial_m\smkj F_m. 
\end{align}
Let us show that for each $m\in \{1,2,3\}$ it holds $\sumkj\skj\cdot \nabla\partial_m\smkj=0$.
Due to the fact that $\skj$ are divergence free, we have
\begin{align}\label{relation 3}
\sumkj\skj\cdot \nabla\partial_m\smkj&=\operatorname{div}\left( \sumkj \skj\otimes \partial_m \smkj\right)\notag\\ & =
i\operatorname{div}\left( \sumkj \left\lvert\tkj\right\rvert^2 k_m a_{k,j}\otimes a_{k,j}\right)=0.
\end{align}
Our claim \eqref{claim ito strat corrector} then follows combining \eqref{relation 1}, \eqref{relation 2}, \eqref{relation 3}.\\
Now let us prove that
\begin{align*}
    \rho\sumkmeancov \left(\mathcal{L}_{\sk{1}}\mathcal{L}_{\smk{2}}+\mathcal{L}_{\sk{2}}\mathcal{L}_{\smk{1}}\right)F&=\Lambda_{\rho}^{\eps}F.
\end{align*}
First let us observe that it holds
\begin{align*}
    \sumkmeancov \left(\mathcal{L}_{\sk{1}}\mathcal{L}_{\smk{2}}+\mathcal{L}_{\sk{2}}\mathcal{L}_{\smk{1}}\right)F&=\sumkmeancov \operatorname{div}\left((\sk{1}\otimes \smk{2}+\sk{2}\otimes \smk{1}) \nabla F\right)-\Tilde{R}\\ &=\operatorname{div}\left(\overline{Q^{\eps}_{\rho}}(0) \nabla F\right)-\Tilde{R},
\end{align*}
where we denoted by
\begin{align*}
    \Tilde{R}&=\sumkmeancov\sk{1}\cdot\nabla\left(F\cdot\nabla\smk{2}\right)+\left(\smk{1}\cdot\nabla F\right)\cdot\nabla\sk{2}-(F\cdot\nabla\smk{1})\cdot\nabla\sk{2}\\ &+\sumkmeancov\sk{2}\cdot\nabla\left(F\cdot\nabla\smk{1}\right)+\left(\smk{2}\cdot\nabla F\right)\cdot\nabla\sk{1}-(F\cdot\nabla\smk{2})\cdot\nabla\sk{1}.
\end{align*}
Since $\overline{Q^{\eps}_{\rho}}(0)=0$, we are left to show that 
\begin{align}\label{claim off diagonal}
    -\rho\Tilde{R}=\Lambda^{\eps}_{\rho}F.
\end{align}
First we observe that
\begin{align*}
\sk{1}\cdot\nabla\left(F\cdot\nabla\smk{2}\right)+\sk{2}\cdot\nabla\left(F\cdot\nabla\smk{1}\right)&=\left(\sk{1}\cdot\nabla F\right)\cdot\nabla\smk{2}+\sk{1}\cdot\nabla^2 \smk{2} F\\ &+\left(\sk{2}\cdot\nabla F\right)\cdot\nabla\smk{1}+\sk{2}\cdot\nabla^2 \smk{1} F.
\end{align*}
Therefore 
\begin{align}\label{step 1 claim off diagonal}
    \Tilde{R}&=\left(\sumkmeancov \left(\smk{1}\cdot\nabla F\right)\cdot\nabla\sk{2}+\left(\smk{2}\cdot\nabla F\right)\cdot\nabla\sk{1}+\left(\sk{1}\cdot\nabla F\right)\cdot\nabla\smk{2}+\left(\sk{2}\cdot\nabla F\right)\cdot\nabla\smk{1}\right)\notag\\ & + \left(\sumkmeancov \sk{1}\cdot\nabla^2-(F\cdot\nabla\smk{1})\cdot\nabla\sk{2} \smk{2} F+\sk{2}\cdot\nabla^2 \smk{1} F-(F\cdot\nabla\smk{2})\cdot\nabla\sk{1}\right)\notag\\ &= \Tilde{R}_1+\Tilde{R}_2.
\end{align}
$\Tilde{R}_2=0$ arguing as above when showing that $R_2=0.$ We have for each $k$ such that $k_3=0$
\begin{align}\label{step 2 claim off diagonal}
    \left(\smk{1}\cdot\nabla F\right)\cdot\nabla\sk{2}=\left(\sk{1}\cdot\nabla F\right)\cdot\nabla\smk{2},\quad \left(\smk{2}\cdot\nabla F\right)\cdot\nabla\sk{1}=\left(\sk{2}\cdot\nabla F\right)\cdot\nabla\smk{1}
\end{align}
due to our choice of the $\skj$. Therefore 
\begin{align*}
    \Tilde{R}=2\sumkmeancov \left(\sk{1}\cdot\nabla F\right)\cdot\nabla\smk{2}+\left(\sk{2}\cdot\nabla F\right)\cdot\nabla\smk{1}.
\end{align*}
Since, due to \eqref{property derivatives 3}, we have for each $l\in \{1,2,3\}$
\begin{align*}
    \partial_l \overline{Q^{\eps}_{\rho}}(0)=2i\rho \sumkmeancov \left(\tkjcomp{1}{\tkjcomp{2}}^* a_{k,1}\otimes a_{k,2}+\tkjcomp{2}{\tkjcomp{1}}^* a_{k,2}\otimes a_{k,1}\right) k_l,
\end{align*}
it follows that 
\begin{align}\label{step 3 claim off diagonal}
  -\rho \Tilde{R}=-\sum_{l\in \{1,2\}} \partial_l \overline{Q^{\eps}_{\rho}}(0)\cdot\nabla F_l.
\end{align}
Relation \eqref{claim off diagonal} then is a consequence of \eqref{step 1 claim off diagonal}, \eqref{step 2 claim off diagonal}, \eqref{step 3 claim off diagonal}. 
\begin{remark}\label{remark ineffective on 2D functions}
Due to \autoref{lemma ito strat corrector}, recalling the definition of $\mathcal{R}_{\eps,\gamma}$ in \eqref{definition matrix aepsgamma}, it follows that if $F:\T^2\rightarrow  \R^3$ we have
\begin{align*}
    \Lambda_{\rho}^\eps F&=-\sum_{l\in \{1,2\}}(0,0,\partial_l \overline{Q^{\eps,3,H}_{\rho}}(0)\cdot\nabla_H F_l)^t\\ & =2\rho\sum_{l\in \{1,2\}}\sumkmeancov (0,0,\tkjcomp{1}\tkjcomp{2} k_l a_{k,1}\cdot\nabla_H F_l)^t\\ & =2\rho\sum_{l\in \{1,2\}}\sumkmeancov (0,0,\tkjcomp{1}\tkjcomp{2}\operatorname{div}\left(k_l a_{k,1}F_l\right))^t\\ & = -\left(0,0,\operatorname{div}\left(\mathcal{R}_{\eps,\gamma} F_H\right)\right)^t.
\end{align*}
More in general if $F:\T^3_{\eps}\rightarrow \R^3$ we have
\begin{align*}
    \Lambda^{\eps}_{\rho}F&=-\sum_{l\in\{1,2\}}(\partial_l \overline{Q^{\eps,H,3}_{\rho}}(0)\partial_3 F_l,\partial_l \overline{Q^{\eps,3,H}_{\rho}}(0)\cdot\nabla_H F_l)^t\\ & =-\left(\sum_{l\in\{1,2\}}\partial_3(\partial_l \overline{Q^{\eps,H,3}_{\rho}}(0) F_l),\operatorname{div}_H\left(\mathcal{R}_{\eps,\gamma} F_H\right)\right)^t.
\end{align*}
\end{remark}
\section{Some physical remarks}\label{appB}
In this section we discuss some physical properties of our random vector field connected with the appearance of the alpha-term, in particular we show that the helicity of our vector field approach a constant value in the limit, despite the noise is weakly converging to zero.
Let us compute the time average of the helicity of our random velocity field (in the following we will omit the dependence on time and space)
\begin{align*}
    \H^{\eps,\gamma}:= \frac{1}{2T}\expt{W^\eps(T,x)\cdot (\grad \times W^\eps(T, x))}.
\end{align*}
As before we split our noise in mean and fluctuations along the thin direction $W^\eps = \overline{W^\eps}+ {W^\eps}'$, and split the mean part in horizontal and vertical components $\overline{W^\eps}= \overline{W^{\eps,H}} + \overline{W^{\eps,3}}e_3$. Observe that \begin{align*}
    \grad \times \overline{W^\eps} = (-\grad^\perp \overline{W^{\eps,3}}, \grad^\perp \cdot \overline{W^{\eps,H}}),
\end{align*} where with a small abuse of notation we have indicated with $\overline{W^{\eps,3}}$ the scalar quantity $\overline{W^\eps}\cdot e_3$
\begin{align*}
    \H^{\eps,\gamma} = \frac{1}{2T}\expt{ -\overline{W^{\eps,H}} \cdot \grad^\perp \overline{W^{\eps,3}} + \overline{W^{\eps,3}}\grad^\perp \cdot \overline{W^{\eps,H}} + {W^\eps}'\cdot (\grad \times \overline{W^\eps}) + \overline{W^\eps}\cdot (\grad \times {W^\eps}') + {W^\eps}'\cdot (\grad \times {W^\eps}')}.
\end{align*}
Notice that the mixed terms are zero by the independence of the Brownian motions, thus we are left with the first two terms, and the last one. We start by the first two, we have, by our choices of the noise coefficients,  
\begin{align*}
    -\overline{W^{\eps,H}} \cdot \grad^\perp \overline{W^{\eps,3}} &= -\sum_{\substack{k, k'\in \Z^2_0,\\ N\le |k|,|k'|\le 2N}} \theta^{\eps}_{k,1}\theta^{\eps}_{k',2} a_k^1\cdot (i{k'}^\perp)e^{i(k+k')\cdot x} W^{k,1}W^{k', 2}\\
    &= \frac{1}{C_{1,H}C_{2,H}}\sum_{\substack{k, k'\in \Z^2_0,\\ N\le |k|,|k'|\le 2N}}  \frac{k^\perp\cdot (k')^\perp}{|k|^{2}|k'|^{\gamma/2}} e^{i(k+k')\cdot x}W^{k,1}W^{k', 2}.
\end{align*}
Taking the expected value and remembering that $\expt{W_T^{k,1}W_T^{k',2}} = 2T\rho \delta_{k+k'} $ we get 
\begin{equation*}
    \expt{-\overline{W^{\eps,H}} \cdot \grad^\perp \overline{W^{\eps,3}}} = -\frac{2\rho T}{C_{1,H}C_{2,H}} \sum_{\substack{k\in \Z^2_0 \\ N\le |k|\le 2N}} \frac{1}{|k|^{\gamma/2}}.
\end{equation*}
Analogously, we have 
\begin{align*}
    \overline{W^{\eps,3}} (\grad^\perp \cdot\overline{W^{\eps, H}}) &= -\frac{1}{C_{1,H}C_{2,H}}\sum_{\substack{k, k'\in \Z^2_0,\\ N\le |k|,|k'|\le 2N}}  \frac{1}{|k|^{\gamma/2}} e^{i(k+k')\cdot x}W^{k,2}W^{k', 1}. 
\end{align*}
From which 
\begin{align*}
    \expt{ \overline{W^{\eps,3}} (\grad^\perp \cdot\overline{W^{\eps, H}})}= - \frac{2\rho T}{C_{1,H}C_{2,H}}\sum_{\substack{k\in \Z^2_0 \\ N\le |k|\le 2N}} \frac{1}{|k|^{\gamma/2}}.
\end{align*}
Finally for the last term 
\begin{align*}
    {W^\eps}'\cdot (\grad \times {W^\eps}')= \sum_{\substack{k, k'\in \Z^{3, \eps}_0,\\ k_3, k'_3\neq 0 \\ N\le |k_H|,|k_H'|\le 2N\\ i,j\in\{1,2\}}}\theta^{\eps}_{i,k}\theta^{\eps}_{k,k'} a_k^i\cdot (ik'\times a_{k'}^j) e^{i(k+k')\cdot x}W^{k,i}W^{k', j}.
\end{align*}
Taking expectation, we see that only the terms with $k'=-k$ and $i=j$ survive but then, $a_k^i\cdot (k\times a_{-k}^i)= a_k^i\cdot (k\times a_{k}^i)= 0$. 
In the end $\H^{\eps,\gamma}$ does not vanish in the limit if and only if $\gamma=4$. Indeed, we have
\begin{align}\label{elicity convergence}
    \H^{\eps,\gamma}=-\frac{2\rho\zeta^{N}_{H,\gamma/2}}{N^{\gamma/2-2}C_{1,H}C_{2,H}}\rightarrow \H^{\gamma}=\begin{cases}
        -\frac{4\pi\rho\log 2}{C_{1,H}C_{2,H}} &\text{if } \gamma=4\\
        0 &\text{if } \gamma>4.
    \end{cases}
\end{align}
Due to \eqref{elicity convergence} it follows that
\begin{align}\label{relation alpha effect elicity}
\mathcal{R}_{\eps,\gamma}=-\frac{\H^{\eps,\gamma}}{2}\begin{bmatrix} 
   0 & -1\\ 1 & 0   \end{bmatrix},\ 
\mathcal{R}_{\gamma}=-\frac{\H^{\gamma}}{2}\begin{bmatrix} 
   0 & -1\\ 1 & 0   \end{bmatrix}.
\end{align}
Finally, due to \eqref{relation alpha effect elicity}, denoting by 
\begin{align*}
 \overline{B^{\gamma}_t}=(-\nabla^{\perp} \overline{A^{3,\gamma}_t}, \overline{B^{3,\gamma}_t})^t, \quad  \mathcal{A}^{\gamma}=-\frac{\mathcal{H}^{\gamma}}{2}\begin{bmatrix} 
   1& 0 & 0\\ 0 &1 & 0\\ 0 & 0 & 0   \end{bmatrix},   
\end{align*}
 our limit equations \eqref{limit solution A3}, \eqref{limit solution B3} read as
\begin{align}\label{limit system vector form}
    \begin{cases}
        \partial_t \overline{B^{\gamma}_t} &=(\eta+\eta_T)\Delta \overline{B^{\gamma}_t}+\nabla\times (\mathcal{A}^{\gamma}\overline{B^{\gamma}_t})\quad x\in \T^2,\ t\in (0,T)\\ 
        \operatorname{div}\overline{B^{\gamma}_t}& =0\\
        \overline{B^{\gamma}_t}|_{t=0}&= \overline{B}_0.
    \end{cases}
\end{align}
\bibliography{biblio}{}
\bibliographystyle{plain}

\end{document}